\theoremstyle{plain}
\newtheorem{theorem}{Theorem}[section]
\newtheorem{thm}[theorem]{Theorem}
\newtheorem{cor}[theorem]{Corollary}
\newtheorem{prop}[theorem]{Proposition}
\newtheorem{lem}[theorem]{Lemma}
\theoremstyle{definition}
\newtheorem{example}[theorem]{Example}
\newtheorem{defn}[theorem]{Definition}
\newcommand{\bC}{{\mathbb{C}}}
\newcommand{\bF}{{\mathbb{F}}}
\newcommand{\bN}{{\mathbb{N}}}
\newcommand{\bT}{{\mathbb{T}}}
  \newcommand{\A}{{\mathcal{A}}}
  \newcommand{\B}{{\mathcal{B}}}
\renewcommand{\H}{{\mathcal{H}}}
  \newcommand{\J}{{\mathcal{J}}}
\renewcommand{\L}{{\mathcal{L}}}
\newcommand{\M}{{\mathcal{M}}}
  \newcommand{\N}{{\mathcal{N}}}
\renewcommand{\O}{{\mathcal{O}}}
\renewcommand{\P}{{\mathcal{P}}}
\renewcommand{\S}{{\mathcal{S}}}
  \newcommand{\U}{{\mathcal{U}}}
  \newcommand{\V}{{\mathcal{V}}}
  \newcommand{\W}{{\mathcal{W}}}
\newcommand{\fA}{{\mathfrak{A}}}
\newcommand{\fI}{{\mathfrak{I}}}
\newcommand{\fJ}{{\mathfrak{J}}}
\newcommand{\fM}{{\mathfrak{M}}}
\newcommand{\fS}{{\mathfrak{S}}}
\newcommand{\fT}{{\mathfrak{T}}}
\renewcommand{\phi}{\varphi}
\newcommand{\upchi}{{\raise.35ex\hbox{\ensuremath{\chi}}}}
\newcommand{\AND}{\text{ and }}
\newcommand{\alg}{\operatorname{alg}}
\newcommand{\id}{{\operatorname{id}}}
\newcommand{\spn}{\operatorname{span}}
\newcommand{\ca}{\mathrm{C}^*}
\newcommand{\Fn}{\mathbb{F}_n^+}
\newcommand{\Fth}{\mathbb{F}_\theta^+}
\newcommand{\<}{\langle}
\renewcommand{\>}{\rangle}
\newcommand{\mt}{\varnothing}
\newcommand{\ol}{\overline}
\newcommand{\wot}{\textsc{wot}}
\newcommand{\gr}{\bF_\theta^+}
\begin{document}
%%%%%%%%%%%%%%%%%%%%%%%%%%%%%%%%%%%%%%
\title%[The Structure of $2$-graph Algebras]
{Nonself-adjoint $2$-graph algebras}

%    author one information
\author[A. H. Fuller]{Adam H. Fuller}
\address{Adam H. Fuller, Department of Mathematics, University of Nebraska-Lincoln, Lincoln, NE, 68588-0130, USA}
%\curraddr{}
\email{afuller7@math.unl.edu}
%\thanks{}

%    author two information
\author[D. Yang]
{Dilian Yang}
\address{Dilian Yang,
Department of Mathematics $\&$ Statistics, University of Windsor, Windsor, ON
N9B 3P4, CANADA} \email{dyang@uwindsor.ca}

\begin{abstract}
We study the structure of weakly-closed nonself-adjoint algebras arising from representations of single vertex $2$-graphs. These are the algebras generated by $2$ isometric tuples which satisfy a certain commutation relation. We show that these algebras have a lower-triangular $3\times 3$ form. The left-hand side of this matrix decomposition is a slice of the enveloping von Neumann algebra generated by the $2$-graph algebra. We further give necessary and sufficient conditions for these algebras themselves to be von Neumann algebras. The paper concludes with further study of atomic representations.
\end{abstract}

\subjclass[2000]{47L55, 47L30, 47L75, 46L05.}
\keywords{Nonself-adjoint 2-graph algebra, free semigroup algebra, structure projection, $3\times 3$ matrix form, analytic.}
\thanks{The second author partially supported by an NSERC Discovery grant.}

\date{}
\maketitle

%%%%%%%%%%%%%%%%%%INTRODUCTION %%%%%%%%%%%%%%%%%%%%%%%%%%%%%%%%%%%%%
\section{Introduction}
\noindent Higher-rank graph $C^*$-algebras have been the subject of much research since their introduction by Kumjian and Pask \cite{KumPas}. These algebras serve as a higher-rank version of graph $C^*$-algebras. Their theory has been developed by Kumjian, Pask, Raeburn, Sims, to name but a few. See \cite{FMY, KPS, PRRS, RSY} and references therein for more on higher-rank graph $C^*$-algebras.

The nonself-adjoint counter-parts of higher-rank graph $C^*$-algebras were initially studied by Kribs and Power \cite{KriPow06}. Kribs and Power had previously studied the nonself-adjoint analytic algebras arising from the left-regular representation of a directed graph \cite{KriPow04}. This work serves as a natural generalisation of the study of the noncommutative Toeplitz algebra $\L_n$, initiated by Davidson and Pitts \cite{DP}. From this point of view, the nonself-adjoint $k$-graph algebras studied by Kribs and Power are a further generalisation of the noncommutative Toeplitz algebra $\L_n$. The study of the left-regular representation of a $k$-graph, with special attention payed to the case of single vertex $k$-graphs, was further developed by Power \cite{Power}. The representation theory of single vertex $k$-graphs was developed in a series of papers by Davidson, Power and the second author \cite{DPYatom, DPYdiln, DY1, DY2}. The nonself-adjoint algebras arising from finitely correlated representations of $k$-graphs have been dealt with by the first author as a special case of product systems of $C^*$-correspondences \cite{Ful}.
The classification of von Neumann algebras associated to single vertex 2-graphs was studied by the second author 
in \cite{Yang1, Yang2}.

Free semigroup algebras are the unital \textsc{wot}-closed algebras generated by row-isometries. The noncommutative Toeplitz algebra is an example of a free semigroup algebra. A row-isometry is an isometric operator from the Hilbert space $\H\oplus\cdots\oplus\H$ ($n$ times) to $\H$. A row-isometry is thus determined by $n$ isometries $S_1,\ldots,S_n$ on $\H$ with pairwise orthogonal ranges. They are the natural higher-dimensional generalisation of isometries and appear throughout mathematics and mathematical physics. Free semigroup algebras are the algebras which best encapsulate the representation theory of row-isometries (see \cite{Ken2}). Isometric representations of single vertex $k$-graph algebras are determined by $k$ row-isometries 
$[S^{(i)}_1,\ldots,S^{(i)}_{n_i}]$ ($1\leq i\leq k$) satisfying certain commutation rules. Thus, the isometric representation theory of single vertex $k$-graphs is a higher-dimensional analogue of the study of isometries.

In this paper we study the nonself-adjoint algebras arising from single vertex $2$-graphs further. Mirroring the case of a single row-isometry, i.e. the free semigroup algebra case,  we pay particular attention to the \textsc{wot}-closed algebras. In Section 2 relevant background definitions and results in $2$-graph algebras and free semigroup algebras are discussed.

In Section \ref{sec: analytic} we study when the norm-closed and \textsc{wot}-closed algebras arising from our representations are, in some sense, comparable to those arising from the left-regular representation. A result of Popescu \cite{Pop} says that the norm-closed unital algebra generated by a single row-isometry is completely isometrically isomorphic to the norm-closed unital algebra generated by the left-regular representation of $\bF_n^+$, the noncommutative disc algebra. The analogous result does not hold in general for isometric representations of $2$-graphs. We say that a representation is rigid when this property holds. In Theorem \ref{thm: rigidity} we give sufficient conditions for rigidity.

One of these conditions is on the $2$-graph itself, not on a representation. This is the condition of aperiodicity. Aperiodicity in graphs was studied in detail by Davidson and the second author \cite{DY1}. Let $S$ and $T$ be two row-isometries determining a representation of a $2$-graph. From the perspective of Theorem \ref{thm: rigidity}, assuming that the $2$-graph is aperiodic ensures some independence between $S$ and $T$. In particular we can not have $S=T$.

If $\gr$ is a $2$-graph, denote by $\L_\theta$ the unital \textsc{wot}-closed algebra generated by the left-regular representation of $\gr$. Let $\fS$ be the unital \textsc{wot}-closed algebra generated by an isometric representation of $\gr$ determined by row-isometries $S$ and $T$. We say that $\fS$ is analytic if the canonical map from $\fS$ to $\L_\theta$ sending generators to generators is a weak$^*$-weak$^*$ homeomorphism. Assuming that $\fS$ is generated by a rigid representation allows us to give a more practical description of when $\fS$ is analytic. For example, in Lemma \ref{lem: wandering} we can show that if $\fS$ is generated by a rigid representation and acts on a space spanned by its wandering vectors, then $\fS$ is analytic.

The main result of section 4 is Theorem \ref{thm: struct thm} in which we show the \textsc{wot}-closed algebra generated by a representation of a single vertex $2$-graph has a 
lower-triangular $2\times 2$ matrix form. Let $\fS$ be such an algebra, and let $\fM$ be  the von Neumann algebra generated by $\fS$. We show that there is a projection $P$ in $\fS$ such that $\fS P=\fM P$ and $P^{\perp}\H$ is invariant under $\fS$. Thus the projection $P$ determines the lower-triangular form for $\fS$. This extends results of the first author \cite{Ful} for the case when $P$ projects onto a finite dimensional space. A similar result for free semigroup algebras, the Structure Theorem for free semigroup algebras, was proved by Davidson, Katsoulis and Pitts \cite{DKP}. Our proof relies on the fact that there are many free semigroup algebras sitting inside $\fS$. The lower-triangular form of these free semigroup algebras forces the algebra $\fS$ to have a similar decomposition. We call the projection $P$ the first structure projection for the nonself-adjoint $2$-graph algebra $\fS$. We study $P$ further in section \ref{sec: 1st proj}. 

In general, it is not known how the $(2,2)$-entry of the lower-triangular form established for a nonself-adjoint $2$-graph algebra $\fS$ behaves. In section \ref{sec: 2nd proj} we show that the largest invariant subspace on which $\fS$ is analytic lies in the range of the $(2,2)$-entry. This determines a lower-triangular $3\times 3$ decomposition of the algebra with the $(3,3)$-entry being analytic.

In section 5 we consider the case when the \textsc{wot}-closed nonself-adjoint algebra $\fS$ generated by a representation of a $2$-graph is a von Neumann algebra. Several sufficient conditions for this to happen are given. Examples of such algebras rely on starting with a free semigroup algebra which is self-adjoint. The only known example of this is due to Read \cite{Read}.

In the final section we consider the special case of atomic representations. Atomic representations were classified by Davidson, Power and the second author \cite{DPYatom}, where they show that atomic representations break into $3$ different classes, each with various subclasses. In this section we give sufficient conditions for these representations to have wandering vectors. The existence of wandering vectors guarantees that the $(3,3)$-entry in the lower-triangular form of the nonself-adjoint $2$-graph algebra, as established in section \ref{sec: 2nd proj}, is non-zero. Here again, aperiodicity will play a role. We show that there are atomic representations of periodic $2$-graphs with no wandering vectors.

%%%%%%%%%%%%%%%%%%PRELIMINARIES AND NOTATION %%%%%%%%%%%%%%%%%%%%%%%%%%%%%%%%%%%%%
\section{Preliminaries and Notation}
\subsection{\texorpdfstring{Nonself-adjoint $2$-graph algebras}{Nonself-adjoint 2-graph algebras}}
\noindent Higher-rank graph algebras were introduced in 2000 by Kumjian and Pask \cite{KumPas}. Their definition relies on small categories. In this paper we are only concerned with single vertex $2$-graphs so we are afforded a simpler definition.

\begin{defn}\label{defn: 2-graph}
	Let $m$ and $n$ be positive integers and $\theta$ be a permutation in $S_{m\times n}$. We define
	the \emph{single vertex $2$-graph} $\gr$ to be the cancellative semigroup $\gr$ generated by the
	$m+n$ elements $e_1,\ldots,e_m$ and $f_1,\ldots,f_n$ satisfying
	\begin{enumerate}
		\item $e_1,\ldots,e_m$ form a copy of the free semigroup $\bF_m^+$,
		\item $f_1,\ldots,f_n$ form a copy of the free semigroup $\bF_n^+$,
		\item $e_if_j=f_{j'}e_{i'}$ when $\theta(i,j)=(i',j')$.
	\end{enumerate} 
\end{defn}
For a word $u=i_1i_2\ldots i_k\in\bF_m^+$ we write $e_u$ in place of $e_{i_1}e_{i_2}\ldots e_{i_k}$. Similarly we write $f_v$ for words $v\in\bF_n^+$. Take any $w\in\gr$. The commutation rules on $\gr$ mean that $w$ can be written uniquely as $w=e_{u}f_{v}$ for some $u\in\bF_m^+$ and $v\in\bF_n^+$. We define the \emph{degree} of $w$, written $d(w)$, by $d(w)=(|u|,|v|)$ where $|u|$ is the length of $u$ and $|v|$ is the length of $v$. We define the \emph{length} of $w$, written $|w|$, by $|w|=|u|+|v|$.

The above definition of a single vertex $2$-graph may seem far removed from what initially springs to mind on hearing the word ``graph''. We can describe $\gr$ alternatively in a way that, though not as convenient for computations, betrays the origins of the definition. Consider a single vertex $v$ with $m$ blue directed edges from $v$ to $v$ and $n$ red directed edges from $v$ to $v$. Label the blue edges $e_1$ to $e_m$ and label the red edges $f_1$ to $f_n$. Now we equate red-blue edges with blue-red edges using $\theta$ to determine the pairings. That is, if we travel by red path $f_j$ and then blue path $e_i$ we will consider this path the same as traveling by $e_{i'}$ and then $f_{j'}$, when $\theta(i,j)=(i',j')$. Then $\gr$ is simply the path space of this $2$ coloured graph. This description is useful to keep in mind, but we will rely more heavily on the description in Definition \ref{defn: 2-graph}.

The concept of aperiodicity for higher-rank graphs was introduced by Kumjian and Pask \cite{KumPas}. For single vertex $2$-graphs this idea was explored further by Davidson and the second author \cite{DY1}. Periodicity refers to, essentially, a necessary repetition in infinite paths of alternating $e$'s and $f$'s, i.e. an infinite path of alternating blue and red edges. We note the following characterisation found in \cite[Theorem 3.1]{DY1}, and refer the reader to \cite{DY1} for further information.

\begin{thm}\label{thm: periodic cond}
	If $2\leq m,n$ then $\gr$ is \emph{periodic} with period $(a,-b)$ if and only if there is a bijection
	\[
		\gamma:\{u\in\bF_m^+: |u|=a\}\rightarrow\{v\in\bF_n^+: |v|=b\}
	\]
	such that 
	\[
		e_uf_v=f_{\gamma(u)}e_{\gamma^{-1}(v)}.
	\]

	If $m=1$ or $n=1$ then $\gr$ is periodic.
\end{thm}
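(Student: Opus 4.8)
The plan is to prove the biconditional characterization of periodicity with period $(a,-b)$ for single vertex $2$-graphs with $2\leq m,n$, and then handle the degenerate cases $m=1$ or $n=1$ separately. I will first clarify what ``periodic with period $(a,-b)$'' should mean: intuitively, that every sufficiently long infinite path of alternating colours exhibits a forced repetition after shifting by $a$ blue edges in one direction and $b$ red edges in the other. The natural bridge between this dynamical/combinatorial notion and the algebraic identity $e_uf_v=f_{\gamma(u)}e_{\gamma^{-1}(v)}$ is the factorization property of the $2$-graph: by Definition \ref{defn: 2-graph}, the commutation rule $e_if_j=f_{j'}e_{i'}$ induced by $\theta$ extends, by repeated application, to a bijective correspondence between words of the form $e_uf_v$ and words of the form $f_{v'}e_{u'}$ whenever $|u|=|u'|$ and $|v|=|v'|$. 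I would begin by making this extended factorization explicit, since it underpins everything.

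For the forward direction, I would assume $\gr$ is periodic with period $(a,-b)$ and extract the claimed bijection $\gamma$. The idea is that periodicity forces, for each blue word $u$ of length $a$, a canonically associated red word of length $b$ such that the two colour-ordered paths agree after the periodicity shift; one then reads off $\gamma(u)$ from the factorization $e_uf_v=f_{\gamma(u)}e_{?}$ and verifies, using the uniqueness of the normal form $w=e_{u'}f_{v'}$ guaranteed in the discussion after Definition \ref{defn: 2-graph}, that the second factor is itself a blue word of length $a$ depending bijectively on $v$, which I would identify with $\gamma^{-1}(v)$. For the reverse direction, given such a bijection $\gamma$, I would show the identity $e_uf_v=f_{\gamma(u)}e_{\gamma^{-1}(v)}$ propagates to arbitrarily long alternating paths by iterating the relation along an infinite path, thereby producing the repetition that defines periodicity with the prescribed period.

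The main obstacle will be the forward direction: converting the ``essential repetition in infinite paths'' formulation of periodicity into a well-defined function $\gamma$ on finite words and then proving it is a \emph{bijection} rather than merely a map. The injectivity and surjectivity will hinge on the cancellative property of $\gr$ together with uniqueness of the $e_uf_v$ normal form, and I expect the delicate point to be checking that $\gamma$ does not depend on the choice of tail of the infinite path used to define it, i.e. that the shift behaves consistently. Because this is precisely the content of \cite[Theorem 3.1]{DY1}, I would lean on the combinatorial machinery developed there rather than reconstruct it, citing the factorization and shift-invariance lemmas as needed.

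Finally, for the boundary cases $m=1$ or $n=1$, I would observe that the permutation $\theta\in S_{m\times n}$ then acts on a set where one of the two coordinates is forced (there is only a single blue edge or a single red edge), so the commutation relations degenerate and an alternating infinite path is automatically forced into a repetition; hence $\gr$ is periodic unconditionally. This case does not require producing a nontrivial bijection and follows by directly exhibiting the repetition, completing the proof.
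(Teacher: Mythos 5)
The paper offers no proof of this statement: it is quoted as known background, citing \cite[Theorem 3.1]{DY1} and explicitly referring the reader to that paper for details. Your proposal ultimately defers its one genuinely hard step --- extracting a well-defined bijection $\gamma$ from path-space periodicity in the forward direction --- to the combinatorial machinery of that same reference, so it takes essentially the same route as the paper (reliance on \cite{DY1}), and your surrounding sketch of the converse direction and of the degenerate $m=1$ or $n=1$ case is consistent with that source.
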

While many of our results will hold for both periodic and aperiodic $2$-graphs, we will frequently assume that we are dealing with aperiodic $2$-graphs in order to obtain stronger results.

\begin{defn}
Let $\gr$ be a $2$-graph. Let $S_1,\ldots,S_m$ and $T_1,\ldots,T_n$ be two sets of operators on a Hilbert space $\H$ which satisfy
	\[
		S_iT_j=T_{j'}S_{i'}
	\]
when $\theta(i,j)=(i',j')$. Let $S=[S_1,\ldots,S_m]$ be the row-operator from $\H^{(m)}$ to $\H$ and let
$T=[T_1,\ldots,T_n]$ be the row-operator from $\H^{(n)}$ to $\H$. We say that the pair $(S,T)$ is
\begin{enumerate}
	\item a \emph{contractive representation} of $\gr$ if both $S$ and $T$ are contractions,
	\item an \emph{isometric representation} of $\gr$ if both $S$ and $T$ are isometries,
	\item of \emph{Cuntz-type} if both $S$ and $T$ are unitaries.
\end{enumerate}
Equivalently, the pair $(S,T)$ is a contractive representation if
	\[
		\sum_{i=1}^mS_iS_i^*\leq 1\AND\sum_{j=1}^nT_jT_j^*\leq 1.
	\]
The pair $(S,T)$ is an isometric representation precisely when both $S$ and $T$ are isometric tuples, i.e. the $S_i$ and $T_j$ satisfy the Cuntz relations
	\[
		S_i^*S_j=\delta_{i,j}I \AND T_k^*T_l=\delta_{k,l}I.
	\]
An isometric representation $(S,T)$ is of Cuntz-type when both $S$ and $T$ are defect-free, i.e.
	\[
		\sum_{i=1}^mS_iS_i^*=I=\sum_{j=1}^nT_jT_j^*.
	\]
For $w=e_uf_v\in\gr$ we will write $(ST)_w$ for the operator $S_{u}T_{v}$.
\end{defn}

\begin{defn}
	Let $(S,T)$ be an isometric representation of a $2$-graph $\gr$. The
	\emph{nonself-adjoint $2$-graph algebra} $\fS$ generated by $(S,T)$ is the unital, weakly-closed algebra
	generated by the tuples $S$ and $T$. That is,
	\[
		\fS=\overline{\alg}^{\textsc{wot}}\{I,S_1,\ldots,S_m,T_1,\ldots,T_n\}.
	\]
\end{defn}

We are concerned in this paper with noncommutative operator algebras. Thus we will assume throughout this work that for a $2$-graph $\gr$ with $\theta\in S_{m\times n}$ 
either $m>1$ or $n>1$. Note that if  $m=n=1$ and $(S,T)$ is an isometric representation of $\gr$ then $S$ and $T$ are commuting isometries (not row-isometries) and so the nonself-adjoint $2$-graph algebra they generate is commutative.

While we are primarily interested in nonself-adjoint $2$-graph algebras generated by representations of Cuntz-type,
the following example, which is not of Cuntz-type, is motivating.

\begin{example}
	Let $\gr$ be a $2$-graph where $\theta\in S_{m\times n}$.
	Let $\H_\theta=\ell^2(\gr)$ be the separable Hilbert space with orthonormal basis $\{\xi_w: w\in\gr\}$.
	Define operators $E_i$ and $F_j$ for $1\leq i\leq m$, $1\leq j\leq n$ by
	\[
		E_i\xi_w=\xi_{e_iw}\AND F_j\xi_w=\xi_{f_jw}\quad (w\in\Fth).
	\]
	Let $E=[E_1,\ldots,E_m]$ and $F=[F_1,\ldots,F_n]$. Then $(E,F)$ is an isometric representation of
	$\gr$. This is called the \emph{left-regular representation} of $\gr$. We denote by $\L_\theta$ the
	nonself-adjoint $2$-graph algebra generated by $(E,F)$. We further denote by $\A_\theta$ the 
	norm-closed unital algebra generated by the representation $(E,F)$.
\end{example}

%%%%%%%%%%%%%%%%%%%%%%%%%%%% FREE SEMIGROUP ALGEBRAS %%%%%%%%%%%%%%%%%%%%%%%%%%%%%%%%
\subsection{Free semigroup algebras}
Nonself-adjoint $2$-graph algebras are the natural $2$ variable generalisation of free semigroup algebras. They also contain free semigroup algebras which are intrinsic to their structure. We will thus review some free semigroup algebra theory here. For the current state of the art on free semigroup algebras we refer the reader to the survey article \cite{Dav01} and to the more recent articles \cite{Ken, Ken2}. Throughout this article we will in particular rely on the structure of free semigroup algebras as described in \cite{DKP}.

\begin{defn}
	A \textit{free semigroup algebra} is a unital, weakly-closed algebra generated by a row-isometry. That is,
	$\fS$ is a free semigroup algebra if 
	\[
		\fT=\overline{\alg}^\textsc{wot}\{I,T_1,\ldots,T_n\},
	\]
	where $T_1,\ldots,T_n$ are isometries satisfying the Cuntz relations $T_i^*T_j=\delta_{i,j}I$.
\end{defn}
The name \emph{free semigroup algebra} arises from the fact that any row-isometry defines a representation of a free semigroup.

As in the case of nonself-adjoint $2$-graph algebras, the left-regular representation is an important example in the theory of free semigroup algebras.

\begin{example}[Left regular representation]\label{eg: fsg left-reg}
	Let $\H_n=\ell^2(\bF_n^+)$ be the Hilbert space with orthonormal basis $\{\xi_w: w\in\bF_n^+\}$. We let
	$L_1,\ldots,L_n$ be the left-regular representation operators on $\H_n$, i.e.
	\[
		L_i\xi_w=\xi_{iw}\quad (1\le i \le n,\,  w\in \Fn).
	\]
	We denote by $\L_n$ the free semigroup algebra generated by $L_1,\ldots,L_n$. This algebra is called the noncommutative Toeplitz algebra. It is the
	natural noncommutative analogue of $H^\infty$. We denote by $\A_n$ the norm-closed, unital algebra
	generated by $L_1,\ldots,L_n$. This algebra, called the \emph{noncommutative disc algebra}, was 
	introduced by Popescu \cite{Pop3}.
\end{example}

An isometry can be uniquely separated into the direct sum of a unilateral shift and a unitary. This is called the Wold decomposition of an isometry. A unitary can be further broken down into the direct sum of singular unitary and an absolutely continuous unitary, see e.g. \cite{Nagy}. Similarly a row-isometry has a Wold-type decomposition due to Popescu. A row-isometry can be written as the direct sum of an ampliation of the left-regular representation (the shift part) and a Cuntz-type row-isometry (the unitary part) \cite{Pop2}. Recently, Kennedy has shown that, like a single isometry, a row-isometry of Cuntz-type can be broken down further \cite{Ken2}. The decomposition is however into three parts: an absolutely continuous part, a singular part and a part of dilation-type. This is known as the \emph{Wold-von Neumann-Lebesgue decomposition} of a row-isometry. We define these terms here:
\begin{enumerate}
	\item a row-isometry is \emph{absolutely continuous} or \emph{analytic} if the free semigroup algebra it generates is isomorphic to $\L_n$,
	\item a row-isometry is \emph{singular} if the free semigroup algebra it generates is self-adjoint,
	\item a row-isometry is of \emph{dilation-type} if it has no absolutely continuous or singular
	direct summands.
\end{enumerate}
Some remarks on these terms are required. Firstly, that a row-isometry can be singular is not obvious. However, Read \cite{Read}, see also \cite{Dav06}, has shown the existence of a row-isometry which generates $\B(\H)$ as a free semigroup algebra.

Secondly, the term dilation-type can seem a bit obtuse. However it can be shown that a row-isometry of dilation type is necessarily the minimal isometric dilation of a defect-free row-contraction \cite{DLP, Ken2}. This means that if $S=[S_1,\ldots,S_n]$ is a row-isometry of dilation type on a Hilbert space $\H$ then there is a subspace $\V$ in $\H$ such that
  \begin{enumerate}
   \item $S_i^*\V\subseteq\V$ for each $i=1,\ldots,n$,
   \item $\H=\bigvee_{w\in\bF_n^+}S_w\V$.
  \end{enumerate}
When $\V$ is finite-dimensional then we say that $S$ is \emph{finitely correlated}. For more on isometric dilations of row-operators see \cite{DKS}. 

\subsection{\texorpdfstring{Free semigroup algebras in nonself-adjoint $2$-graph algebras}{Free semigroup algebras in nonself-adjoint 2-graph algebras}}
Let $\fS$ be a nonself-adjoint $2$-graph algebra generated by an isometric representation $(S,T)$ of a $2$-graph $\gr$, where $\theta\in S_{m\times n}$ with either $m>1$ or $n>1$. Note that for each $k,l\ge 0$ the set of operators $\{(ST)_w: d(w)=(k,l)\}$ are a family of isometries satisfying the Cuntz relations. We write 
$[ST]_{k,l}$ for this row-isometry and $\fS_{k,l}$ for the free semigroup algebra it generates. Note that the family of free semigroup algebras $\{\fS_{k,l}\}_{k,l\ge 0}$ span a dense subset of the nonself-adjoint $2$-graph algebra $\fS$.

%%%%%%%%%%%%%%%%%% ANALYTIC 2-GRAPH ALGEBRAS %%%%%%%%%%%%%%%%%%%%%%%%%%%%%%%%%%%%%
\section{Analyticity and Rigidity}\label{sec: analytic}
\noindent Kribs and Power have shown that the nonself-adjoint $2$-graph algebra $\L_\theta$ is the natural higher-rank noncommutative analogy of $H^\infty$ and the free semigroup algebra $\L_n$ \cite{KriPow06}. In this section we consider $2$-graph algebras which are weak$^*$-weak$^*$ homeomorphic to $\L_\theta$. We call these algebras analytic. For a large class of $2$-graph algebras, those arising from what we call rigid representations, we will give a more algebraic definition of what it means for a $2$-graph algebra to be analytic.

\begin{defn}
	Let $(S,T)$ be an isometric representation of the $2$-graph $\gr$ and let $\fS$
	be the corresponding nonself-adjoint $2$-graph algebra. We say that $\fS$ is \textit{analytic} if it is completely isomorphic and weak$^*$ to weak$^*$ homeomorphic to $\L_\theta$. 
\end{defn}

In the case of free semigroup algebras, a free semigroup algebra $\fS$ is weak$^*$-weak$^*$ homeomorphic to $\L_n$ precisely when there is an injective \textsc{wot}-continuous homomorphism from $\fS$ to $\L_n$ \cite{DKP}. Hence, the existence of an injective \textsc{wot}-continuous homomorphism into $\L_n$ is used to define when a free semigroup algebra $\fS$ is analytic. 

A key factor in the simplicity of the characterisation of a free semigroup algebra being analytic is the rigidity of the norm-closed algebra generated by a row-isometry. That is, if $[S_1,\ldots,S_n]$ is a row-isometry then $\overline{\alg}^{\|\cdot\|}\{I,S_1,\ldots,S_n\}$ is completely isometrically isomorphic to the noncommutative disc algebra $\A_n$ \cite{Pop}.
There is more variation in the norm-closed algebras generated by representations of $2$-graphs. However, we will see in Theorem \ref{thm: rigidity} that in a wide class we do have the same rigidity as in the case of a single row-isometry. We give the following definition to describe these representations.

\begin{defn}
	Let $(S,T)$ be an isometric representation of $\gr$. We say that the representation $(S,T)$ is
	\emph{rigid} if $\overline{\alg}^{\|\cdot\|}\{I,S_1,\ldots,S_m,T_1,\ldots,T_n\}$ is completely
	isometrically isomorphic to $\A_\theta$. 
\end{defn}

We will first give an example of an isometric representation of a $2$-graph $\gr$ which is not rigid. We note that in this example $\bF_\theta^+$ is aperiodic.

\begin{example}\label{ex: not rigid}
	Let $[L_1,\ldots,L_n]$ be the left-regular representation of $\bF_n^+$ as in Example \ref{eg: fsg left-reg}. Let $[R_1,\ldots,R_n]$ be the right-regular representation, i.e. 
	\[
	 R_i\xi_w=\xi_{wi}
	\]
	for each $w\in\bF_n^+$ and $1\leq i\leq n$. It is clear that $L_iR_j=R_jL_i$ for $1\leq i,j\leq n$, and hence
	$(L,R)$ forms a representation of $\bF_{\id}^+$, where $\id$ is the identity permutation in
	$S_{n\times n}$. Let $\fA$ be the unital, norm-closed algebra generated by $(L,R)$. We will show
	that $\fA$ is not completely isometrically isomorphic to $\A_\id$.

	The representation $(L,R)$ extends to a completely contractive representation of $\A_\id$ if and only if it extends to a Cuntz-type representation of $\bF^+_\id$ \cite{DPYdiln}. Thus, it
	suffices to show that $(L,R)$ is not the compression of a Cuntz-type representation.

	Suppose $(S,T)$ is a Cuntz-type representation of $\bF_{\id}^+$ which extends $(L,R)$, i.e.
	$S_i|_{\H_n}=L_i$ and $T_j|_{\H_n}=R_j$. Denote by $\mt$ the identity element in $\bF_n^+$. 
	Note that
	\begin{equation*}
		R_2^*L_2L_1^*R_1\xi_{\mt}=\xi_{\mt}
	\end{equation*} 
	and
	\begin{equation*}
		R_2^*L_2L_1^*R_1\xi_{\mt}=T_2^*S_2S_1^*T_1\xi_{\mt}.
	\end{equation*}
	However, a calculation shows that any Cuntz-type representation of $\bF_{\id}^+$ necessarily satisfies
	$S_i^*T_j=T_jS_i^*$ for all $1\leq i,j\leq n$. Hence
	\begin{equation*}
		T_2^*S_2S_1^*T_1=S_1T_2^*T_1S_1^*=0,
	\end{equation*}
	since $T_2^*T_1=0$. This is a contradiction. Thus $(L,R)$ has no Cuntz-type extension and $\fA$ is not
	completely isometrically isomorphic to $\A_\id$.
\end{example}

As noted in Example \ref{ex: not rigid}, the norm-closed algebra of an isometric representation of $\gr$ being a completely contractive representation of $\A_\theta$ depends on the existence of a Cuntz-type extension. We now show the converse for the case when $\bF_\theta^+$ is aperiodic.

\begin{thm}\label{thm: rigidity}
	Let $(S,T)$ be a Cuntz-type representation of an aperiodic $2$-graph $\gr$ on $\H$. Let
	$\fA=\overline{\alg}^{\|\cdot\|}\{I,S_1,\ldots,S_m,T_1,\ldots,T_n\}$. Then, given any
	$\fA$-invariant projection $P$ on $\H$, the operator algebra $P\fA P$ is completely isometrically
	isomorphic to $\A_\theta$. That is, the representation $(PSP,PTP)$ is rigid.
\end{thm}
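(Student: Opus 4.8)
The plan is to reduce the assertion to a statement about the restriction of $(S,T)$ to $\ran P$, and then to compute the norm of $\Ath$ inside the $C^*$-algebra generated by the Cuntz-type representation. Since $P$ is $\fA$-invariant, $\ran P$ is invariant under every $S_i$ and $T_j$, so each $S_i|_{\ran P}$, $T_j|_{\ran P}$ is again an isometry and $(S',T'):=(S|_{\ran P},T|_{\ran P})$ is an isometric representation of $\gr$ on $\ran P$. Compression to an invariant subspace is a completely contractive homomorphism, so $P\fA P$ is, completely isometrically, the norm-closed unital algebra generated by $(S',T')$. It therefore suffices to show that the canonical unital homomorphism $\rho:\Ath\to P\fA P$ with $E_i\mapsto S_i'$ and $F_j\mapsto T_j'$ is a complete isometry; being surjective, it is then a completely isometric isomorphism.

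The completely contractive direction is immediate. As $(S',T')$ is the compression of the Cuntz-type representation $(S,T)$ to the semi-invariant subspace $\ran P$, it extends to a completely contractive representation of $\Ath$ by the criterion of \cite{DPYdiln} recalled in Example \ref{ex: not rigid}, and that extension is exactly $\rho$. Hence $\|p(S',T')\|\le\|p(E,F)\|$ for every matrix-valued noncommutative polynomial $p$, and the whole problem is the reverse inequality at each matrix level.

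For this reverse inequality aperiodicity is the decisive hypothesis. A Cuntz-type representation is a representation of the Cuntz--Krieger algebra $\ca(\gr)$, and by Theorem \ref{thm: periodic cond} aperiodicity forces $\ca(\gr)$ to be simple; thus every nonzero Cuntz-type representation is faithful and hence completely isometric on $\ca(\gr)$. Together with the completely isometric embedding of $\Ath$ into $\ca(\gr)$ (the $2$-graph analogue of $\A_n\hookrightarrow\O_n$ from \cite{Pop}), this gives $\|p(S,T)\|=\|p(E,F)\|$ at all matrix levels, which is the case $P=I$. The main obstacle is to descend from $(S,T)$ to the restriction $(S',T')$: passing to the invariant subspace $\ran P$ can only shrink norms, so I must prove that it does not shrink the norm of the analytic elements $p(S,T)$.

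To overcome this I would analyze $(S',T')$ through its defect by a Wold-type decomposition of $\ran P$, in the spirit of the wandering-subspace analysis developed later for atomic representations. The defect-free directions furnish reducing subspaces carrying sub-representations of the Cuntz-type $(S,T)$; these are again faithful representations of the simple algebra $\ca(\gr)$ and so preserve $\|p(E,F)\|$. The complementary directions form a shift part modeled on the left-regular representation, where the analytic norm is computed exactly as in $\L_\theta$, just as the unilateral shift already computes the full norm of an analytic symbol. Assembling the two pieces shows that $\ran P$ completely norms $\Ath$, yielding $\|p(S',T')\|\ge\|p(E,F)\|$ and hence equality at all matrix levels. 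I expect establishing this decomposition and the attendant norm preservation to be the hard step; the hypotheses are genuinely needed, since Example \ref{ex: not rigid} exhibits an isometric representation of an aperiodic graph that fails to be rigid precisely because it admits no Cuntz-type extension.
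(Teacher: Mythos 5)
Your opening moves match the paper: reducing to the complete isometry of the canonical homomorphism $\rho:\A_\theta\to P\fA P$, getting the completely contractive direction from compression to a (semi-)invariant subspace, and settling the case $P=I$ from aperiodicity --- faithfulness of $C^*(S,T)$, i.e.\ $C^*(S,T)\cong\O_\theta$ by \cite{DY1} (note: this is what does the work, not Theorem \ref{thm: periodic cond} itself), combined with $\O_\theta\cong\cenv(\A_\theta)$ from \cite{DPYdiln}. The gap is exactly at the step you flag as hard: the Wold-type decomposition of $(S',T')=(S|_{P\H},T|_{P\H})$ into a Cuntz-type part plus a shift part modeled on the left-regular representation \emph{does not exist} for isometric representations of $2$-graphs, not even for restrictions of Cuntz-type representations to invariant subspaces. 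Concretely, take $\gr=\bF_{\id}^+$ with $\id\in S_{n\times n}$, $n\geq 2$, which is aperiodic (see Example \ref{ex: not rigid}). Let $\tilde L$ be a Cuntz-type row isometry on $K\supseteq\ell^2(\bF_n^+)$ extending the left-regular row isometry $L$ with $\ell^2(\bF_n^+)$ invariant (e.g.\ an infinite-tail inductive limit as in \cite{DP}), and let $W$ be any Cuntz-type row isometry on $\H_W$. Then $(\tilde L\otimes I, I\otimes W)$ is a Cuntz-type representation of $\bF_{\id}^+$, the subspace $\ell^2(\bF_n^+)\otimes\H_W$ is invariant, and the restriction is $(L\otimes I, I\otimes W)$. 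If $M$ is any nonzero subspace reducing for this restriction, its projection commutes with the $I\otimes W_j$ and their adjoints, so the restriction of $I\otimes W$ to $M$ is defect-free; hence $M$ cannot carry a multiple of the left-regular representation of $\bF_{\id}^+$, whose second row isometry has nonzero defect. If, on the other hand, the restriction of $L\otimes I$ to $M$ were defect-free, then $M\perp\xi_\mt\otimes\H_W$; but taking $0\neq\zeta=\sum_w\xi_w\otimes h_w$ in $M$ and $w$ of minimal length with $h_w\neq0$, the vector $(L_w\otimes I)^*\zeta$ lies in $M$ and has nonzero $\xi_\mt$-component $h_w$, a contradiction. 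So the restriction has \emph{no} nonzero summand of either kind you allow, although it is nonzero: there is nothing to assemble, and your argument cannot start. (The conclusion of Theorem \ref{thm: rigidity} nevertheless holds here.)

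What the paper does instead is invert the compression $C^*$-algebraically rather than spatially. Inside $C^*(P\fA P)$ form the ideal $\fI$ generated by the defects $P-\sum_i S_iPS_i^*$ and $P-\sum_j T_jPT_j^*$. The quotient $C^*(P\fA P)/\fI$ is generated by a Cuntz-type representation of $\gr$, so aperiodicity identifies it with $\O_\theta\cong C^*(S,T)$ by an isomorphism matching generators. Composing $\pi_P:\A_\theta\to P\fA P$ (completely contractive by \cite[Theorem 3.8]{DPYdiln}, or by your semi-invariance argument) with the quotient map and this isomorphism recovers the completely isometric representation $\pi:E_i\mapsto S_i$, $F_j\mapsto T_j$; since a completely contractive map whose composition with completely contractive maps is a complete isometry must itself be completely isometric, $\pi_P$ is a complete isometry. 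In short, aperiodicity must be used twice --- once for faithfulness of $C^*(S,T)$, which you have, and once to identify the defect quotient of $C^*(P\fA P)$ with $\O_\theta$ --- and it is this second use that replaces the nonexistent Wold decomposition.
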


\begin{proof}
	We will first show that $\fA$ and $\A_\theta$ are completely isometrically isomorphic. The rest of the
	proof will follow from this, once we set up the correct commutative diagram.

	Since $(S,T)$ is a Cuntz-type representation it defines a $C^*$-representation of $\O_\theta$, the universal $C^*$-algebra for Cuntz-type representations of $\gr$, namely, the graph C*-algebra of $\gr$. In fact,
	since $\gr$ is aperiodic we have that $C^*(S,T)\cong\O_\theta$ \cite{DY1}. Further, note that
	$\O_\theta\cong C^*_{env}(\A_\theta)$ \cite{DPYdiln}. Hence, $\A_\theta$ canonically sits  completely isometrically
	inside $\O_\theta$, coinciding with $\fA$. It follows now that $\fA$ and $\A_\theta$ are completely isometrically isomorphic.

	Now let $P$ be any projection on $\H$ such that $P\H$ is an invariant subspace for $\fA$. Define
	representations $\pi$ and $\pi_P$ of $\A_\theta$ by $\pi(E_i)=S_i$ and $\pi(F_j)=T_j$; $\pi_P(E_i)=PS_i P$ 		and $\pi_P(F_j)=PT_j P$. By the preceding paragraph $\pi$ defines a completely isometrically isomorphic 	representation of $\A_\theta$. Let $\fI$ be the ideal
	of the $C^*$-algebra $C^*(P\fA P)$ generated by
	$P-\sum_{i=1}^m S_iPS_i^*$ and $P-\sum_{j=1}^n T_jPT_j^*$. We have that both $C^*(P\fA P)/\fI$ and 
	$C^*(S,T)$ are isomorphic to $\O_\theta$, and are hence isomorphic to each other. Denote by $p$ the
	natural isomorphism between $C^*(S,T)$ and $C^*(P\fA P)/\fI$, which sends generators to
	generators. We have the following commutative diagram:

	\[
		\xymatrix{
		\A_\theta \ar[r]|-{\pi} \ar[dd]_{\pi_P} &\fA\subseteq C^*(\pi(\gr)) &  \\
		 & & C^*(P\fA P)/\fI \ar[lu]_{p} \\
		P\fA P \ar@{^{(}->}[r]_{\id} &C^*(P\fA P) \ar[uu]_{p\circ q=:\tilde{\pi}} \ar[ur]_q &
		}
	\]
	Hence we have that
	\[
		\pi=\tilde{\pi}\circ\id\circ\pi_P.
	\]
	Now, as previously discussed, $\pi$ is a completely isometric isomorphism; $\id$ is a complete
	isometry; $\tilde{\pi}$ is completely contractive, since it is a $C^*$-homomorphism; and $\pi_P$ is
	completely contractive by \cite[Theorem 3.8]{DPYdiln}. Hence $\pi_P$ is a completely isometric
	isomorphism.
\end{proof}

Recall that we are primarily interested in nonself-adjoint $2$-graph algebras which arise from Cuntz-type representations of aperiodic $2$-graphs. By Theorem \ref{thm: rigidity}, all these nonself-adjoint $2$-graph algebras and their restrictions to invariant subspaces are rigid. Thus, we are not adding any conditions to our primary case of study when assume that our representations are rigid.

We will now state a number of results on analyticity. The assumption of rigidity allows one to follow methods similar to those in \cite{DKP}. We leave the details to the reader.

\begin{thm}\label{thm: analytic}
	Let $(S,T)$ be a rigid representation of $\gr$ with associated $2$-graph algebra $\fS$. Suppose
	$\varphi:\fS\rightarrow \L_\theta$ is a \textsc{wot}-continuous homomorphism such
	that $\varphi(S_i)=E_i$ and $\varphi(T_j)=F_j$. Then $\varphi$ is surjective and $\fS/\ker(\varphi)$
	is completely isometrically isomorphic to $\L_\theta$. Moreover, this map is a weak$^*$-weak$^*$
	homeomorphism.
\end{thm}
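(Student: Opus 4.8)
The plan is to mirror the free semigroup algebra argument from \cite{DKP} as closely as the rigidity hypothesis permits, reducing each step to a known fact about $\L_\theta$ or to Theorem \ref{thm: rigidity}. First I would observe that the restriction of $\varphi$ to the norm-closed subalgebra $\A$ generated by $\{I,S_1,\dots,S_m,T_1,\dots,T_n\}$ is exactly the canonical map onto $\A_\theta$: since $(S,T)$ is rigid, $\A$ is completely isometrically isomorphic to $\A_\theta$ via the generator-to-generator map, and $\varphi|_{\A}$ is this isomorphism. In particular $\varphi$ is isometric on the dense subalgebra of polynomials, so $\varphi$ restricted to $\A$ is already a complete isometry onto $\A_\theta$, and hence $\varphi$ has dense range in $\L_\theta$.

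Next I would establish surjectivity and the quotient statement. Because $\varphi$ is \textsc{wot}-continuous and $\fS$ is \textsc{wot}-closed, the image is a \textsc{wot}-closed subalgebra of $\L_\theta$ containing all generators, hence all of $\L_\theta$; one has to be a little careful that \textsc{wot}-continuity plus the unit ball argument really gives that the image is \textsc{wot}-closed, so I would invoke that $\fS$ is the \textsc{wot}-closure of $\A$ and that $\varphi$ maps the unit ball continuously. The quotient $\fS/\ker(\varphi)$ then inherits a completely contractive homomorphism $\bar\varphi$ onto $\L_\theta$, and I would show $\bar\varphi$ is a complete isometry by comparing it with the known structure of $\L_\theta$: on the copy of $\A_\theta$ sitting inside, $\bar\varphi$ is isometric by rigidity, and a Fejér/Cesàro approximation argument (using that $\L_\theta$ has a Cesàro-type summation recovering elements from their Fourier coefficients, exactly as $\L_n$ does) upgrades this to an isometry on all of $\fS/\ker\varphi$. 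This is the same mechanism by which one proves $\L_n$ is the weak$^*$-closed hull of $\A_n$ with matching coefficient data.

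For the weak$^*$--weak$^*$ homeomorphism, I would use that both $\fS/\ker\varphi$ and $\L_\theta$ are dual operator algebras (being \textsc{wot}-closed, equivalently weak$^*$-closed, algebras on Hilbert space, hence having preduals), and that a completely isometric surjective homomorphism between dual operator spaces which is weak$^*$-continuous is automatically a weak$^*$-homeomorphism provided its inverse is also weak$^*$-continuous. The map $\bar\varphi$ is \textsc{wot}-continuous by hypothesis on $\varphi$; the content is continuity of the inverse. Here I would appeal to the Krein--Smulian theorem: a linear map between dual Banach spaces is weak$^*$-continuous iff it is bounded and its restriction to the closed unit ball is weak$^*$-continuous, and for the inverse one checks weak$^*$-continuity on bounded sets using that $\bar\varphi$ is a complete isometry (so it is a weak$^*$-homeomorphism onto its range once the range is verified weak$^*$-closed).

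The main obstacle I expect is precisely the continuity of the inverse, equivalently showing $\bar\varphi$ is a weak$^*$-homeomorphism rather than merely a weak$^*$-continuous bijection; the soft duality argument gives continuity in one direction for free, but matching the weak$^*$ topologies both ways requires knowing that the Fourier-coefficient maps on $\L_\theta$ are weak$^*$-continuous and that they separate points and are recovered by a bounded Cesàro summation converging in the weak$^*$ (indeed \textsc{sot}) topology. This is the step that genuinely uses the detailed analytic structure of $\L_\theta$ established by Kribs and Power \cite{KriPow06}, and it is where the analogy with \cite{DKP} must be invoked with care; the rest is formal. Since the excerpt explicitly says ``The assumption of rigidity allows one to follow methods similar to those in \cite{DKP}. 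We leave the details to the reader,'' I would present this as a sketch pointing to those coefficient and summation facts rather than reconstructing them in full.
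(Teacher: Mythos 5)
Your overall route---adapt \cite{DKP} with rigidity playing the role of Popescu's theorem---is exactly what the paper intends; in fact the paper offers no proof at all beyond the remark that ``the assumption of rigidity allows one to follow methods similar to those in \cite{DKP}.'' So in approach you and the paper coincide. But your sketch has a genuine gap at the decisive step, and it is not the step you flag. You assert that $\fS/\ker(\varphi)$ ``inherits a completely contractive homomorphism $\bar\varphi$.'' Complete contractivity of $\varphi$ is not a hypothesis: $\varphi$ is only assumed \textsc{wot}-continuous, hence (by the closed graph theorem) merely bounded. Rigidity gives $\|\varphi(A)\|=\|A\|$ for $A$ in the norm-closed algebra $\A$ generated by $(S,T)$, but to pass from $\A$ to a general $A\in\fS$ your ``Fej\'er/Ces\`aro upgrade'' would need to approximate $A$ in the weak$^*$ topology by polynomials $p_\alpha(S,T)$ with $\|p_\alpha(S,T)\|\leq\|A\|$, i.e.\ a Kaplansky-type density theorem for $\fS$. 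That density is not available a priori: the Ces\`aro maps $\Sigma_k$ live on the $\L_\theta$ side, and the composites $\Sigma_k\circ\varphi$ carry no norm control back to $\fS$ (for free semigroup algebras such a density theorem is a later, separate result of Davidson, Li and Pitts \cite{DLP}, and in the injective case it is essentially equivalent to the contractivity you are asserting). This is precisely where \cite{DKP} does real work; your sketch replaces that work with an assumption.

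By contrast, the step you single out as the main obstacle---weak$^*$-continuity of the inverse---is the formal part. The correct order of events is: given $X$ in the ball of $M_r(\L_\theta)$, use the Ces\`aro polynomials $\Sigma_k(X)$ (Kribs--Power \cite{KriPow06}), pull them back through the rigidity isometry to elements of the ball of $M_r(\A)$, take a weak$^*$ cluster point $A$ in the weak$^*$-compact ball of $M_r(\fS)$, and use that \textsc{wot} and weak$^*$ agree on bounded sets to conclude $\varphi^{(r)}(A)=X$. This one argument simultaneously yields surjectivity, complete contractivity of $\bar\varphi^{-1}$, and the weak$^*$--weak$^*$ homeomorphism: $\bar\varphi$ then restricts to a weak$^*$-continuous bijection from the weak$^*$-compact set $\bar\varphi^{-1}(\mathrm{ball}(\L_\theta))$ onto $\mathrm{ball}(\L_\theta)$, hence a homeomorphism there, and the Krein--Smulian theorem upgrades this to global weak$^*$-continuity of $\bar\varphi^{-1}$. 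What it does \emph{not} yield is complete contractivity of $\bar\varphi$ itself: it produces, for each $X$, \emph{some} preimage of norm at most $\|X\|$, whereas the isometry statement requires that \emph{every} preimage have quotient norm at least $\|X\|$. Closing that remaining gap is the actual content that must be imported from \cite{DKP}, and your proposal as written does not supply it.
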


This theorem immediately allows us to give a simpler characterisation of when a nonself-adjoint $2$-graph algebra is analytic.

\begin{cor}\label{cor: analytic}
	Let $(S,T)$ be a rigid representation of $\gr$ with associated $2$-graph algebra $\fS$. Suppose there is
	an injective \textsc{wot}-continuous homomorphism $\varphi:\fS\rightarrow\L_\theta$ such that
	$\varphi(S_i)=E_i$ and $\varphi(T_j)=F_j$. Then $\fS$ is analytic.
\end{cor}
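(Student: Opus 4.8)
The plan is to deduce Corollary \ref{cor: analytic} directly from Theorem \ref{thm: analytic} with essentially no new work beyond recognizing that injectivity of $\varphi$ trivializes the kernel. Given the hypothesis, $\varphi:\fS\to\L_\theta$ is a \textsc{wot}-continuous homomorphism with $\varphi(S_i)=E_i$ and $\varphi(T_j)=F_j$, so Theorem \ref{thm: analytic} applies verbatim. It tells us that $\varphi$ is surjective and that $\fS/\ker(\varphi)$ is completely isometrically isomorphic to $\L_\theta$ via a weak$^*$-weak$^*$ homeomorphism.

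First I would invoke the injectivity assumption: since $\varphi$ is injective, $\ker(\varphi)=\{0\}$. Therefore the quotient map $\fS\to\fS/\ker(\varphi)$ is the identity, and $\fS/\ker(\varphi)$ is canonically identified with $\fS$ itself. Substituting this identification into the conclusion of Theorem \ref{thm: analytic}, the induced map on the quotient is simply $\varphi$ itself, and so $\varphi$ is a complete isometric isomorphism of $\fS$ onto $\L_\theta$.

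Next I would address the weak$^*$-weak$^*$ homeomorphism condition demanded by the definition of analytic. Theorem \ref{thm: analytic} already states that the map $\fS/\ker(\varphi)\to\L_\theta$ is a weak$^*$-weak$^*$ homeomorphism; under the identification $\fS/\ker(\varphi)=\fS$ this map is $\varphi$, so $\varphi$ itself is a weak$^*$-weak$^*$ homeomorphism. Combining the complete isometric isomorphism with the weak$^*$-weak$^*$ homeomorphism, $\fS$ satisfies precisely the definition of being analytic, namely that it is completely isomorphic and weak$^*$ to weak$^*$ homeomorphic to $\L_\theta$.

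I do not anticipate any genuine obstacle here, since the corollary is a formal consequence of the theorem once the kernel is seen to vanish; the only point requiring a moment's care is confirming that the isomorphism produced by Theorem \ref{thm: analytic} on the quotient is literally the map $\varphi$ (rather than some abstractly-induced map) under the identification $\fS/\{0\}\cong\fS$, which is immediate from the universal property of the quotient by the trivial ideal. Accordingly, the proof would read in one or two lines: apply Theorem \ref{thm: analytic}, observe $\ker(\varphi)=\{0\}$ by injectivity, and conclude that $\fS\cong\L_\theta$ completely isometrically and weak$^*$-weak$^*$ homeomorphically, hence $\fS$ is analytic.
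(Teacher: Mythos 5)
Your proof is correct and matches the paper's intended argument exactly: the paper presents Corollary \ref{cor: analytic} as an immediate consequence of Theorem \ref{thm: analytic} (with no separate written proof), and your observation that injectivity forces $\ker(\varphi)=\{0\}$, so that the quotient conclusion of the theorem applies to $\fS$ itself, is precisely that deduction.
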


The following lemma follows the same line of proof as \cite[Corollary 1.3]{DKP}.

\begin{lem}\label{lem: join analytic}
	Let $\fS$ be a nonself-adjoint $2$-graph algebra acting on a Hilbert space $\H$. Denote by
	$\S$ the collection of all projections $S$ on $\H$ such that $\fS|_{S\H}$ is analytic.
	Let $Q$ be 	
	the projection
	\begin{equation*}
		Q=\bigvee_{S\in\S}S.
	\end{equation*}
	Then $\fS|_{Q\H}$ is analytic.
\end{lem}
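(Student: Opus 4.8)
The plan is to reduce the statement to showing that the supremum $Q$ is \emph{attained} by a single projection in $\S$, i.e. that $Q\in\S$. Since the corollary above (Corollary \ref{cor: analytic}) characterises analyticity via the existence of an injective \textsc{wot}-continuous homomorphism into $\L_\theta$ sending generators to generators, it suffices to produce such a homomorphism on $\fS|_{Q\H}$. First I would observe that by an exhaustion/separability argument (as in \cite[Corollary 1.3]{DKP}) it is enough to treat the join of a countable increasing family: pick projections $S_1, S_2, \ldots \in \S$ whose join is $Q$, and replace them by $P_k = S_1 \vee \cdots \vee S_k$, so that $P_k \nearrow Q$ in the strong operator topology. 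The crux is then to show that if $\fS|_{P\H}$ and $\fS|_{P'\H}$ are analytic for two projections $P,P'\in\S$, then so is $\fS|_{(P\vee P')\H}$; iterating gives each $P_k\in\S$, and a limiting argument promotes this to $Q$.

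For the join of two analytic restrictions I would argue as follows. Suppose $\fS|_{P\H}$ and $\fS|_{P'\H}$ carry injective \textsc{wot}-continuous homomorphisms $\varphi_P, \varphi_{P'}$ into $\L_\theta$ fixing generators. The restriction map $\fS|_{(P\vee P')\H}\to\fS|_{P\H}\oplus\fS|_{P'\H}$ need not be injective on the nose, but composing with $\varphi_P\oplus\varphi_{P'}$ gives a \textsc{wot}-continuous homomorphism $\psi:\fS|_{(P\vee P')\H}\to\L_\theta\oplus\L_\theta$ sending each generator to its diagonal copy. The image lands in the diagonal copy of $\L_\theta$ (since generators map diagonally and $\L_\theta$ is generated by them), so after identifying the diagonal with $\L_\theta$ we obtain a \textsc{wot}-continuous homomorphism $\Psi:\fS|_{(P\vee P')\H}\to\L_\theta$ fixing generators. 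The key point is \emph{injectivity} of $\Psi$: any element of the kernel, being in the kernel of both $\varphi_P$ and $\varphi_{P'}$, must vanish on $P\H$ and on $P'\H$, hence on the closed span $(P\vee P')\H$; since $\fS|_{(P\vee P')\H}$ acts on that space, the element is $0$. By Corollary \ref{cor: analytic}, $\fS|_{(P\vee P')\H}$ is analytic.

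To pass from the countable increasing family $P_k\nearrow Q$ to $Q$ itself, I would build a single map into $\L_\theta$ by a diagonal/ultraproduct-style construction: each $P_k\in\S$ furnishes $\varphi_{P_k}\colon\fS|_{P_k\H}\to\L_\theta$, and these can be assembled into one \textsc{wot}-continuous homomorphism $\Phi\colon\fS|_{Q\H}\to\L_\theta$ fixing generators, using that the generators of $\fS|_{Q\H}$ are strong limits of their compressions to $P_k\H$ and that $Q\H=\overline{\bigcup_k P_k\H}$. Injectivity of $\Phi$ again follows because an element of the kernel vanishes on each $P_k\H$, hence on the dense union, hence on all of $Q\H$.

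The main obstacle I anticipate is precisely this last limiting step: ensuring that the limit homomorphism $\Phi$ is genuinely \textsc{wot}-continuous (equivalently weak$^*$-continuous) and, more delicately, that it remains \emph{injective} in the limit. Injectivity on each finite stage does not automatically survive a strong limit, so one must argue that a nonzero $X\in\fS|_{Q\H}$ is detected on some $P_k\H$ — this is where the structure of $\fS$ as the closed span of the free semigroup algebras $\fS_{k,l}$ and the rigidity hypothesis (via Corollary \ref{cor: analytic}) do the real work, mirroring the analogous subtlety in \cite[Corollary 1.3]{DKP}. A clean way to sidestep continuity worries is to verify the hypotheses of Corollary \ref{cor: analytic} directly for $Q$ by checking that the Fourier-type coefficients of any $X$ in $\ker\Phi$ all vanish, forcing $X=0$.
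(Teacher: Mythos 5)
Your middle paragraph (the join of two projections) is, in essence, the entire proof — and it is essentially the paper's proof, which simply invokes \cite[Corollary 1.3]{DKP}. The genuine gap is your overall organization: the reduction to a countable increasing family $P_k\nearrow Q$ followed by a limiting construction. That last step, which you yourself flag as the main obstacle, is not carried out (the ``diagonal/ultraproduct-style'' assembly of the maps $\varphi_{P_k}$, which live on \emph{different} algebras $\fS|_{P_k\H}$, is not a construction), and it is also entirely unnecessary. The direct argument handles an \emph{arbitrary} family at once: for every $S\in\S$ the compression $\rho_S\colon\fS|_{Q\H}\to\fS|_{S\H}$, $X\mapsto X|_{S\H}$, is a \textsc{wot}-continuous homomorphism because $S\H\subseteq Q\H$ is invariant, so $\psi_S:=\varphi_S\circ\rho_S\colon\fS|_{Q\H}\to\L_\theta$ is \textsc{wot}-continuous and sends generators to generators. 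All the maps $\psi_S$, $S\in\S$, therefore agree on the polynomial algebra in the generators, which is \textsc{wot}-dense, hence agree everywhere; call the common map $\psi$. If $\psi(X)=0$, then $\varphi_S(X|_{S\H})=0$, so $X|_{S\H}=0$ for every $S$ by injectivity of $\varphi_S$; a bounded operator vanishing on every $S\H$ vanishes on the closed span $Q\H$, so $X=0$. This is exactly your two-projection argument (your detour through the diagonal of $\L_\theta\oplus\L_\theta$ is just the agreement statement in disguise), and it globalizes verbatim — no exhaustion, no induction, no limit.

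Two further points of diagnosis. First, the difficulty you anticipate — that injectivity may not survive the limit and must be rescued by Fourier coefficients and rigidity — is illusory: once the map on $\fS|_{Q\H}$ is defined by composition as above, a nonzero $X\in\fS|_{Q\H}$ cannot vanish on the dense subspace $\bigcup_k P_k\H$, so it is automatically detected on some $P_k\H$; density of the union is all that is used, not rigidity. Rigidity enters only at the very last step, via Corollary \ref{cor: analytic} (equivalently Theorem \ref{thm: analytic}), to upgrade ``injective \textsc{wot}-continuous homomorphism fixing generators'' to ``analytic'' — a hypothesis your proof, like the paper's, quietly relies on (the paper applies this lemma only to rigid representations). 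Second, your separability/exhaustion step assumes $\H$ is separable, which the lemma does not; dropping the reduction removes this assumption as well.
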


\begin{defn}
 Let $(S,T)$ be an isometric representation of a $2$-graph $\gr$ on a Hilbert space $\H$ and let $\fS$ be the nonself-adjoint $2$-graph algebra generated by $(S,T)$. A unit
 vector $\zeta\in\H$ is called \emph{wandering} for $(S,T)$ and $\fS$ if $\<(ST)_u\zeta,(ST)_w\zeta\>=\delta_{u,w}$ for all $u,w\in\gr$.
\end{defn}

If a nonself-adjoint $2$-graph algebra $\fS$ has a wandering vector $\zeta$ then it follows from Corollary
\ref{cor: analytic} that $\fS|_{\fS[\zeta]}$ is analytic. In fact the homomorphism
$\varphi:\fS|_{\fS[\zeta]}\rightarrow \L_\theta$ is implemented by the unitary $U(ST)_w\zeta=\xi_w$. This fact,
together with Corollary \ref{cor: analytic} gives the following corollary to Lemma \ref{lem: join analytic}.

\begin{lem}\label{lem: wandering}
	Let $\fS$ be a nonself-adjoint $2$-graph algebra generated by a rigid representation.
	Suppose that $\zeta_j$ for $j\in\J$ are wandering vectors for $\fS$. Let $\M_j = S[\zeta_j]$
	and suppose that $\H = \bigvee_{j\in\J}\M_j$. Then $\fS$ is analytic and completely isometrically  		isomorphic to $\L_\theta$.
\end{lem}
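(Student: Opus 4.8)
The plan is to assemble the global analyticity of $\fS$ from the local analyticity on each cyclic piece $\M_j$, and then upgrade the resulting isomorphism to the stated form using the rigidity hypothesis. First I would record the key consequence of having a wandering vector, stated immediately before the lemma: for each $j$, the subspace $\M_j = \fS[\zeta_j]$ is invariant for $\fS$, and the restriction $\fS|_{\M_j}$ is analytic, with the analyticity witnessed by the unitary $U_j : \M_j \to \Fockth$ defined on the spanning set by $U_j (ST)_w \zeta_j = \xi_w$. Indeed, the wandering condition $\langle (ST)_u \zeta_j, (ST)_w \zeta_j\rangle = \delta_{u,w}$ says precisely that $\{(ST)_w \zeta_j : w \in \gr\}$ is an orthonormal set, so $U_j$ is a well-defined surjective isometry onto $\Fockth$, and a direct check on the generators shows $U_j S_i = E_i U_j$ and $U_j T_k = F_k U_j$, so conjugation by $U_j$ carries $\fS|_{\M_j}$ onto $\L_\theta$.

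Next I would invoke Lemma \ref{lem: join analytic}. Each projection $P_j$ onto $\M_j$ belongs to the collection $\S$ of projections on whose range $\fS$ restricts to an analytic algebra, since $\fS|_{\M_j}$ is analytic by the previous step. The hypothesis $\H = \bigvee_{j \in \J} \M_j$ says exactly that the join $\bigvee_{j} P_j$ is the identity projection $I$. Because $Q := \bigvee_{S \in \S} S \geq \bigvee_j P_j = I$, we have $Q = I$, and Lemma \ref{lem: join analytic} gives that $\fS|_{Q\H} = \fS$ itself is analytic.

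At this point analyticity already furnishes a completely isomorphic, weak$^*$-weak$^*$ homeomorphic map onto $\L_\theta$; to obtain the sharper conclusion that $\fS$ is completely \emph{isometrically} isomorphic to $\L_\theta$ I would use rigidity together with Theorem \ref{thm: analytic}. Analyticity of $\fS$ provides an injective, \textsc{wot}-continuous homomorphism $\varphi : \fS \to \L_\theta$ sending generators to generators (this is the content of Corollary \ref{cor: analytic} read in the reverse direction, or directly from the definition of analytic). Feeding this $\varphi$ into Theorem \ref{thm: analytic}, which applies precisely because the representation is rigid, yields that $\varphi$ is surjective with $\fS / \ker(\varphi)$ completely isometrically isomorphic to $\L_\theta$ via a weak$^*$-weak$^*$ homeomorphism; since $\varphi$ is injective, $\ker(\varphi) = 0$ and hence $\fS$ itself is completely isometrically isomorphic to $\L_\theta$.

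The step I expect to require the most care is verifying cleanly that each $P_j$ genuinely lands in $\S$, i.e. that $\fS|_{\M_j}$ being analytic is correctly certified by the unitary $U_j$ rather than merely completely isomorphic; the intertwining $U_j S_i = E_i U_j$, $U_j T_k = F_k U_j$ must be checked on the dense spanning set $\{(ST)_w\zeta_j\}$ and extended by continuity, and one must confirm that the induced map on algebras is a weak$^*$-weak$^*$ homeomorphism and not just bounded. Once the cyclic pieces are handled, the passage to the global statement via Lemma \ref{lem: join analytic} is immediate, and the isometric upgrade via rigidity and Theorem \ref{thm: analytic} is essentially formal, so the genuine work is concentrated in the first step.
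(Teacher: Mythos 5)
Your proposal is correct and follows precisely the route the paper intends (the paper states this lemma as a corollary without written proof): the unitaries $U_j(ST)_w\zeta_j=\xi_w$ make each $\fS|_{\M_j}$ analytic, Lemma \ref{lem: join analytic} assembles these into analyticity of $\fS$ on $\H=\bigvee_{j\in\J}\M_j$, and rigidity together with Theorem \ref{thm: analytic} upgrades the canonical injective \textsc{wot}-continuous homomorphism to a completely isometric isomorphism with $\L_\theta$. Nothing in your argument deviates from the paper's approach, and your flagged concern about certifying analyticity on each $\M_j$ via the intertwining unitary is exactly the point the paper itself addresses in the remark preceding the lemma.
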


Lemma \ref{lem: wandering} illustrates a connection between the existence of wandering vectors for a nonself-adjoint $2$-graph algebra $\fS$ and $\fS$ being analytic. In Section \ref{sec: atomic} we give some examples which have wandering vectors.

In the case of free semigroup algebras it has been shown that a free semigroup algebra is analytic precisely when the span of its wandering vectors is dense \cite{Ken}. It is unknown if the connection between wandering vectors and analyticity runs as deep for nonself-adjoint $2$-graph algebras.

%%%%%%%%%%%%%%%%%%%%%%%%%%%%FIRST STRUCTURE PROJECTION%%%%%%%%%%%%%%%%%%%%%%%%%%%%%%%%

\section{The Structure Theorems}
\noindent In this section we establish a lower-triangular $3\times 3$ form for rigid nonself-adjoint $2$-graph
algebra $\fS$. We first establish a $2\times 2$ form for any nonself-adjoint $2$-graph algebra. This form will be induced by a projection $P$ in $\fS$. Further, we will show that if $\fM$ is
the von Neumann algebra generated by $\fS$, then $\fS P=\fM P$. Thus, the left-hand column of the lower-triangular structure of $\fS$ will be a slice of the enveloping von Neumann algebra. This structure decomposition of $\fS$ depends deeply on the structure of the individual free semigroup algebras $\fS_{k,l}$ inside $\fS$. In the case of finitely correlated representations the desired projection has previously been described in \cite{Ful}, however its relationship to $\fM$ is new to this paper. We will be able to describe the projection similarly when certain free semigroup algebras inside $\fS$ are generated by row-isometries of dilation type. In general, however, the description of the projection is not as simple. In section \ref{sec: 2nd proj} we further decompose the $(2,2)$-entry of the $2\times 2$ form of $\fS$ when $\fS$ is rigid. This will give us a $3\times 3$ form. Here, the $(3,3)$-entry will be an analytic $2$-graph algebra.

\subsection{\texorpdfstring{A lower-triangular $2\times 2$ decomposition}{A lower-triangular 2 by 2 decomposition}}

In this subsection, we prove that every nonself-adjoint 2-graph algebra $\fS$ has a $2\times 2$ low-triangular form with the aid of its first structure projection $P$. 
Moreover, the left-hand column of $\fS$ is a slice of its enveloping von Neumann algebra. 

As the structure of free semigroup algebras plays an important role in the following analysis, we first recap the necessary results for free semigroup algebras here. The structure projection of a free semigroup algebra was introduced by Davidson, Katsoulis and Pitts \cite{DKP}. Earlier, in the case of finitely correlated representations the phenomenon was observed by Davidson, Kribs and Shpigel \cite{DKS}. The following theorem summarises some of their results \cite[Theorem 2.6, Corollary 2.7]{DKP}.

\begin{thm}[Properties of the Structure Projection]\label{struct props}
	Let $\fT$ be a free semigroup algebra. Then $\fT$ contains a projection with the following properties:
	\begin{enumerate}
		\item[(i)] $P\fT P$ is self-adjoint,
		\item[(ib)] $\fT P=\fM P$, where $\fM$ is the von Neumann algebra generated by $\fT$,
		\item[(ii)] $P^\perp\H$ is invariant for $\fT$,
		\item[(iii)] $P^\perp$ is the largest projection $Q$ so that $\fT_{Q\H}$ is analytic.
	\end{enumerate}
	Furthermore, the projection $P$ is unique.
\end{thm}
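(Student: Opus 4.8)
The plan is to build $P$ as the complement of the analytic part and then to show that this complement genuinely lies in $\fT$ and absorbs the von Neumann algebra on the right. First I would set
\[
P^\perp=\bigvee\{Q: Q\H \text{ is } \fT\text{-invariant and } \fT|_{Q\H} \text{ is analytic}\}, \qquad P=I-P^\perp.
\]
The free semigroup algebra analogue of Lemma~\ref{lem: join analytic}, namely \cite[Corollary 1.3]{DKP}, shows that the restriction of $\fT$ to the closed span of any family of analytic invariant subspaces is again analytic; applied to the whole collection it gives at once that $\fT|_{P^\perp\H}$ is analytic and that $P^\perp\H$, being a join of invariant subspaces, is invariant. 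This establishes (ii) and (iii) and, dually, shows that $P\H$ is coinvariant for $\fT$, so that, with respect to $\H=P\H\oplus P^\perp\H$, every element of $\fT$ is lower triangular.

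The remaining substance is (ib), together with the assertion $P\in\fT$; property (i) is then free, since $P\in\fT\subseteq\fM$ forces $P\fT P=P\fM P$, the compression of a von Neumann algebra to one of its own projections, which is self-adjoint. To approach (ib) I would first invoke Popescu's Wold-type decomposition \cite{Pop2} of the generating row-isometry into a shift summand and a Cuntz summand. The shift summand generates an ampliation of $\L_n$ and is therefore analytic, so it is swallowed entirely by $P^\perp$; this reduces everything to the case in which $[T_1,\dots,T_n]$ is of Cuntz type, $\sum_j T_jT_j^*=I$, and $\fM$ is generated by a representation of the Cuntz algebra $\O_n$.

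The engine of the proof is the dichotomy that a free semigroup algebra possessing no nonzero analytic invariant subspace must be self-adjoint, that is, $\fT=\fM$. \textbf{This is the main obstacle.} Its difficulty is that, with no wandering vector or analytic piece available, one has to recover each adjoint $T_j^*$ as a weak$^*$ limit of elements of $\fT$; \cite{DKP} accomplish this through a delicate $2\times2$ ampliation and intertwining argument, which shows that when no invariant subspace carries a copy of $\L_n$ the weak$^*$-closed algebra generated by the $T_j$ already contains the averages $\sum_j T_j A T_j^*$ and, on iterating, all of $\fM$. This is precisely the step where Read's self-adjoint free semigroup algebra \cite{Read} demonstrates that the conclusion can genuinely occur, and where the theorem acquires its depth.

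With the dichotomy in hand, I would deduce the general statement by localizing it to the coinvariant subspace $P\H$: maximality of $P^\perp$ guarantees that no analytic invariant subspace survives there, so the localized algebra behaves like its own von Neumann algebra, which forces $\fT P=\fM P$ and, via the self-adjointness of the resulting corner, places $P$ inside $\fT$. Finally, uniqueness is immediate from (iii): any projection with properties (ii) and (iii) has the same range as our $P^\perp$, so $P$ is determined. Recording that (i) follows from (ib) as above then completes the list of properties.
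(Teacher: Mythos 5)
You should first note that the paper never proves this statement: it is quoted as a summary of \cite[Theorem 2.6, Corollary 2.7]{DKP}, so the only meaningful comparison is with the argument in \cite{DKP}. Your plan inverts the logic of that argument: you make (ii) and (iii) true essentially by definition (taking $P^\perp$ to be the join of the analytic invariant subspaces, which is legitimate by the free semigroup analogue of Lemma~\ref{lem: join analytic}), and you then push all of the content into (ib) and into the claim $P\in\fT$. It is exactly there that the proposal breaks. Your ``engine'' is the dichotomy that a free semigroup algebra with no nonzero analytic invariant subspace is self-adjoint, applied to the ``localized algebra'' on $P\H$. But $P\H$ is only \emph{coinvariant} for $\fT$; the compressions $PT_jP$ are row contractions, not isometries (in the dilation-type and finitely correlated cases the compression to the canonical coinvariant subspace is defect-free but far from isometric), so $P\fT P$ is not a free semigroup algebra and no theorem about free semigroup algebras applies to it. Moreover, even if you knew $P\fT P$ were a von Neumann algebra, that concerns only the $(1,1)$ corner: the equality $\fT P=\fM P$ is a statement about the whole column, including $P^\perp\fM P$, and compressing to $P\H$ erases precisely that corner. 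Finally, the membership $P\in\fT$ --- arguably the deepest assertion of the theorem --- is never established; ``places $P$ inside $\fT$'' is an assertion, not an argument. There is also a circularity risk: the dichotomy you invoke is itself a corollary of the Structure Theorem, not an independently available tool, and the ``$2\times 2$ ampliation and intertwining argument'' you attribute to \cite{DKP} is not what carries their proof. (Your Wold-decomposition reduction has a related flaw: the algebra generated by a direct sum of a shift part and a Cuntz part is in general a proper subalgebra of the direct sum of the two algebras, so proving (ib) for the Cuntz summand alone does not give (ib) for $\fT$.)

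The proof in \cite{DKP} runs in the opposite direction. One forms the \wot-closed ideal $\fT_0$ generated by $T_1,\dots,T_n$ inside $\fT$ and sets $\fJ=\bigcap_{k\geq1}\overline{\fT_0^k}^{\textsc{wot}}$. The decomposition lemma quoted in this paper as Lemma~\ref{DKP lemma} shows that every element of $\overline{\fT_0^k}^{\textsc{wot}}$ has the form $\sum_{|w|=k}T_wA_w$ with $A_w\in\fT$, whence $T_j^*\fJ\subseteq\fJ$; thus $\fJ$ is a \wot-closed left ideal of the von Neumann algebra $\fM$, and therefore $\fJ=\fM P$ for a projection $P\in\fJ\subseteq\fT$. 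This yields $P\in\fT$, (i), (ib) and (ii) essentially at once, and the genuinely hard step is then (iii): showing that the restriction of $\fT$ to $P^\perp\H$ is completely isometrically isomorphic and weak$^*$-homeomorphic to $\L_n$, which in \cite{DKP} is a substantial separate argument rather than a definition. If you want a workable blueprint, it is this ideal-theoretic construction --- mirrored in this paper's own proof of Theorem~\ref{thm: struct thm} --- and not a localization of a dichotomy to a coinvariant compression.
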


\begin{defn}
 The projection in Theorem \ref{struct props} is called the \emph{structure projection} of the free semigroup algebra.
\end{defn}

 When $\fS$ is a nonself-adjoint $2$-graph algebra we denote by $P_{k,l}$ the structure projection of the free semigroup algebra $\fS_{k,l}$.

The case of finitely correlated nonself-adjoint $2$-graph algebras was studied in \cite{Ful}. Here we have for $k,l>0$ that $P_{k,l}=P_{1,1}=:P$ \cite{Ful}. In this case it is tempting to say $P$ is a structure projection for
$\fS$. Indeed it is shown in \cite{Ful} that $P$ satisfies the conditions (i) and (ii) of Theorem \ref{struct props}. This inspires the following definition.

\begin{defn}
	Let $\fS$ be a nonself-adjoint $2$-graph algebra.
	Let $P_{k,l}$ be the structure projection for $\fS_{k,l}$. We define the \emph{first structure projection} $P$ of $\fS$ to be
	\begin{equation*}
		P=\bigwedge_{k,l>0}P_{k,l}.
	\end{equation*}
\end{defn}

	We will show that the first structure projection satisfies analogues of properties (i), (ib) and (ii) 		of Theorem \ref{struct props}. It may not however satisfy the desired
	analogue of property  (iii). This leads us to consider a second structure projection
	for nonself-adjoint aperiodic $2$-graph algebras. We discuss this further in subsection \ref{sec: 2nd proj}. In section \ref{sec: atomic} we give examples of nonself-adjoint $2$-graph algebras with an invariant subspace on which it acts analytically. We do this by showing the existence of wandering vectors.

The following lemma uses methods similar to those in \cite[Theorem 3.19]{Ful}. Here
we use the existence of wandering vectors for analytic free semigroup algebras \cite{Ken} to overcome the fact that our representation is not finitely correlated.

\begin{lem}\label{struct proj ii}
Suppose that $(S,T)$ is a Cuntz-type representation of $\Fth$. Let $P_{k,l}$ be the structure
projection for $\fS_{k,l}$, where $k,l>0$ and let $\V = P_{k,l}\H$. Then $\V$ is $\fS^*$-invariant.

Thus if $P$ is the first structure projection for $\fS$, then $P\H$ is invariant for $\fS^*$.
\end{lem}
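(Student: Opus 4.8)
The goal is to show that $\V = P_{k,l}\H$ is invariant under $\fS^*$, i.e.~invariant under each $S_i^*$ and $T_j^*$. Since $\fS$ is generated by $S_1,\ldots,S_m,T_1,\ldots,T_n$, it suffices to verify $S_i^*\V\subseteq\V$ and $T_j^*\V\subseteq\V$ for all $i,j$. Equivalently, writing $P = P_{k,l}$, I want to show $P S_i^* P = S_i^* P$ and $P T_j^* P = T_j^* P$, or after taking adjoints, that $P\H$ is $\fS^*$-invariant means $P^\perp\H$ is $\fS$-invariant — but that is the wrong direction, so I must work directly with the adjoints. The plan is to exploit the relationship between the structure projection $P_{k,l}$ of the free semigroup algebra $\fS_{k,l}$ and the generators of the larger algebra $\fS$.

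First I would recall the defining property of $P_{k,l}$ from Theorem \ref{struct props}: $P_{k,l}^\perp\H$ is the largest subspace on which $\fS_{k,l}$ acts analytically, and it is characterized (via property (iii) and \cite{Ken}) as the closed span of the wandering vectors for the row-isometry $[ST]_{k,l}$. The key structural input is how the single generators $S_i,T_j$ interact with the degree-$(k,l)$ words. Because $(S,T)$ is Cuntz-type and satisfies the commutation relations $S_iT_j=T_{j'}S_{i'}$, left-multiplication by $S_i$ (resp.~$T_j$) permutes/relates the degree-$(k,l)$ isometries with those of degree $(k{+}1,l)$ (resp.~$(k,l{+}1)$) in a controlled way. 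Following the method of \cite[Theorem 3.19]{Ful}, I would show that if $\zeta$ is a wandering vector for $[ST]_{k,l}$, then its images under the generators (or rather, the relevant combinations dictated by the Cuntz-type and commutation relations) remain in a space on which the appropriate free semigroup algebra acts analytically, hence lie in $P_{k,l}^\perp\H$. This gives $\fS$-invariance of $P_{k,l}^\perp\H = \V^\perp$, which is exactly $\fS^*$-invariance of $\V$.

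Concretely, I would take a wandering vector $\zeta$ for $\fS_{k,l}$ and analyze $(ST)_w\zeta$ for $d(w)=(k,l)$. Using the defect-free (Cuntz-type) hypothesis, $\sum_{d(w)=(k,l)}(ST)_w(ST)_w^* = I$, so every vector decomposes along these ranges, and I would check that $S_i^\perp$-type computations preserve the wandering condition at the appropriate degree. The span of wandering vectors being $\fS_{k,l}$-reducing in the analytic part, together with the commutation relations linking degree $(k,l)$ to neighboring degrees, forces $\V^\perp = \spn\{$wandering vectors$\}$ to be invariant under $S_i$ and $T_j$. Taking adjoints yields $S_i^*\V\subseteq\V$ and $T_j^*\V\subseteq\V$, establishing the first claim.

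The main obstacle I anticipate is verifying that the image of a wandering vector under a single generator $S_i$ or $T_j$ again lies in the analytic part — the generators shift the bidegree, so $\zeta$ wandering for $[ST]_{k,l}$ does not immediately say anything about $S_i\zeta$ relative to the \emph{same} row-isometry $[ST]_{k,l}$. Handling this requires carefully using the Cuntz-type relations to re-express $S_i^*$ acting on degree-$(k,l)$ words and invoking the characterization of $P_{k,l}^\perp\H$ via density of wandering vectors \cite{Ken}, rather than a naive direct computation; this is precisely where the non--finitely-correlated case departs from \cite{Ful} and where Kennedy's result on analyticity must be brought in. For the final sentence of the statement, since $P=\bigwedge_{k,l>0}P_{k,l}$, we have $P\H=\bigcap_{k,l>0}P_{k,l}\H$, and an intersection of $\fS^*$-invariant subspaces is $\fS^*$-invariant, so $P\H$ is invariant for $\fS^*$ as claimed.
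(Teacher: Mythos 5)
Your proposal is correct, and it rests on the same two pillars as the paper's own proof: the fact (from \cite{DKP} and \cite{Ken}) that $\V^\perp=P_{k,l}^\perp\H$ is spanned by the $\fS_{k,l}$-orbits of wandering vectors, and the fact that the generators carry wandering vectors to wandering vectors. Your endgame, however, is genuinely different and more direct. You prove $S_i\V^\perp\subseteq\V^\perp$ and $T_j\V^\perp\subseteq\V^\perp$ outright: by the factorization property $T_j(ST)_x\zeta=(ST)_{x'}T_{j'}\zeta$, and since $T_{j'}\zeta$ is again wandering, this vector lies in $\fS_{k,l}[T_{j'}\zeta]$, which is orthogonal to $\V$ by \cite[Corollary 2.7]{DKP}; density and boundedness then give invariance of $\V^\perp$ under the generators, which is equivalent to $\fS^*$-invariance of $\V$. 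The paper instead follows the architecture of \cite[Theorem 3.19]{Ful}: it introduces the auxiliary subspace $\M=\sum_{d(w)=(k,l-1)}(ST)_w^*\V$, for which $T_j^*\M\subseteq\V$ holds automatically, shows that $\M^\perp$ is spanned by orbits of wandering vectors so that $\fS_{k,l}|_{\M^\perp}$ is analytic, and then invokes the maximality property of the structure projection (Theorem \ref{struct props}) to conclude $\V\subseteq\M$, whence $T_j^*\V\subseteq T_j^*\M\subseteq\V$; the $S_i^*$ are handled symmetrically with $\N=\sum_{d(w)=(k-1,l)}(ST)_w^*\V$. Your route avoids $\M$ altogether and is shorter; the paper's route stays closer to the finitely correlated argument it generalizes and yields the extra structural fact $\V\subseteq\M$ along the way. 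Two small corrections to your plan. First, the step you defer as the ``main obstacle'' --- that $T_j\zeta$ is wandering for $\fS_{k,l}$ whenever $\zeta$ is --- is resolved by exactly the kind of direct computation you disavow: for $d(u)=(pk,pl)$ write $(ST)_u=T_{j'}(ST)_{u'}$; if $j'\neq j$ the relevant inner product vanishes by orthogonality of the ranges of $T_j$ and $T_{j'}$, and if $j'=j$ one cancels the isometry $T_j$ and notes that $(ST)_{u'}T_j$ is again a word of degree $(pk,pl)$. Kennedy's theorem is needed only for the spanning statement, not for this step. Second, the span of the wandering vectors (namely $\V^\perp$) is $\fS_{k,l}$-invariant, not reducing, so your phrase ``$\fS_{k,l}$-reducing in the analytic part'' should be amended. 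Your intersection argument for the final sentence of the lemma is exactly right, and is the step the paper leaves implicit.
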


\begin{proof}
Define a subspace $\M$ of $\H$ by
\begin{equation*}
	\M = \sum_{d(w)=(k,l-1)}(ST)_w^*\V.
\end{equation*}
It follows that $T_j^*\M\subseteq\V$ for $j = 1,\ldots,n$. Thus if we can show that $\V\subseteq\M$ then it will follow that $T_j^*\V\subseteq\V$.

Note that we also have $(ST)_u^*\M\subseteq\M$, when $d(u)=(k,l)$.
It follows that $T_j\V^\perp\subseteq\M^\perp$ and $(ST)_u\M^\perp \subseteq\M^\perp$, when $d(u)=(k,l)$. By \cite{Ken} there are vectors in $\V^\perp$ which are
wandering for $\fS_{k,l}$. We will show that if $w\in\V^\perp$ is wandering for $\fS_{k,l}$ then $T_jw$
is wandering for $\fS_{k,l}$:
Take $w\in\V^\perp$ which is wandering for $\fS_{k,l}$. Suppose that $u\in\gr$, $d(u)=(pk,pl)$ for some $p\geq1$. By the commutation relations $(ST)_u=T_{j'}(ST)_{u'}$ for some $j'$ and some word $u'$ with $d(u')=(pk,pl-1)$. If $j'\neq j$ then it follows immediately that
\begin{equation*}
	\<(ST)_uT_j w,T_j w\>=0.
\end{equation*}
If $j'=j$ then 
\begin{equation*}
	\<(ST)_u T_j w,T_j w\>=\<(ST)_{u'}T_j w, w\> = 0
\end{equation*}
since $w$ is wandering for $\fS_{k,l}$. Hence $T_jw$ is $\fS_{k,l}$-wandering.

We have shown that the set $\W$ of $\fS_{k,l}$-wandering vectors in $\M^\perp$ is non-empty. We define the $\fS_{k,l}$-invariant subspace $\L\subseteq\M^\perp$
\begin{equation*}
	\L=\bigvee_{w\in\W}\fS_{k,l}[w].
\end{equation*}
Since, by \cite{DKP,Ken}, if $\W'$ is the set of $\fS_{k,l}$-wandering vectors in $\V^\perp$,
\begin{equation*}
	\V^\perp=\bigvee_{w\in\W'}\fS_{k,l}[w],
\end{equation*}
it follows by above that $T_jh\in\L$ for all $h\in\V^\perp$.

Take $x\in\M^\perp\ominus\L$, then we have shown that $(ST)_ux\in\L$ when $d(u)=(pk,pl)$. In particular $(ST)_ux\perp x$, i.e. $x$ is $\fS_{k,l}$-wandering. This contradicts the
choice of $x$. Hence $\L = \M^\perp$, and $\fS_{k,l}|_{\M^\perp}$ is analytic. 
By Theorem \ref{struct props}, it follows that $\V\subseteq\M$. Since
$T_j^*\M\subseteq\V$ it follows that $T_j^*\V\subseteq\V$.

A similar argument, with $\N = \sum_{d(w)=(k-1,l)}(ST)_w^*\V$ in place of $\M$, will show that $\V$ is $\fS^*$-invariant.
\end{proof}

\begin{lem}\label{struct proj i}
	Let $P$ be the first structure projection of $\fS$. Then $P\fS P$ is a self-adjoint algebra.
\end{lem}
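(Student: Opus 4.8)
The plan is to use Lemma \ref{struct proj ii} to put $\fS$ in lower-triangular form with respect to $\H=P\H\oplus P^\perp\H$, and then verify self-adjointness of $P\fS P$ on the \wot-dense set of monomials $(ST)_w$, treating the words of degree $(k,l)$ with $k,l\ge 1$ separately from the ``boundary'' words $S_u$ and $T_v$.

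First I would record the structural consequences of Lemma \ref{struct proj ii}. Since $P\H$ is $\fS^*$-invariant, $P^\perp\H$ is invariant for $\fS$, so $PAP^\perp=0$ for every $A\in\fS$; that is, every element of $\fS$ is lower-triangular for $\H=P\H\oplus P^\perp\H$. Consequently the compression map $\Phi\colon A\mapsto PAP$ is multiplicative, $(PAP)(PBP)=PABP$, so $P\fS P=\Phi(\fS)$ is genuinely an algebra, and a routine argument (factoring $\Phi$ through $\fS/\ker\Phi$) shows $P\fS P$ is weak$^*$-closed. Because the monomials $(ST)_w$ span a \wot-dense (hence weak$^*$-dense) subalgebra of $\fS$ and the adjoint is weak$^*$-continuous, to prove $(P\fS P)^*=P\fS P$ it suffices to show $P(ST)_w^*P\in P\fS P$ for every $w\in\gr$. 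I will also use repeatedly that $\fS^*$-invariance of $P\H$ gives $A^*P=PA^*P$, i.e. $A^*P\H\subseteq P\H$, for all $A\in\fS$.

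For a word $w$ with $d(w)=(k,l)$ and $k,l\ge 1$, the operator $(ST)_w$ is one of the generating isometries of $\fS_{k,l}$. Since $P=\bigwedge_{k,l>0}P_{k,l}\le P_{k,l}$, we have $P=PP_{k,l}=P_{k,l}P$, and by Theorem \ref{struct props}(i) the algebra $P_{k,l}\fS_{k,l}P_{k,l}$ is self-adjoint. Hence $P_{k,l}(ST)_w^*P_{k,l}=P_{k,l}BP_{k,l}$ for some $B\in\fS_{k,l}\subseteq\fS$, and compressing by $P$ (absorbing the intermediate $P_{k,l}$'s) gives $P(ST)_w^*P=PBP\in P\fS P$. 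This disposes of all the interior words.

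The main obstacle is the boundary words $S_u$ (degree $(k,0)$) and $T_v$ (degree $(0,l)$): the generators themselves lie in $\fS_{1,0}$ and $\fS_{0,1}$, which are deliberately excluded from the meet defining $P$, so no single structure projection controls $PS_i^*P$ or $PT_j^*P$ directly. The device I would use is the Cuntz-type (defect-free) relations to re-express a boundary adjoint through interior adjoints, namely
\[
 S_u^*=\sum_{j=1}^n T_j\,(S_uT_j)^* \qquad\text{and}\qquad T_v^*=\sum_{i=1}^m S_i\,(T_vS_i)^*,
\]
which follow from $\sum_j T_jT_j^*=I$ and $\sum_i S_iS_i^*=I$. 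In the first identity each $S_uT_j$ has degree $(k,1)$ and is therefore interior. Compressing, and using $(S_uT_j)^*P=P(S_uT_j)^*P$ to insert a middle $P$, one obtains
\[
 PS_u^*P=\sum_{j=1}^n (PT_jP)\,\bigl(P(S_uT_j)^*P\bigr),
\]
where $PT_jP\in P\fS P$ and $P(S_uT_j)^*P\in P\fS P$ by the interior case; since $P\fS P$ is an algebra the products, and hence the sum, lie in $P\fS P$. The second identity handles $T_v$ symmetrically, and the case $w=\mt$ is trivial ($PIP=P$). This establishes $P(ST)_w^*P\in P\fS P$ for all $w$, and the weak$^*$-closure argument of the first step then completes the proof. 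I expect this boundary step to be the crux: it is exactly where the choice to define $P$ as a meet only over $k,l>0$ must be reconciled with the need to control the generators, and it is the only place the Cuntz-type hypothesis underlying Lemma \ref{struct proj ii} is genuinely invoked.
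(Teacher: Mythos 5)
Your proof is correct and follows essentially the same route as the paper's own argument: interior words are handled via $P\le P_{k,l}$ and the self-adjointness of $P_{k,l}\fS_{k,l}P_{k,l}$, boundary words are handled by using the defect-free relations together with the $\fS^*$-invariance of $P\H$ (Lemma \ref{struct proj ii}) to re-express their adjoints through interior ones, and one finishes by density --- the paper does exactly this, inserting $\sum_{i,j}S_iT_jT_j^*S_i^*=I$ and passing through the auxiliary polynomial algebras $\P_{k+1,1}$ where you insert only $\sum_j T_jT_j^*=I$ at the monomial level. One small repair: run the closure argument in the \textsc{wot} throughout (the monomial span is \textsc{wot}-dense by the definition of $\fS$, the adjoint and compression are \textsc{wot}-continuous, and $P\fS P=\{B\in\fS: PBP=B\}$ is \textsc{wot}-closed because $P\in\fS$), since your parenthetical ``\textsc{wot}-dense hence weak$^*$-dense'' asserts the wrong direction of implication in general, although it is harmless here.
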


\begin{proof}
	Take $k,l>0$. If $A\in\fS_{k,l}$ then $P_{k,l}A^*P_{k,l}\in P_{k,l}\fS P_{k,l}$, since $P_{k,l}\fS_{k,l}P_{k,l}$ is self-adjoint. Thus we have
	\begin{align*}
		PA^*P&=PP_{k,l}A^*P_{k,l}P\\
			&\in PP_{k,l}\fS P_{k,l}P=P\fS P.
	\end{align*}
	
	Now suppose we have $A\in\fS_{k,0}$. Then, since our representation is Cuntz-type and $P\H$ is $\fS^*$-invariant, we have
	\begin{align*}
		PAP&=\sum_{i,j}PAS_iT_jT_j^*S_i^*P\\
			&=\sum_{i,j}(PAS_iT_jP)(PT_j^*S_i^*P).
	\end{align*}
	Now for each $i,j$, $AS_iT_j$ lies in a subalgebra of $\fS$ which has the  algebra of polynomials
	\begin{equation*}
		\P_{k+1,1}=\left\{\sum_{a>k,\ b>0}A_{a,b}:\ A_{a,b}\in\fS_{a,b}, A_{a,b}=0\text{ for all but finitely many } a,b \right\}.
	\end{equation*}
	as a $\wot$-dense subalgebra. Since, by above, each element $X\in\P_{k+1,1}$ satisfies $PX^*P\in P\fS P$ it follows that $P(AS_i T_j)^*P\in P\fS P$. Thus
	\begin{align*}
		PA^*P&=\left(\sum_{i,j}^n(PAS_iT_jP)(PT_j^*S_i^*P)\right)^*\\
			&=\sum_{i,j}(PS_iT_jP)(P(AS_iT_j)^*P)\in P\fS P.
	\end{align*}
	Similarly if $A\in \fS_{0,l}$ we can show $PA^*P\in P\fS P$.
	
	Arguing in a similar manner to above, the polynomials
	\begin{equation*}
		\P=\left\{\sum_{k,l\geq0}A_{k,l}:\ A_{k,l}\in\fS_{k,l}, A_{k,l}=0\text{ for all but finitely many } k,l\geq0\right\}
	\end{equation*}
	forms  a dense algebra in $\fS$. Thus $P\fS P$ is self-adjoint.
\end{proof}

We summarise these results in the following theorem, adding property (ib) of Theorem \ref{struct props} that the first structure projection defines 
a slice of the enveloping von Neumann algebra for $\fS$.

\begin{thm}\label{thm: struct thm}
	Let $\fS$ be a nonself-adjoint $2$-graph algebra associated to a Cuntz-type representation $(S,T)$ of 
	$\Fth$, and let $\fM$ be the enveloping
	von Neumann algebra. Then the first structure projection $P$ satisfies the following properties:
	\begin{enumerate}
		\item $P\fS P$ is self-adjoint,
		\item $P^\perp\H$ is an $\fS$-invariant subspace, and
		\item $\fS=\fM P+P^\perp\fS P^\perp$.
	\end{enumerate}
\end{thm}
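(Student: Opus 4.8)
The plan is to assemble the three claimed properties from the lemmas already established, then prove the one genuinely new ingredient, property (3), which encodes property (ib) from the free-semigroup-algebra theory. Property (1) is exactly Lemma \ref{struct proj i}. Property (2) follows from Lemma \ref{struct proj ii}: that lemma shows $P\H$ is $\fS^*$-invariant, which is equivalent to saying $P^\perp\H$ is $\fS$-invariant, since for $A\in\fS$ and $x\in P^\perp\H$ we have $\langle Ax, Py\rangle = \langle x, A^*Py\rangle = 0$ because $A^*P\H\subseteq P\H$. So the substance lies entirely in (3), the statement $\fS = \fM P + P^\perp\fS P^\perp$, which says the strictly-upper block $P\fS P^\perp$ vanishes and the lower-left block $P^\perp\fS P$ is all of $P^\perp\fM P$.

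First I would record the block decomposition forced by property (2). Since $P^\perp\H$ is $\fS$-invariant, every $A\in\fS$ satisfies $PAP^\perp = 0$, giving the lower-triangular form $A = PAP + P^\perp A P + P^\perp A P^\perp$ with respect to $\H = P\H\oplus P^\perp\H$. Thus $\fS = P\fS P + P^\perp\fS P + P^\perp\fS P^\perp$. The content of (3) is then the identification of the left-hand column $P\fS P + P^\perp\fS P = (P\fS P + P^\perp\fS P)$ with $\fM P$. To prove $\fS P = \fM P$, the inclusion $\fS P\subseteq\fM P$ is immediate since $\fS\subseteq\fM$. For the reverse inclusion I would work slice by slice through the free semigroup algebras $\fS_{k,l}$. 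For each $k,l>0$, property (ib) of Theorem \ref{struct props} gives $\fS_{k,l}P_{k,l} = \fM_{k,l}P_{k,l}$, where $\fM_{k,l}$ is the von Neumann algebra generated by $\fS_{k,l}$. The plan is to leverage $P\leq P_{k,l}$ (which holds since $P = \bigwedge P_{k,l}$) to transfer this identity to the smaller projection $P$: for a generator $X$ of $\fM$ one wants to produce $XP\in\fS P$.

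The main obstacle, and the step I would spend the most care on, is proving the reverse inclusion $\fM P\subseteq\fS P$. The delicate point is that $\fM$ is generated by $\fS$ together with the adjoints $S_i^*, T_j^*$, so it suffices to show $S_i^*P$ and $T_j^*P$ lie in $\fS P$, and more generally that $A^*P\in\fS P$ for $A\in\fS$. The natural route is to exploit Lemma \ref{struct proj i}: since $P\fS P$ is self-adjoint, for $A\in\fS$ we have $PA^*P\in P\fS P\subseteq\fS P$. Combining this with the $\fS^*$-invariance of $P\H$ from Lemma \ref{struct proj ii}, which gives $A^*P = PA^*P$ (because $P^\perp A^* P = (PAP^\perp)^* = 0$), we conclude $A^*P = PA^*P\in\fS P$ for every $A\in\fS$. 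Hence $\fS^*P\subseteq\fS P$, and since $\fM$ is the weakly-closed algebra generated by $\fS$ and $\fS^*$, a density argument propagates this to $\fM P\subseteq\fS P$: products of generators and their adjoints, applied to $P$, collapse into $\fS P$ using $\fS P\cdot\fS\subseteq\fS P$ and the just-established $\fS^* P\subseteq\fS P$, together with the fact that $P\H$ is invariant under both $\fS$ and $\fS^*$, so right-multiplication by $P$ commutes appropriately through the word.

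Finally I would reconcile this with the stated form. Having shown $\fS P = \fM P$ and that $P^\perp\H$ is $\fS$-invariant, the lower-triangular block decomposition reads $\fS = \fS P + P^\perp\fS P^\perp = \fM P + P^\perp\fS P^\perp$, since $\fS P = PAP + P^\perp AP$ already captures both entries of the left-hand column while the remaining block is precisely $P^\perp\fS P^\perp$. The only care needed here is to check that the sum is genuinely the algebra, i.e. that every element decomposes this way and conversely every such sum lies in $\fS$; the forward direction is the block decomposition above, and the reverse follows because $\fM P = \fS P\subseteq\fS$ and $P^\perp\fS P^\perp\subseteq\fS$ (as $P^\perp$ need not itself be in $\fS$, one argues that $P^\perp A P^\perp$ arises as the $(2,2)$-corner of $A\in\fS$, which lies in $\fS$ only after verifying $P\in\fS$, a fact I would extract from $P$ being a weak limit of the structure projections $P_{k,l}\in\fS_{k,l}\subseteq\fS$).
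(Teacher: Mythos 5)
Your handling of parts (1) and (2), and your key observation that $A^*P=PA^*P\in P\fS P$ for every $A\in\fS$ (combining Lemma \ref{struct proj ii} with Lemma \ref{struct proj i}), agree with the paper. The gap is in the density step that is supposed to upgrade $\fS^*P\subseteq\fS P$ to $\fM P\subseteq\fS P$. There you invoke ``the fact that $P\H$ is invariant under both $\fS$ and $\fS^*$.'' This is false: Lemma \ref{struct proj ii} gives only $\fS^*$-invariance of $P\H$ (equivalently, $\fS$-invariance of $P^\perp\H$). If $P\H$ were invariant under both $\fS$ and $\fS^*$, then $P$ would commute with every word in the generators and their adjoints, hence with $\fM$, and $P\H$ would be reducing --- impossible in general, since for a dilation-type (e.g.\ finitely correlated) representation $P\H$ is a \emph{proper cyclic} coinvariant subspace, and a proper cyclic subspace cannot be invariant. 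Without that false commutation your collapsing procedure breaks on any word in which an adjoint stands to the \emph{left} of a generator, the simplest instance being $S_1^*T_1P$: your tools give $T_1P\in\fS P$ and $S_1^*P=PS_1^*P\in P\fS P$, but neither touches $S_1^*T_1P$; writing $T_1P=PT_1P+P^\perp T_1P$, the term $S_1^*P^\perp T_1P$ is completely uncontrolled, since coinvariance says nothing about $S_1^*P^\perp$. Equivalently, the set $\{X:\ X\,\fS P\subseteq\fS P\}$ is an algebra containing $\fS$, but your argument never shows it contains $\fS^*$, which is exactly what the density argument needs.

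The missing idea --- and it is precisely where the Cuntz-type hypothesis, which your step (3) never uses, must enter --- is normal ordering. Because $(S,T)$ is defect-free, identities such as $S_i^*T_j=\sum T_lS_k^*$ (sum over suitable index pairs, obtained from $\sum_k S_kS_k^*=I$ and the commutation relations) allow every word in the generators and adjoints to be rewritten with all adjoints on the right; hence $\spn\{T_\alpha S_\beta T_\gamma^*S_\delta^*:\alpha,\gamma\in\bF_n^+,\ \beta,\delta\in\bF_m^+\}$ is \wot-dense in $\fM$. For such monomials your mechanism does work: coinvariance gives $T_\gamma^*S_\delta^*P=PT_\gamma^*S_\delta^*P$, Lemma \ref{struct proj i} places this in $P\fS P$, and left multiplication by $T_\alpha S_\beta\in\fS$ keeps the result in $\fS P$. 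This is exactly the paper's proof. A secondary remark: both this last absorption step and the \wot-closedness of $\fS P$ quietly require $P\in\fS$, and your justification of that fact (``$P$ is a weak limit of the structure projections $P_{k,l}$'') is not right as stated --- an infimum of noncommuting projections is not a limit of those projections. It can be repaired: powers of finite products of the $P_{k,l}$ lie in $\fS$ and converge strongly to the finite infima (alternating projections), and the decreasing net of finite infima converges strongly to $P$, so $P\in\fS$; but as written this, too, is a gap.
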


\begin{proof}
	It is only left to show the final part (iii). In order to do this we will show that the 
	\textsc{wot}-closed left ideal $\fJ=\fS P$ is also a left ideal in $\fM$. That is,
	we will show that $\fS P=\fM P$. 

	First note that
	\begin{equation*}
		\spn\{T_\alpha S_\beta T_\gamma^* S_\delta^*: \alpha,\gamma\in\bF_n^+, \beta,\delta\in\bF_m^+\}
	\end{equation*}
	is a dense subset in $\fM$. This follows from the fact that the representation $(S,T)$ is of Cuntz-type.
	For any $\alpha,\gamma\in\bF_n^+$ and $\beta,\delta\in\bF_m^+$ we have
	\begin{align*}
		T_\alpha S_\beta T_\gamma^* S_\delta^*P&=T_\alpha S_\beta PT_\gamma^* S_\delta^*P
	\end{align*}
	since $P\H$ is $\fS^*$-invariant. We also have $PT_\gamma^* S_\delta^*P\in P\fS P$ since
	$P\fS P$ is self-adjoint by Lemma \ref{struct proj i}.
	Hence
	\begin{equation*}
		T_\alpha S_\beta T_\gamma^* S_\delta^*P\in\fJ,
	\end{equation*}
	and $\fM P=\fS P$.
\end{proof}

%%%%%%%%%%%%%%%
\subsection{More on the first structure projection}\label{sec: 1st proj}
As discussed previously, in the case of a finitely correlated representation the structure projections of each
$\fS_{k,l}$ when $k,l>0$ all coincide. We prove a similar result here. We will show that if $(S,T)$ is a Cuntz-type representation of $\gr$ and $[ST]_{k,l}$ is of dilation type for all $k,l>0$ then we
have that $P_{k,l}=P_{1,1}$ for all $k,l>0$. Hence this shared structure projection for the free semigroup algebras is the first structure projection for the nonself-adjoint $2$-graph algebra.

\begin{lem}\label{lem: diln type}
	If the row-isometry $[(ST)_w: d(w)=(k,l)]$ is of dilation type for some $k,l>0$ then $P_{k,l}\geq P_{p,q}$ for all $p,q>0$.
\end{lem}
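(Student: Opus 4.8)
The plan is to prove the equivalent inequality $P_{k,l}^\perp\le P_{p,q}^\perp$ for every $p,q>0$, under the assumption that the row-isometry $[ST]_{k,l}$ is of dilation type. Set $\K=P_{k,l}^\perp\H$. The opening move is to record that $\K$ is invariant not just for $\fS_{k,l}$ but for the whole algebra $\fS$: Lemma~\ref{struct proj ii} (whose proof needs only that $(S,T)$ is of Cuntz-type) shows that $P_{k,l}\H$ is $\fS^*$-invariant, so $\K$ is $\fS$-invariant, and in particular it is invariant for each slice $\fS_{p,q}$. Given this, the goal reduces to showing that each free semigroup algebra $\fS_{p,q}|_\K$ is analytic; the maximality property of the structure projection, Theorem~\ref{struct props}(iii) applied to $\fS_{p,q}$, then forces $\K\subseteq P_{p,q}^\perp\H$, i.e. $P_{k,l}^\perp\le P_{p,q}^\perp$, which is exactly $P_{k,l}\ge P_{p,q}$.

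To prove $\fS_{p,q}|_\K$ analytic I would use Kennedy's criterion \cite{Ken}: a free semigroup algebra is analytic precisely when its wandering vectors span. Thus it suffices to exhibit, inside $\K$, a spanning family of $\fS_{p,q}$-wandering vectors. In fact it would be cleanest to produce vectors $\zeta\in\K$ that are wandering for the whole $2$-graph, i.e. $\langle(ST)_u\zeta,(ST)_w\zeta\rangle=\delta_{u,w}$ for all $u,w\in\gr$, since any such vector is automatically $\fS_{p,q}$-wandering for every $p,q$ simultaneously; producing these for a single spanning family would settle all $p,q$ at once.

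The starting point is that $\K$ is spanned by $\fS_{k,l}$-wandering vectors: by construction $\fS_{k,l}|_\K$ is analytic, so this is again Kennedy's theorem \cite{Ken}. The crux, and essentially the only place the dilation-type hypothesis enters, is to upgrade these $\fS_{k,l}$-wandering vectors to vectors wandering for all of $\gr$ (or at least for each $\fS_{p,q}$). Here I would use the global presentation of $[ST]_{k,l}$ as the minimal isometric dilation of a defect-free row-contraction on a coinvariant generating subspace $\V_0$, together with the commutation relations $S_iT_j=T_{j'}S_{i'}$: for an $\fS_{k,l}$-wandering $\zeta\in\K$ one expresses the overlaps $\langle(ST)_u\zeta,(ST)_w\zeta\rangle$ at degrees that are not multiples of $(k,l)$ in terms of the adjoint action of the generators, which lands in $P_{k,l}\H=\K^\perp$, and the dilation-type structure should be exactly what forces these remaining overlaps to vanish.

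I expect this upgrade to be the main obstacle. Its difficulty is genuine: an $\fS_{k,l}$-wandering vector need not be wandering for $\fS$, because an overlap such as $\langle S_i\zeta,\zeta\rangle$ involves $S_i^*\zeta$ and $\K$ is not invariant under the adjoints of the generators, so nothing formal makes it vanish; moreover, as noted in the text, the tight link between wandering vectors and analyticity available for free semigroup algebras is not known to persist for $2$-graph algebras, so some care is needed to stay within the free-semigroup-algebra setting where Kennedy's criterion applies. It is precisely the dilation-type hypothesis that should break this symmetry and deliver the one-sided inequality. Once the wandering vectors are in hand, the reduction described above gives $P_{k,l}^\perp\le P_{p,q}^\perp$, i.e. $P_{k,l}\ge P_{p,q}$, for all $p,q>0$; applying the same lemma with the two degrees interchanged, in the situation where every slice is of dilation type, then yields the equalities $P_{k,l}=P_{1,1}$ sought in this subsection.
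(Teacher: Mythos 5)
Your reduction is sound and uses the paper's own toolkit correctly: $\K=P_{k,l}^\perp\H$ is $\fS$-invariant by Lemma~\ref{struct proj ii}, it is spanned by $\fS_{k,l}$-wandering vectors by \cite{DKP,Ken}, and an invariant subspace on which $\fS_{p,q}$ acts analytically is orthogonal to $P_{p,q}\H$ by \cite[Corollary 2.7]{DKP}. But everything then hinges on the step you yourself flag as the main obstacle, and that step is never carried out: you assert that the dilation-type hypothesis ``should'' kill the overlaps $\langle (ST)_u\zeta,(ST)_w\zeta\rangle$ at degrees off the lattice generated by $(k,l)$, with no mechanism. By Kennedy's theorem \cite{Ken}, ``$\K$ is spanned by $\fS_{p,q}$-wandering vectors'' is equivalent to ``$\fS_{p,q}|_\K$ is analytic,'' which, by the very maximality property you quote, is equivalent to the inequality $P_{p,q}\le P_{k,l}$ itself; so the proposal reduces the lemma to a restatement of the lemma and stops there. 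Worse, the version you call cleanest --- vectors wandering for the whole $2$-graph --- is false in general: in Example~\ref{ex: periodic} (take $T=S$ with $S$ the minimal isometric dilation of the one-dimensional atomic representation, a representation of a periodic $2$-graph via Proposition~\ref{prop: fsg=2 graph alg}), each slice $[(ST)_w: d(w)=(k,l)]$ is, up to re-indexing, $[S_w:|w|=k+l]$ and is of dilation type, and $\K\neq\{0\}$ is spanned by $\fS_{k,l}$-wandering vectors; yet no vector at all is wandering in the $2$-graph sense, since $(ST)_{e_1}\zeta=(ST)_{f_1}\zeta$ for every $\zeta$. So no overlap computation can produce $2$-graph-wandering vectors, and an argument for mere $\fS_{p,q}$-wandering vectors would have to exploit dilation type in a way your sketch does not supply.

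For comparison, the paper's proof avoids wandering vectors entirely and supplies the missing mechanism through dilation theory. Compress to $\V=P_{k,l}\H$: by Lemma~\ref{struct proj ii} this is coinvariant for all of $\fS$, so the compressions $(A,B)$ of $(S,T)$ form a defect-free representation of $\gr$ on $\V$. The dilation-type hypothesis is used exactly once, via \cite[Proposition 6.2]{Ken2}: $[(ST)_w:d(w)=(k,l)]$ is the minimal isometric dilation of its compression to $\V$, and this forces $(S,T)$ to be the \emph{joint} minimal isometric dilation of $(A,B)$ --- a complementary Cuntz-type summand $(\hat{S}',\hat{T}')$ would satisfy $(\hat{S}'\hat{T}')_w=0$ for $d(w)=(k,l)$ by uniqueness of minimal dilations and \cite[Theorem 3.12]{Ful}, impossible for an isometry. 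A second application of \cite[Theorem 3.12]{Ful} then shows that every slice $[(ST)_w:d(w)=(p,q)]$ is the minimal isometric dilation of the defect-free $[(AB)_w:d(w)=(p,q)]$ on $\V$, whence \cite[Lemma 3.1]{DKS} together with \cite[Corollary 2.7]{DKP} gives $P_{p,q}\le P_{k,l}$ outright. If you want to complete your outline, replace the overlap computation with this transfer of minimality across degrees; attempting it vector-by-vector amounts to re-proving these dilation facts in disguise.
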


\begin{proof}
 	Let $\V=P_{k,l}\H$. By \cite[Proposition 6.2]{Ken2}, $[(ST)_w: d(w)=(k,l)]$ is the minimal isometric dilation of the compression of
	$[(ST)_w: d(w)=(k,l)]$ to $\V$. We know by Lemma \ref{struct proj ii} that $\V$ is invariant under $\fS^*$, and hence the compressions
	of each $S_i$ and $T_j$ to $\V$ form a representation of $\gr$. This representation will be defect-free, i.e. coisometric (see e.g.
	 \cite[Lemma 3.10]{Ful}.) We will denote the compression of $S$ to $\V$ by $A$ and the compression of $T$ to $\V$ by $B$. We wish to show
	 that the joint minimal isometric dilation of $(A,B)$ is $(S,T)$.
	 
	 Suppose $(\hat{S},\hat{T})$ is the unique minimal isometric dilation of $(A,B)$ and $(S,T)$ is not minimal.
	 By construction $(S,T)$ is an isometric dilation of $(A,B)$ and so,
	 by the minimality of $(\hat{S},\hat{T})$,
	 \[
	 	S_i=\hat{S}_i\oplus\hat{S}_i'
	 \]
	 and
	 \[
	 	T_j=\hat{T}_j\oplus\hat{T}_j'
	 \]
	 for each $1\leq i\leq m$ and $1\leq j\leq n$ where $(\hat{S}',\hat{T}')$ is some Cuntz-type representation of $\gr$. 
	 
	 Now, by the uniqueness of minimal isometric dilations of row-contractions and \cite[Theorem 3.12]{Ful}, we have that when $k,l>0$
	 \[
	      (ST)_w=(\hat{S}\hat{T})_w
	 \]
	 when $d(w)=(k,l)$. Hence the isometry $(\hat{S}'\hat{T}')_w=0$ when $d(w)=(k,l)$. This contradiction tells us that $(S,T)$ is the minimal isometric dilation of $(A,B)$.
	 
	 Now take any $p,q>0$. Another application of  \cite[Theorem 3.12]{Ful} tells us that $[(ST)_w: d(w)=(p,q)]$ is the minimal isometric dilation of 
	 $[(AB)_w: d(w)=(p,q)]$. It follows now, by \cite[Lemma 3.1]{DKS} and \cite[Corollary 2.7]{DKP} that $P_{p,q}\leq P_{k,l}$.
\end{proof}

We can now immediately describe the first structure projection in the case that $[(ST)_w: d(w)=(k,l)]$ is of dilation type for all $k,l>0$. This generalises what was
already known for finitely correlated representations (\cite[Proposition 4.12]{Ful}).

\begin{thm}\label{thm: diln type}
	Let $(S,T)$ be a Cuntz-type representation of $\gr$ and let $\fS$ be the nonself-adjoint $2$-graph
	generated by $(S,T)$.
	If the row-isometries $[(ST)_w: d(w)=(k,l)]$ are of dilation type for all $k,l>0$ then
	$P_{k,l}=P_{p,q}$ for all $k,l,p,q>0$.
	In particular the shared structure projection
	for the free semigroup algebras $\fS_{k,l}$, $k,l>0$, is the first structure projection for the
	nonself-adjoint $2$-graph algebra $\fS$.
\end{thm}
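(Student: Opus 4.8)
The plan is to read off the theorem as the symmetric consequence of Lemma~\ref{lem: diln type}, which is already at hand. That lemma provides, whenever $[(ST)_w: d(w)=(k,l)]$ is of dilation type, the single inequality $P_{k,l}\geq P_{p,q}$ for every $p,q>0$. Since the hypothesis here is that \emph{every} row-isometry $[(ST)_w: d(w)=(k,l)]$ with $k,l>0$ is of dilation type, I can apply the lemma in both directions.

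Concretely, I would fix arbitrary $k,l,p,q>0$. Applying Lemma~\ref{lem: diln type} to the pair $(k,l)$ yields $P_{k,l}\geq P_{p,q}$, since $[(ST)_w: d(w)=(k,l)]$ is of dilation type. Applying it again to the pair $(p,q)$ --- which is also of dilation type by hypothesis --- yields $P_{p,q}\geq P_{k,l}$. By antisymmetry of the order on projections, the two inequalities combine to give $P_{k,l}=P_{p,q}$, which is the first assertion.

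For the final sentence, let $P$ denote this common value of the $P_{k,l}$, $k,l>0$. Then by the definition of the first structure projection,
\[
	\bigwedge_{k,l>0}P_{k,l}=P,
\]
since the meet of a constant family of projections is that constant. Hence the shared structure projection of the free semigroup algebras $\fS_{k,l}$, $k,l>0$, is precisely the first structure projection of $\fS$.

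I do not anticipate any obstacle: all the substantive work is carried by Lemma~\ref{lem: diln type}, and the theorem is merely the antisymmetry of the projection order applied to the two inequalities the lemma supplies. The only point requiring attention is to invoke the ``for all $k,l>0$'' hypothesis for both pairs, so that both inequalities are simultaneously available.
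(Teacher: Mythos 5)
Your proposal is correct and matches the paper's intent exactly: the paper states Theorem \ref{thm: diln type} as an immediate consequence of Lemma \ref{lem: diln type}, with the implicit argument being precisely your symmetric double application of the lemma plus antisymmetry of the projection order, and the meet of the constant family giving the first structure projection.
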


Theorem \ref{thm: diln type} hints at a dependence the structure of the row-isometries $[(ST)_w: d(w)=(k,l)]$ when $k,l>0$ have on each other. We explore this further in the following proposition, where we show that if one of these row-isometries is of dilation type then the others all have no singular part in their Lebesgue-von Neumann-Wold decomposition. In order to do this we first prove a technical lemma about row-isometries of dilation type.

\begin{lem}\label{lem: diln type 2}
	Let $T=[T_1,\ldots,T_n]$ be a row-isometry of dilation type on the Hilbert space $\H$.
	Let $\fT$ be the free semigroup algebra generated by $T$,let $Q$ be the structure projection for
	$\fT$ and let $\V=Q\H$. Then if $v\in\V$ is a non-zero vector, there exists some $w\in\bF_n^+$ so that
	$T_wv\notin\V$. 
\end{lem}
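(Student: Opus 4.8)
The plan is to argue by contradiction. Suppose there is a nonzero $v\in\V$ with $T_wv\in\V$ for every $w\in\bF_n^+$, and set $\M=\bigvee_{w\in\bF_n^+}T_wv$. By hypothesis $\M\subseteq\V$, and since $T_iT_wv=T_{iw}v$ the subspace $\M$ is invariant for $\fT$ and is nonzero, as it contains $v$. The goal is to show that $\M$ is in fact reducing for $T$ and that $T|_\M$ is singular, which will contradict the assumption that $T$ is of dilation type, since by definition a dilation-type row-isometry has no singular direct summand.

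First I would analyze the restriction $[T_1|_\M,\ldots,T_n|_\M]$, which is a row-isometry on $\M$. Applying Popescu's Wold decomposition \cite{Pop2}, write $\M=\M_a\oplus\M_c$, where on $\M_a$ the tuple acts as an ampliation of the left-regular representation (the shift part) and on $\M_c$ it is of Cuntz-type. The space $\M_a$ is $T$-invariant, and $\fT|_{\M_a}$, being generated by a pure shift, is analytic. Since $\M_a\subseteq\M\subseteq\V=Q\H$, while by property (iii) of Theorem \ref{struct props} every invariant subspace on which $\fT$ is analytic lies in $Q^\perp\H$, we obtain $\M_a\subseteq Q\H\cap Q^\perp\H=\{0\}$. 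Hence $\M=\M_c$ and $[T_i|_\M]$ is Cuntz-type, that is, $\M=\bigoplus_{i=1}^n T_i\M$.

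From $\M=\bigoplus_i T_i\M$ it follows, using $T_i^*T_j=\delta_{ij}I$, that $T_i^*\M=\M$ for each $i$. Together with $T_i\M\subseteq\M$ this shows that $\M$ reduces $T$, and therefore reduces $\fT$ and the projection $Q$ (indeed $QP_\M=P_\M Q=P_\M$, where $P_\M$ is the orthogonal projection onto $\M$, since $\M\subseteq Q\H$). Compressing the self-adjoint algebra $Q\fT Q$, furnished by property (i) of Theorem \ref{struct props}, to the reducing subspace $\M$, and using that $Q$ restricts to the identity on $\M$ so that $(QAQ)|_\M=A|_\M$ for every $A\in\fT$, I would identify $(Q\fT Q)|_\M$ with $\fT|_\M$. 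As the restriction of a self-adjoint algebra to a reducing subspace, $\fT|_\M$ is self-adjoint, so the row-isometry $T|_\M$ is singular.

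Since $\M\neq\{0\}$ reduces $T$, the piece $T|_\M$ is a singular direct summand of $T$, contradicting the hypothesis that $T$ is of dilation type. Hence no such $v$ exists, and for every nonzero $v\in\V$ there is some $w\in\bF_n^+$ with $T_wv\notin\V$. I expect the main obstacle to be the passage from $\M$ being merely $\fT$-invariant to being reducing: the adjoints $T_i^*v$ need not a priori lie in $\M$, and it is precisely the elimination of the shift part via property (iii) that forces the Cuntz-type structure $\M=\bigoplus_i T_i\M$ and hence reducibility, after which property (i) delivers the singular summand.
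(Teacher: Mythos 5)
Your proof is correct, but it takes a genuinely different route from the paper's. You work directly with the cyclic subspace $\M=\fT[v]$: Popescu's Wold decomposition of $T|_\M$ together with the maximality property (iii) of Theorem~\ref{struct props} kills the pure part (a nonzero shift part would be an invariant subspace inside $Q\H$ on which $\fT$ acts analytically, violating maximality), so $T|_\M$ is Cuntz-type, which forces $\M$ to be reducing; the self-adjointness of $Q\fT Q$ (property (i), or just as easily (ib)) then exhibits $T|_\M$ as a singular direct summand, contradicting dilation type. The paper never decomposes $T|_{\fT[v]}$ at all: it passes to the complement $\U=\V\ominus\fT[v]$, shows $\fT[\U]$ is reducing, and uses the cyclicity $\fT[\V]=\H$ enjoyed by dilation-type row-isometries to manufacture a nonzero reducing subspace $\U'\subseteq\V$, orthogonal to $\U$, on which $T$ is singular. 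Both arguments terminate identically --- a nonzero singular summand inside $\V$ is incompatible with dilation type --- but yours replaces the paper's delicate subspace bookkeeping with general structure theory (Wold decomposition plus \cite[Corollary 2.7]{DKP}), and it does not need the cyclicity $\H=\fT[\V]$; the paper's argument, conversely, avoids invoking the Wold decomposition and property (iii). One small point you gloss over at the end: singularity of $T|_\M$ means that the free semigroup algebra \emph{generated by} $T|_\M$ is self-adjoint, not merely the restriction algebra $\fT|_\M$; this is harmless, since that free semigroup algebra is exactly the \textsc{wot}-closure of $\fT|_\M$ (elements of $\fT$ are \textsc{wot}-limits of polynomials, and restriction to an invariant subspace respects such limits), and the \textsc{wot}-closure of a self-adjoint algebra is self-adjoint.
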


\begin{proof}
	Suppose there is a non-zero vector $v$ in $\V$ such that $T_wv\in\V$ for all $w\in\bF_n^+$. Then the 		subspace $\fT[v]$ is a $\fT$-invariant subspace
	of $\V$. Note that $\fT[v]$ is not equal to all of $\V$ since if it were then $\V$ would be a reducing
	subspace for $\fT$, contradicting the fact the $T$ is of dilation type. Let $\U=\V\ominus\fT[v]$. Then 		$\U$ is a $\fT^*$-invariant subspace. It follows that $\fT[\U]$
	is a $\fT$-reducing subspace. Since $T$ is of dilation type, we have that $\fT[\V]=\H$. It follows
	that $\V^\perp\subseteq\fT[\U]$.

	Now, if $\V\subseteq\fT[\U]$ then the structure projection for
	$\fT$ is at most the projection onto $\U$. Since $v\notin\U$ we must have that $\V\not\subseteq\fT[U]$.
	Hence the subspace $\U':=\V\ominus(\fT[\U]\cap\V)$ is non-empty. Since $\fT[\U]$ is $\fT$-reducing
	and $\V$ is $\fT^*$-invariant, the subspace $\U'$ is $\fT^*$-invariant and orthogonal to $\U$.
	Since $\V^\perp\subseteq\fT[\U]$ it follows that $\V=\U\oplus\U'$ by the definition of $\U'$ and since 		$Q$ is the structure projection for $\fT$. It follows now that $\U'\subseteq\V$ is a $\fT$-reducing
	subspace. This contradicts $T$ being of dilation type, since in this case we must have that $T$ is
	singular on $\U'$. 
\end{proof}

\begin{prop}\label{prop: diln type 2}
	Suppose $[(ST)_w: d(w)=(k,l)]$ is of dilation type for some $k,l>0$, then when $p,q>0$ the isometric
	tuple $[(ST)_w: d(w)=(p,q)]$ has no singular part in its Wold-von Neumann-Lebesgue decomposition.
\end{prop}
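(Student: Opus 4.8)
The plan is to argue by contradiction, showing that a non-zero singular summand of $[(ST)_w:d(w)=(p,q)]$ would produce a vector trapped inside $\V:=P_{k,l}\H$ under the \emph{entire} semigroup action, contradicting the dilation-type conclusion of Lemma \ref{lem: diln type 2}. So suppose some $p,q>0$ give a non-zero reducing subspace $W$ for $[ST]_{p,q}$ on which $\fS_{p,q}$ is self-adjoint. First I would locate $W$ inside $\V$: on a self-adjoint (singular) reducing summand the structure projection of $[ST]_{p,q}$ restricts to the identity (Theorem \ref{struct props}), so $W\subseteq P_{p,q}\H$; since $[ST]_{k,l}$ is of dilation type, Lemma \ref{lem: diln type} gives $P_{p,q}\le P_{k,l}$, hence $W\subseteq P_{k,l}\H=\V$. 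I also record from Lemma \ref{struct proj ii} that $\V$ is $\fS^*$-invariant, so $(ST)_z^*\V\subseteq\V$ for every $z\in\gr$ (equivalently $\V^\perp$ is $\fS$-invariant).

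The heart of the argument is the claim that for every $v\in W$ and every $z\in\gr$ we have $(ST)_z v\in\V$. To prove it, fix $z$ with $d(z)=(a,b)$, choose $r$ with $rp\ge a$ and $rq\ge b$, pick any word $z'$ of degree $(rp-a,rq-b)$, and set $w=z'z$, so that $d(w)=r(p,q)$. By the factorization property of the $2$-graph, $(ST)_w$ is a product of $r$ operators of degree $(p,q)$, each preserving the reducing subspace $W$; hence $(ST)_w v\in W\subseteq\V$. Writing $(ST)_w v=(ST)_{z'}\bigl((ST)_z v\bigr)$ and applying the isometry $(ST)_{z'}^*$, the left-hand side returns $(ST)_z v$ by the Cuntz relations, while the right-hand side lies in $(ST)_{z'}^*\V\subseteq\V$ by coinvariance of $\V$. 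Therefore $(ST)_z v\in\V$, which proves the claim. Taking any non-zero $v\in W\subseteq\V$, the claim says the whole $\gr$-orbit of $v$ stays in $\V$, in particular $(ST)_z v\in\V$ for all $z$ with $d(z)=(rk,rl)$; this directly contradicts Lemma \ref{lem: diln type 2}, which asserts that some such $(ST)_z v$ must leave $\V$. Hence $W=0$.

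The step I expect to be the main obstacle is exactly the one the left-extension device is designed to overcome: the degrees $(k,l)$ and $(p,q)$ are in general incommensurable (they have no common multiple unless $(p,q)$ is a scalar multiple of $(k,l)$), so a subspace that reduces $[ST]_{p,q}$ is a priori only $[ST]_{p,q}$-invariant, whereas the tool available, Lemma \ref{lem: diln type 2}, only forbids $[ST]_{k,l}$-invariant subspaces inside $\V$. One cannot simply intersect or refine degrees. The trick that bridges the gap is to run the extension $w=z'z$ up to a multiple of $(p,q)$ (where $W$'s reducibility applies) and then pull back along the single isometry $(ST)_{z'}$ using coinvariance of $\V$; this converts the degree-$(p,q)$ reducibility of $W$ into control of the full $\gr$-orbit of each $v\in W$ within $\V$, thereby reaching the degree-$(rk,rl)$ vectors that Lemma \ref{lem: diln type 2} governs. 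I would double-check only the routine points: that $p,q\ge 1$ makes the extension possible, that $(ST)_{z'}^*(ST)_{z'}=I$ (isometric representation), and that the containment $W\subseteq P_{p,q}\H$ is precisely the statement that the singular summand sits in the ``von Neumann algebra part'' of the structure decomposition.
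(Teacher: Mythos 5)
Your proof is correct, and it is built from the same ingredients as the paper's: Lemma \ref{lem: diln type} places the singular subspace inside $\V=P_{k,l}\H$, Lemma \ref{struct proj ii} supplies the (co)invariance of $\V$, Lemma \ref{lem: diln type 2} supplies the contradiction, and a left-extension of words bridges the incommensurable degrees $(k,l)$ and $(p,q)$. The genuine difference is the direction in which the key step runs. The paper starts from the escaping word given by Lemma \ref{lem: diln type 2} --- a $z$ with $d(z)=(\alpha k,\alpha l)$ and $(ST)_z u\notin\V$ --- extends it on the left to a word of degree $(\beta p,\beta q)$, and uses invariance of $\V^\perp$ to conclude that the longer word still carries $u$ out of $\V$, contradicting invariance of the singular subspace under the degree-$(p,q)$ operators. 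You run the mirror argument: invariance of $W$ under degree-$(p,q)$ operators plus a pullback by $(ST)_{z'}^*$ (coinvariance of $\V$) shows the entire $\gr$-orbit of a non-zero $v\in W$ stays in $\V$, contradicting Lemma \ref{lem: diln type 2} directly. Your direction is in fact slightly tighter: the paper's step ``$(ST)_z u\notin\V$ implies $(ST)_{v'}\bigl((ST)_z u\bigr)\notin\V$'' is not purely formal, since $(ST)_z u\notin\V$ only says its $\V^\perp$-component is non-zero, and one needs a short orthogonality argument to rule out cancellation after applying $(ST)_{v'}$; your identity $(ST)_z v=(ST)_{z'}^*(ST)_{z'z}v\in(ST)_{z'}^*\V\subseteq\V$ makes the corresponding step immediate. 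The one point you flagged --- that the singular summand sits under $P_{p,q}$ --- is indeed fine, and the paper asserts it with no more justification than you give: $P_{p,q}^\perp\H$ is analytic, hence absolutely continuous, hence orthogonal to the singular space.
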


\begin{proof}
	Take any $p,q>0$. By Lemma \ref{lem: diln type} the structure projection corresponding to
	$[(ST)_w: d(w)=(p,q)]$ is dominated by the structure projection corresponding to $[(ST)_w: d(w)=(k,l)]$.
	It follows that, if $[(ST)_w: d(w)=(p,q)]$ has a singular part then this is realised on a 
	subspace $\U$ of $\V$.

	Take any non-zero vector $u\in\U$. By Lemma \ref{lem: diln type 2} there is a $w\in\gr$ such that
	$d(w)=(\alpha k,\alpha l)$ for some positive integer $\alpha$ and $(ST)_wu$ is not in $\V$. Now choose 
	a $v\in\gr$ such that $v=v'w$ for some $v'\in\gr$ and $d(v)=(\beta p,\beta q)$ for some positive
	integer $\beta$. By Lemma \ref{struct proj ii}, $\V^\perp$ is invariant for $S$ and $T$. It follows 		that $(ST)_vu$ is not in $\V$. In particular $(ST)_vu$ does not lie in $\U$, contradicting the
	assumption that $\U$ was reducing. Hence $[(ST)_w: d(w)=(p,q)]$ has no singular part.
\end{proof}

%%%%%%%%%%%%%%%%%%%%%%%%%%%%%%%%%%%
\subsection{\texorpdfstring{A lower-triangular $3\times 3$ decomposition}{A lower-triangular 3 by 3 decomposition}}\label{sec: 2nd proj}
As mentioned previously, one downside of Theorem \ref{thm: struct thm} when compared to the structure theorem for free semigroup algebras is that we do not know how the algebra behaves on the complement of the subspace given by the first structure projection.

Let $\fS$ be a nonself-adjoint $2$-graph algebra on $\H$ and let $P$ be its first structure projection. Unlike the free semigroup case, we know that $\fS|_{P^\perp\H}$ is not necessarily analytic. What if there was an invariant subspace $\M$ such that $\fS|_\M$ was analytic? Does Theorem \ref{thm: struct thm} give any insight into $\M$? In fact, it does.

\begin{lem}\label{lem: 2nd proj}
 Let $\fS$ be a nonself-adjoint $2$-graph algebra acting on a Hilbert space $\H$. Let $P$ be the first structure projection for $\fS$ as given in Theorem \ref{thm: struct thm}.

 Suppose that $\M\subseteq\H$ is an $\fS$-invariant subspace such that $\fS|_\M$ is analytic. Then $\M\subseteq P^\perp\H$.
\end{lem}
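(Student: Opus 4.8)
The goal is to show that any $\fS$-invariant subspace $\M$ on which $\fS$ acts analytically must sit inside $P^\perp\H$, where $P=\bigwedge_{k,l>0}P_{k,l}$ is the first structure projection. The natural strategy is to reduce the statement to the corresponding fact for the individual free semigroup algebras $\fS_{k,l}$, where the analogue is already available: by property (iii) of Theorem \ref{struct props}, $P_{k,l}^\perp$ is the \emph{largest} projection onto a subspace on which $\fS_{k,l}$ acts analytically. Since $P^\perp = \bigvee_{k,l>0}P_{k,l}^\perp$, it suffices to show that $\M\subseteq P_{k,l}^\perp\H$ for every fixed pair $k,l>0$; intersecting (equivalently, taking the join of the complements) then yields $\M\subseteq P^\perp\H$.

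So fix $k,l>0$. The plan is to argue that $\fS_{k,l}|_\M$ is analytic, and then invoke the maximality in Theorem \ref{struct props}(iii) to conclude $\M\subseteq P_{k,l}^\perp\H$. First I would check that $\M$ is invariant for $\fS_{k,l}$: this is immediate because $\fS_{k,l}\subseteq\fS$ (its generators $(ST)_w$ with $d(w)=(k,l)$ are words in the $S_i,T_j$), and $\M$ is $\fS$-invariant by hypothesis. The crux is then to transfer analyticity of $\fS|_\M$ to analyticity of the subalgebra $\fS_{k,l}|_\M$. Here I would use that $\fS|_\M$ being analytic means it is completely isomorphic and weak$^*$-weak$^*$ homeomorphic to $\L_\theta$ via the canonical generator-to-generator map; restricting this isomorphism to the subalgebra generated by the degree-$(k,l)$ words identifies $\fS_{k,l}|_\M$ with the corresponding free semigroup algebra $(\L_\theta)_{k,l}$ sitting inside $\L_\theta$. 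Since $(\L_\theta)_{k,l}$ is the free semigroup algebra generated by the left-regular degree-$(k,l)$ row-isometry, it is (a copy of) $\L_N$ with $N=(mn)^{\max(k,l)}$-many generators, hence analytic as a free semigroup algebra.

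The main obstacle is making the last transfer rigorous, i.e. verifying that the restriction of a weak$^*$-homeomorphic complete isomorphism $\fS|_\M\to\L_\theta$ to the \textsc{wot}-closed subalgebra $\fS_{k,l}|_\M$ still implements a weak$^*$-weak$^*$ homeomorphism onto the \textsc{wot}-closed image $(\L_\theta)_{k,l}$. The isomorphism carries the generators of $\fS_{k,l}|_\M$ (the degree-$(k,l)$ words in $S,T$ compressed to $\M$) to the corresponding degree-$(k,l)$ words in $E,F$, so it maps the generating set of $\fS_{k,l}|_\M$ onto the generating set of $(\L_\theta)_{k,l}$. Because the map is a weak$^*$-homeomorphism, it is \textsc{wot}-continuous with \textsc{wot}-continuous inverse on bounded sets, and hence sends the \textsc{wot}-closed algebra generated by one set of generators onto the \textsc{wot}-closed algebra generated by their images; thus its restriction gives an injective \textsc{wot}-continuous homomorphism $\fS_{k,l}|_\M\to\L_N$ sending generators to generators. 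By the free semigroup algebra analogue of Corollary \ref{cor: analytic} (the characterisation that such an injective \textsc{wot}-continuous homomorphism into $\L_N$ forces analyticity, from \cite{DKP}), $\fS_{k,l}|_\M$ is analytic.

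Having established that $\fS_{k,l}|_\M$ is analytic, Theorem \ref{struct props}(iii) applied to the free semigroup algebra $\fS_{k,l}$ gives that the projection $P_\M$ onto $\M$ satisfies $P_\M\leq P_{k,l}^\perp$, i.e. $\M\subseteq P_{k,l}^\perp\H$. As $k,l>0$ were arbitrary, $\M\subseteq\bigcap_{k,l>0}P_{k,l}^\perp\H = \big(\bigvee_{k,l>0}P_{k,l}\big)^{\!\perp}\H$. Finally, since $P=\bigwedge_{k,l>0}P_{k,l}\leq\bigvee_{k,l>0}P_{k,l}$, we have $\big(\bigvee_{k,l>0}P_{k,l}\big)^{\!\perp}\H\subseteq P^\perp\H$, and therefore $\M\subseteq P^\perp\H$, as desired.
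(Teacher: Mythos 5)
Your proposal is correct and takes essentially the same approach as the paper's proof, which likewise passes analyticity of $\fS|_\M$ to each free semigroup algebra $\fS_{k,l}|_\M$ (the paper asserts this transfer in one line, where you elaborate it), applies the maximality property of the structure projection (Theorem \ref{struct props}(iii), i.e.\ \cite[Corollary 2.7]{DKP}) to get $\M\perp P_{k,l}\H$, and concludes using $P=\bigwedge_{k,l>0}P_{k,l}$. The only slip, harmless to the argument, is your generator count: there are $m^kn^l$ words of degree $(k,l)$, not $(mn)^{\max(k,l)}$, and the row-isometry $[EF]_{k,l}$ is an ampliation of the left-regular representation of $\bF_{m^kn^l}^+$ rather than the left-regular representation itself, which is still analytic.
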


\begin{proof}
 As $\fS|_\M$ is analytic, for any $k,l\geq0$ $\fS_{k,l}|_\M$ is an analytic free semigroup algebra. Let $P_{k,l}$ be the structure projection for $\fS_{k,l}$. By \cite[Corollary 2.7]{DKP} $\M$ is orthogonal to $P_{k,l}\H$. Hence, since
 \[ P=\bigwedge_{k,l>0}P_{k,l}, \]
 $\M$ is orthogonal to $P\H$.
\end{proof}

With Lemma \ref{lem: join analytic} and Lemma \ref{lem: 2nd proj} in hand we are now ready to define the second structure projection.

\begin{defn}
	Let $\fS$ be a nonself-adjoint $2$-graph algebra acting on a Hilbert space $\H$. Let
	$\M\subseteq\H$ be the largest space such that $\fS|_{\M}$ is analytic. We call the 
	projection onto $\M$ the \emph{second structure projection} for $\fS$.
\end{defn}

We can now determine a $3\times 3$ decomposition of a nonself-adjoint $2$-graph algebra arising from a rigid representation.

\begin{thm}\label{thm: struct thm 2}
	Let $(S,T)$ be a rigid representation of a $2$-graph $\gr$. Let $\fS$ nonself-adjoint
	$2$-graph algebra generated by $(S,T)$ and let $\fM$ be the von-Neumann algebra
	generated by $\fS$.

	Then the nonself-adjoint $2$-graph algebra $\fS$ has a lower-triangular
	$3\times 3$ form. The left hand column in this decomposition is a left ideal $\fJ$ of $\fM$.
	The $(3,3)$-entry of the decomposition is an analytic nonself-adjont $2$-graph algebra.
\end{thm}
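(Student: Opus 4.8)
The plan is to overlay the two structure projections that have already been constructed. The first is the first structure projection $P$, which by Theorem \ref{thm: struct thm} produces a lower-triangular $2\times 2$ form: $P^\perp\H$ is $\fS$-invariant, $P\fS P$ is self-adjoint, and the left ideal $\fS P$ coincides with $\fM P$. The second is the projection onto the largest invariant subspace $\M$ on which $\fS$ acts analytically. Here the rigidity hypothesis is exactly what makes this second projection meaningful: it is Corollary \ref{cor: analytic}, and hence Lemma \ref{lem: join analytic}, that lets us speak of $\fS|_\M$ being analytic, and Lemma \ref{lem: join analytic} guarantees that there is a largest such invariant subspace $\M$. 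So first I would record these two invariance facts, together with $\fS P = \fM P$, as the raw data.

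The geometric heart of the matter is that the two projections are compatibly nested, and this is precisely Lemma \ref{lem: 2nd proj}: since $\fS|_\M$ is analytic, $\M\subseteq P^\perp\H$. Thus $\M\subseteq P^\perp\H\subseteq\H$ is an increasing chain of $\fS$-invariant subspaces, and I would decompose
\[
	\H = P\H \,\oplus\, \big(P^\perp\H \ominus \M\big) \,\oplus\, \M .
\]
Writing $R$ for the projection onto $\M$, the ordered blocks are $E_1 = P$, $E_2 = P^\perp - R$ and $E_3 = R$, with $R\leq P^\perp$. Lower-triangularity then follows directly from the two invariance relations: $P^\perp\H$ invariant gives $PAP^\perp = 0$ for every $A\in\fS$, which kills the $(1,2)$ and $(1,3)$ blocks, while $\M$ invariant together with $\M\subseteq P^\perp\H$ gives $AR = RAR = P^\perp A R$, which kills the $(2,3)$ block. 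Hence $\fS$ is lower-triangular with respect to this ordered decomposition.

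It then remains only to read off the diagonal and the left column. The $(1,1)$-entry is $P\fS P$, self-adjoint by Theorem \ref{thm: struct thm}; the $(3,3)$-entry is $\fS|_\M$, an analytic nonself-adjoint $2$-graph algebra by the very definition of $\M$. The left column of the decomposition is $\{AP : A\in\fS\} = \fS P$, and by the last clause of Theorem \ref{thm: struct thm} this equals $\fM P$, a \textsc{wot}-closed left ideal of $\fM$; setting $\fJ = \fM P$ gives the asserted form.

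I expect the only genuine subtlety to be the nesting $\M\subseteq P^\perp\H$, that is, the compatibility of the two structure projections; once Lemma \ref{lem: 2nd proj} supplies it, the triangular form and the identification of the three corners are bookkeeping with the invariance relations above. I would emphasise that the theorem deliberately says nothing about the remaining middle entry $E_2\fS E_2$, the compression to $P^\perp\H\ominus\M$, whose internal structure is exactly the part this decomposition leaves undetermined.
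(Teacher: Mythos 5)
Your proposal is correct and follows essentially the same route as the paper's proof: Theorem \ref{thm: struct thm} for the $2\times 2$ form and the identification $\fS P=\fM P$, rigidity plus Lemma \ref{lem: join analytic} for the existence of the largest analytic invariant subspace $\M$, and Lemma \ref{lem: 2nd proj} for the nesting $\M\subseteq P^\perp\H$ that splits the $(2,2)$-corner. The only difference is that you spell out the block-by-block invariance bookkeeping explicitly, which the paper leaves implicit.
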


\begin{proof}
	Theorem \ref{thm: struct thm} determines a lower triangular $2\times 2$ structure for $\fS$ 
	with a left ideal $\fJ$ of $\fM$ as the left-hand column.

	Since $(S,T)$ is a rigid representation, Lemma \ref{lem: join analytic} tells us that there 		is a largest subspace on which $\fS$ acts analytically.
	Lemma \ref{lem: 2nd proj} tells us that the second structure projection is necessarily 		orthogonal to the first structure projection.
	Hence we can further decompose the $(2,2)$-entry in the lower-triangular $2\times 2$ form 		obtained in Theorem \ref{thm: struct thm} to obtain a lower $3\times 3$ form.
\end{proof}

In Section \ref{sec: atomic} we will show that the second structure projection is necessarily non-zero for some 2-graph algebras generated from  atomic representations. We do this by showing the existence of wandering vectors in those cases.

%%%%%%%%%%%%%%%%%%%%%%%%%%%%%%SELF-ADJOINT 2-GRAPH ALGEBRAS%%%%%%%%%%%%%%%%%%%%%%%%%%%
\section[von Neumann Algebras]{\texorpdfstring{$2$-Graph Algebras as von Neumann Algebras}{2-Graph Algebras as von Neumann Algebras}}
\noindent Though generated in a nonself-adjoint way, nonself-adjoint $2$-graph algebras can be von Neumann algebras. This is perhaps surprising, but it is less so when we recall that free semigroup algebras can be self-adjoint. Indeed the following proposition shows that all free semigroup algebras can be described as
nonself-adjoint $2$-graph algebras. In this section we will also discuss the role the free semigroup algebras contained in a nonself-adjoint $2$-graph algebra play in determining if a nonself-adjoint $2$-graph algebra is a von Neumann algebra.

\begin{prop}\label{prop: fsg=2 graph alg}
	Let $\fS$ be a free semigroup algebra. Then $\fS$ can be described as a nonself-adjoint periodic 
	$2$-graph algebra.
\end{prop}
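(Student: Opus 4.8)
The plan is to show that any free semigroup algebra $\fT$ generated by a row-isometry $[T_1,\ldots,T_n]$ can be realized as a nonself-adjoint $2$-graph algebra for a suitably chosen periodic $2$-graph. The natural candidate is the periodic $2$-graph with $m=n$ and $\theta$ the identity permutation in $S_{n\times n}$: by Theorem \ref{thm: periodic cond}, when $m=1$ or $n=1$ the graph is automatically periodic, and more generally the identity permutation yields a periodic graph (with $\gamma$ the identity). I would take the isometric representation $(S,T)$ where one of the two row-isometries, say $S=[T_1,\ldots,T_n]$, is the given row-isometry, and I must supply a second row-isometry for the other colour that (i) satisfies the required commutation relations with $S$ and (ii) does not enlarge the generated \textsc{wot}-closed algebra beyond $\fT$.

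The cleanest choice is to let the two row-isometries coincide, i.e. set $T_i = S_i$ for all $i$, using $m=n$ and $\theta=\id$. With $\theta$ the identity, the commutation relation $S_iT_j = T_{j'}S_{i'}$ with $\theta(i,j)=(i',j')=(i,j)$ becomes $S_iT_j = T_jS_i$; with $T_j=S_j$ this reads $S_iS_j = S_jS_i$, which is false in general for a row-isometry. So the identity permutation with $T=S$ does not work, and I would instead take $\theta$ to be the flip permutation $\theta(i,j)=(j,i)$ on $S_{n\times n}$, so that the relation becomes $S_iT_j = T_iS_j$. Setting $T=S$, this is $S_iS_j=S_iS_j$, which holds trivially. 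I would then check that the flip permutation defines a periodic $2$-graph: for $a=b=1$ the bijection $\gamma=\id$ on $\{u:|u|=1\}$ satisfies $e_uf_v=f_{\gamma(u)}e_{\gamma^{-1}(v)}$, matching the condition in Theorem \ref{thm: periodic cond}, so $\gr$ is periodic with period $(1,-1)$.

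With this setup, $(S,S)$ is an isometric (indeed Cuntz-relation-satisfying) representation of the periodic $2$-graph $\gr$ determined by the flip $\theta\in S_{n\times n}$, and the nonself-adjoint $2$-graph algebra it generates is
\[
\overline{\alg}^{\textsc{wot}}\{I,S_1,\ldots,S_n,S_1,\ldots,S_n\} = \overline{\alg}^{\textsc{wot}}\{I,S_1,\ldots,S_n\} = \fT,
\]
since the second tuple contributes the same generators as the first. Thus $\fT$ is exactly the nonself-adjoint $2$-graph algebra of a periodic $2$-graph, as desired.

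The main obstacle, and the one point requiring genuine care, is verifying that the flip permutation is a legitimate choice, namely that the abstract semigroup $\gr$ in Definition \ref{defn: 2-graph} is well-defined (cancellative, with unique $e_uf_v$ factorizations) and genuinely periodic in the sense of Theorem \ref{thm: periodic cond}, and that $(S,S)$ respects all the relations $S_iS_j=S_iS_j$ imposed by $\theta$ being the flip. Since the flip is an involution, $\theta(i,j)=(j,i)$ gives $(i',j')=(j,i)$ and the relation $e_if_j=f_ie_j$ is consistent across all $i,j$, so the semigroup is a well-defined single-vertex $2$-graph; the representation relations reduce to tautologies under $T=S$, so no further compatibility check is needed. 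I would remark that this exhibits every free semigroup algebra as a $2$-graph algebra over a periodic graph, which is consistent with the paper's theme that periodicity is the obstruction to analyticity and rigidity.
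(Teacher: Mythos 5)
Your final construction --- taking $T=S$ with $\theta$ the flip permutation, which is periodic with period $(1,-1)$ via $\gamma=\id$, and noting that the generated \textsc{wot}-closed algebra is unchanged --- is correct and is exactly the paper's proof specialized to the identity permutation: the paper sets $T_j=S_{\alpha(j)}$ for an arbitrary $\alpha\in S_n$, giving $\theta(i,j)=(\alpha(j),\alpha^{-1}(i))$, which reduces to your flip when $\alpha=\id$. One small caveat: your discarded opening claim that $\theta=\id$ in $S_{n\times n}$ yields a periodic graph is false ($\bF_{\id}^+$ is aperiodic for $n\geq 2$, as the paper notes in Example \ref{ex: not rigid}), but since you abandon that candidate anyway, this does not affect your final argument.
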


\begin{proof}
	Let $S=[S_1,\ldots,S_n]$ be the row-isometry which generates $\fS$. Let $\alpha\in S_n$ be a
	permutation on $n$ elements. Let $T$ be the row-operator defined by
	\begin{equation*}
		T=[S_{\alpha(1)}, S_{\alpha(2)},\ldots, S_{\alpha(n)}].
	\end{equation*}
	Then $S$ and $T$ satisfy the commutation relations:
	\begin{equation}\label{calc1}
		S_iT_j=S_iS_{\alpha(j)}=T_{\alpha^{-1}(i)}S_{\alpha(j)}.
	\end{equation}
	Thus $(S,T)$ is a representation of $\gr$ where $\theta(i,j)=(\alpha(j),\alpha^{-1}(i))$. The
	nonself-adjoint $2$-graph algebra generated by $(S,T)$ is clearly equal to $\fS$. That $\gr$ is 
	a periodic $2$-graph follows immediately from \eqref{calc1} above and Theorem \ref{thm: periodic cond}.
\end{proof}

Taking $S$ in Proposition \ref{prop: fsg=2 graph alg} to be a singular row-isometry gives an example of a
nonself-adjoint $2$-graph algebra which is a von Neumann algebra. This example, however, is generated by a representation of a \textit{periodic} $2$-graph. The following example shows that there are examples which are generated by \textit{aperiodic} $2$-graph algebras.

\begin{example}
Let $S=[S_1,...,S_n]$ acting on $\H$ be a singular row isometry such that  the free semigroup algebra $\fS$ generated by $S$ is $\B(\H)$.
Then $U=[S_1\otimes I, ..., S_n\otimes I]$ and $V=[I\otimes S_1,..., I\otimes S_n]$ give a Cuntz-type representation of the $2$-graph $\bF_\id^+$. It is easy to see that the nonself-adjoint $2$-graph algebra
$\fS$ generated by $U$ and $V$ is $\B(\H)\otimes\B(\H)$, which is  a free semigroup algebra.
\end{example}

Theorem $1.5$ of \cite{DKP} establishes necessary and sufficient conditions for a free semigroup algebra to be a von Neumann algebra. If $\fT$ is a free semigroup generated by 
a row-isometry $[T_1,\ldots,T_n]$ then
$\fT$ is a von Neumann algebra if and only if the ideal in $\fT$ generated by $T_1,\ldots,T_n$ contains the identity.

The case of nonself-adjoint $2$-graph algebra $\fS$ is more complicated. Simply looking at the ideal generated by the generators of $\fS$ will not determine if $\fS$ is a von Neumann algebra.
We will however establish conditions for a nonself-adjoint $2$-graph algebra $\fS$ to be a von Neumann algebra based on other ideals in $\fS$.

%%%%%%%%%%%%%%%
But a simple test first. 

\begin{prop}
\label{P:ss}
Let $\fS$ be a nonself-adjoint 2-graph algebra generated by a representation $(S,T)$. If both $S$ and $T$ are singular, then $\fS$ is self-adjoint. 
\end{prop}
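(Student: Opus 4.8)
The plan is to read the singularity hypothesis through the definition given in the preliminaries: a row-isometry is singular exactly when the free semigroup algebra it generates is self-adjoint, hence a von Neumann algebra. Write $\fS_{1,0}$ for the free semigroup algebra generated by $S=[S_1,\ldots,S_m]$ and $\fS_{0,1}$ for the one generated by $T=[T_1,\ldots,T_n]$; both sit inside $\fS$. By hypothesis $\fS_{1,0}$ and $\fS_{0,1}$ are self-adjoint, so in particular $S_i^*\in\fS_{1,0}\subseteq\fS$ for every $i$ and $T_j^*\in\fS_{0,1}\subseteq\fS$ for every $j$.

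First I would record that $\fS$ now contains $I$ together with all four families $S_i,\,T_j,\,S_i^*,\,T_j^*$. Since $\fS$ is an algebra it contains every product of these operators, and hence the (non-closed) unital $*$-algebra $\mathcal{A}_0$ generated by $\{S_i,T_j\}$. As $\fS$ is $\wot$-closed it must then contain $\overline{\mathcal{A}_0}^{\wot}$, which is precisely the von Neumann algebra $\fM$ generated by $\{S_i,T_j\}$ --- equivalently, the enveloping von Neumann algebra of $\fS$. On the other hand $\fS\subseteq\fM$ is immediate, because $\fM$ is a $\wot$-closed unital algebra containing $S_i,T_j$ while $\fS$ is the smallest such algebra. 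Combining the two inclusions yields $\fS=\fM$, and a von Neumann algebra is self-adjoint, which is the claim.

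There is no real obstacle here beyond bookkeeping; the one step deserving care is the passage from ``$\fS$ contains the adjoints of its generators'' to ``$\fS$ is self-adjoint''. This implication is false for a general $\wot$-closed algebra, and the point is that here containing $S_i^*$ and $T_j^*$ forces $\fS$ to absorb the entire $*$-algebra $\mathcal{A}_0$ and its closure $\fM$, after which self-adjointness is a free consequence of the equality $\fS=\fM$ rather than something proved directly. I would phrase the conclusion so that it meshes with the notation of Theorem~\ref{thm: struct thm}, recording that under these hypotheses $\fS$ coincides with its enveloping von Neumann algebra $\fM$.
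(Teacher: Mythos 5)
Your proposal is correct and follows essentially the same route as the paper: singularity of $S$ and $T$ means $\fS_{1,0}$ and $\fS_{0,1}$ are self-adjoint, so $S_i^*,T_j^*\in\fS$, from which self-adjointness of $\fS$ follows. The paper leaves that last step implicit, while you spell out the (easy) absorption argument that $\fS$ must then contain the $*$-algebra generated by $\{S_i,T_j\}$ and hence its \textsc{wot}-closure $\fM$, giving $\fS=\fM$; this is a fine and correct elaboration, not a different method.
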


\begin{proof}
Since $S=[S_1,...,S_m]$ is a singular row-isometry, $\fS_{1,0}$ is a self-adjoint free semigroup algebra. So $S_i^*\in\fS$ for $1\le i\le m$. Similarly,
if $T=[T_1,...,T_n]$, then $T_j^*\in\fS$ for $1\le j\le n$.  
\end{proof}
%%%%%%%%%%%%%%%

The following lemma comes from the proof of \cite[Lemma 2.1]{DKP}.

\begin{lem}\label{DKP lemma} If $\fJ$ is a \textsc{wot}-closed right-ideal in a \textsc{wot}-closed algebra $\fA$ generated by isometries $V_1,\ldots,V_k$ with pairwise orthogonal ranges, then every element $A\in\fJ$ can be written
uniquely as
	\begin{equation*}
		A=\sum_{i=1}^kV_iA_i
	\end{equation*}
where for each $i$, $A_i\in\fA$.
\end{lem}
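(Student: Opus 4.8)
The plan is to write down the coefficients explicitly and read off both existence and uniqueness from the Cuntz relations $V_j^*V_i=\delta_{ij}I$ that the hypotheses on the $V_i$ guarantee. Given $A\in\fJ$, I would set $A_i:=V_i^*A$ for $1\le i\le k$. Uniqueness is then immediate: if $\sum_i V_iA_i=\sum_i V_iA_i'$ with all $A_i,A_i'\in\fA$, applying $V_j^*$ on the left and using $V_j^*V_i=\delta_{ij}I$ collapses the sum to $A_j=A_j'$. So the real content is (a) that each $A_i$ actually lies in $\fA$, and (b) that $\sum_i V_iA_i$ recovers $A$.

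For (a) the key point --- and the one requiring care, since $\fA$ is nonself-adjoint and in general $V_i^*\notin\fA$ --- is to exploit that $A$ lies in the right ideal generated by the $V_i$. Concretely, every element of $\fJ$ is a \textsc{wot}-limit of finite sums $\sum_j V_jB_j^{(\lambda)}$ with $B_j^{(\lambda)}\in\fA$ (such sums already form a right ideal containing each $V_i$, and $\fJ$ is its \textsc{wot}-closure). Left multiplication by the fixed bounded operator $V_i^*$ is \textsc{wot}-continuous, and $V_i^*\big(\sum_j V_jB_j^{(\lambda)}\big)=B_i^{(\lambda)}$ by the Cuntz relations. Hence $A_i=V_i^*A$ is the \textsc{wot}-limit of the net $\big(B_i^{(\lambda)}\big)$ of elements of $\fA$; since $\fA$ is \textsc{wot}-closed, $A_i\in\fA$.

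For (b), write $P=\sum_{i=1}^k V_iV_i^*$, the projection onto $\bigoplus_i V_i\H$. Then $\sum_i V_iA_i=PA$, so it suffices to show $PA=A$. This follows from the same approximation: $PV_j=V_j$ for each $j$ (again by $V_i^*V_j=\delta_{ij}I$), so $P\big(\sum_j V_jB_j^{(\lambda)}\big)=\sum_j V_jB_j^{(\lambda)}$, and \textsc{wot}-continuity of left multiplication by $P$ gives $PA=A$. Combining, $A=\sum_i V_iA_i$ with $A_i\in\fA$, as required.

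The main obstacle is step (a): the naive coefficients $A_i=V_i^*A$ live a priori only in $\B(\H)$, not in the nonself-adjoint algebra $\fA$, so the crux is the \textsc{wot}-continuity argument that realises $A_i$ as a \textsc{wot}-limit of genuine elements of $\fA$ and then invokes \textsc{wot}-closedness. Everything else is a formal consequence of the relations $V_j^*V_i=\delta_{ij}I$ together with the \textsc{wot}-continuity of one-sided multiplication.
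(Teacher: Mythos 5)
Your proof is correct and is essentially the intended one: the paper does not prove this lemma itself but attributes it to the proof of \cite[Lemma 2.1]{DKP}, which runs exactly as you do --- take $A_i=V_i^*A$, use \textsc{wot}-continuity of left multiplication by $V_i^*$ and by $P=\sum_i V_iV_i^*$ on a net from the algebraic right ideal, and invoke \textsc{wot}-closedness of $\fA$, with uniqueness from $V_j^*V_i=\delta_{ij}I$. Your identification of step (a) as the only point of substance (since $\fA$ is nonself-adjoint and $V_i^*\notin\fA$ in general) matches the original argument precisely.
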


\begin{thm}\label{thm: selfadj}
	Given $k,l>0$, let $\fJ_{k,l}$ be the \wot-closed right-ideal generated by the row-isometry
	$[(ST)]_{k,l}$ in a nonself-adjoint $2$-graph algebra $\fS$. Then $\fS$ is a von Neumann algebra if 		and only if $\fJ_{k,l}$ contains the identity.
\end{thm}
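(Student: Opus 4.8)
The plan is to imitate the proof of the corresponding statement for a single free semigroup algebra (\cite[Theorem 1.5]{DKP}), with Lemma \ref{DKP lemma} as the main technical device. The key observation is that for $k,l>0$ the operators $\{(ST)_w:d(w)=(k,l)\}$ form a family of $m^kn^l$ isometries with pairwise orthogonal ranges, generating the free semigroup algebra $\fS_{k,l}\subseteq\fS$, so Lemma \ref{DKP lemma} applies to the right ideal $\fJ_{k,l}$ inside $\fS$. I will treat the two implications separately; the substantive one is ``$\fJ_{k,l}\ni I\Rightarrow\fS$ a von Neumann algebra'', which is self-contained, and the genuine obstacle lives in the converse.

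For the implication $\fJ_{k,l}\ni I\Rightarrow\fS=\fS^*$, apply Lemma \ref{DKP lemma} with $\fA=\fS$ and the isometries $V_w=(ST)_w$, $d(w)=(k,l)$, to obtain an \emph{exact} finite expansion
\[
 I=\sum_{d(w)=(k,l)}(ST)_wA_w,\qquad A_w\in\fS.
\]
Two things follow. First, evaluating at an arbitrary $h\in\H$ shows $\H=\bigvee_{d(w)=(k,l)}\ran(ST)_w$, so $[ST]_{k,l}$ is defect-free; a short computation with $\sum_{|u|=k}S_u\bigl(\sum_{|v|=l}T_vT_v^*\bigr)S_u^*=I$ then forces $S$ and $T$ to be defect-free individually, i.e. $(S,T)$ is of Cuntz-type. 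Second, multiplying the displayed identity on the left by $(ST)_v^*$ and using $(ST)_v^*(ST)_w=\delta_{v,w}I$ collapses the sum to $(ST)_v^*=A_v\in\fS$; hence $\fS\supseteq\fS_{k,l}^*$.

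It then remains to descend from these degree-$(k,l)$ adjoints to $S_i^*$ and $T_j^*$. Writing $(ST)_{e_uf_v}^*=T_v^*S_u^*$ with $|u|=k$, $|v|=l$, right-multiplication by $S_u\in\fS$ gives $T_v^*\in\fS$ (using $S_u^*S_u=I$), and left-multiplication by $T_v$ followed by summation over $|v|=l$ gives $S_u^*=\sum_{|v|=l}T_vT_v^*S_u^*\in\fS$ (using defect-freeness of $T$). Peeling off one letter, $T_j^*=T_v^*T_{v_0}\in\fS$ when $v=v_0j$, and similarly $S_i^*\in\fS$. Thus the unital \textsc{wot}-closed algebra $\fS$ contains $S_i,S_i^*,T_j,T_j^*$, so $\fS$ is self-adjoint, i.e. a von Neumann algebra. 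I expect this descent, while routine, to be the technically fussiest bookkeeping in the argument.

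For the converse, $\fS$ a von Neumann algebra gives $\fS=\fS^*$, so $(ST)_w^*\in\fS$ for all $d(w)=(k,l)$; if moreover $[ST]_{k,l}$ is defect-free then
\[
 I=\sum_{d(w)=(k,l)}(ST)_w(ST)_w^*\in\fJ_{k,l},
\]
each summand being $(ST)_w$ times the element $(ST)_w^*\in\fS$. In the Cuntz-type setting that is the standing concern of this section, defect-freeness is automatic and the argument is complete. The one point demanding genuine care — and the main obstacle — is that, without assuming Cuntz-type at the outset, the content of the theorem is exactly that a von Neumann $\fS$ must be defect-free. My plan to establish this is to note that the defect projection $P_0=I-\sum_iS_iS_i^*$ lies in $\fS$, that $P_0\H$ is coinvariant for $T$ (via the commutation relations, $T_jS_i=S_aT_b$ lands in $\ran S$), and that $S$ compresses to $0$ there; the corner $P_0\fS P_0$ is then the free semigroup algebra of the compressed row-isometry $[P_0T_jP_0]$ and, being a corner of a von Neumann algebra, is itself a von Neumann free semigroup algebra, forcing that compression to be Cuntz and hence $P_0\H\subseteq\ran T$. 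Ruling out $P_0\ne 0$ from here — presumably by exhibiting an analytic (left-regular type) piece that a self-adjoint algebra cannot carry — is the delicate step, and is where I would focus the remaining effort.
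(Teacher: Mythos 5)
Your proof is correct and, in substance, it is the paper's proof: both directions hinge on Lemma \ref{DKP lemma} applied to $I\in\fJ_{k,l}$. The only difference in the forward implication is bookkeeping: the paper left-multiplies $I=\sum_{d(w)=(k,l)}(ST)_wA_w$ by $S_{i_0}^*$ (resp.\ $T_j^*$) so that orthogonality of ranges kills all but the words beginning with $e_{i_0}$ (resp.\ $f_j$, after using the commutation relations to put the red letter first), obtaining $S_{i_0}^*, T_j^*\in\fS$ in one step; you instead collapse the sum to get $(ST)_v^*=A_v\in\fS$ and then peel off letters. Both are valid, and your extra observation that $I\in\fJ_{k,l}$ already forces $(S,T)$ to be of Cuntz-type is correct and worth keeping, since it shows all the delicacy of the general statement sits in the converse.

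Concerning the converse: the ``genuine obstacle'' you describe is not overcome in the paper either. The paper's entire proof of that direction is the display $I=\sum_{d(w)=(k,l)}(ST)_w(ST)_w^*\in\fJ_{k,l}$, which is precisely your one-line argument and presupposes that $[ST]_{k,l}$ is defect-free; in other words, the published proof is implicitly operating under the paper's standing interest in Cuntz-type representations. So your write-up is at least as complete as the paper's, and you have correctly isolated the one point where the theorem, read for an arbitrary isometric representation, would need an additional argument (that a self-adjoint $\fS$ forces defect-freeness). Do note, however, that your sketch of that missing step has a technical flaw as stated: the compressions $P_0T_jP_0$ of the $T_j$ to the coinvariant defect space are in general only contractions, not isometries, so ``the free semigroup algebra of the compressed row-isometry $[P_0T_jP_0]$'' is not well-defined, and the conclusion that this corner ``must be Cuntz'' does not follow from it being a corner of a von Neumann algebra. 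If you want the statement in that generality, that step needs a different argument; for the theorem as the paper intends it, what you have already suffices.
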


\begin{proof}
	Suppose that $I\in\fJ_{k,l}$. Then by Lemma \ref{DKP lemma} there are $A_w\in\fS$ such that
	\begin{equation*}
		I=\sum_{\substack{w\in\gr\\ d(w)=(k,l)}}(ST)_wA_w.
	\end{equation*}
	Hence
	\[
		S_{i_0}^*=\sum_{w=i_0w'} (ST)_{w'}A_w
	\]
	is in $\fS$. Similarly $T_j^*\in\fS$ for each
	$j$. Thus $\fS$ is self-adjoint.

	Conversely suppose that $\fS$ is self-adjoint. Then
	\begin{equation*}
		I=\sum_{d(w)=(k,l)}(ST)_w(ST)_w^*\in\fJ.\qedhere
	\end{equation*}
\end{proof}

Proposition \ref{thm: selfadj} now gives a sufficient condition for a nonself-adjoint $2$-graph algebra $\fS$ to be a von Neumann algebra based on the free semigroup
algebras $\fS_{k,l}$ contained in $\fS$.

\begin{cor}\label{S_kl selfadj}
	If there are $k,l>0$ such that $\fS_{k,l}$ is self-adjoint then $\fS$ is self-adjoint.
\end{cor}

\begin{proof}
	Since $\fS_{k,l}$ is self-adjoint it follows from \cite[Theorem 1.5]{DKP} that
	$I\in\fJ_{k,l}$. Hence $\fS$ is self-adjoint by Theorem \ref{thm: selfadj}.
\end{proof}

The condition that both $k>0$ and $l>0$ in the previous corollary is necessary. The following example shows this, as well as showing how pathological nonself-adjoint $2$-graph algebras can be. It should be noted that the following example arises from a representation of a periodic $2$-graph.

\begin{example}
Let $S_1$ and $S_2$ be isometries with pairwise orthogonal range such
that $\ol\alg^\wot\{I, S_1, S_2\}$
is a von Neumann algebra. Define operators
	\begin{align*}
		U_1=S_1\otimes I && U_2=S_2\otimes I\\
		V_1= S_1\otimes U && V_2=S_2\otimes U
	\end{align*}
where $U$ is a bilateral shift. Then $[U_1,U_2]$ and $[V_1, V_2]$ form a representation of the $2$-graph
$\gr$, where $\theta$ is the flip permutation. Clearly the free semigroup algebra generated by $U_1$ and $U_2$ is self-adjoint. On the other hand, the free semigroup generated by $V_1$ and $V_2$ is analytic and hence it has no singular part. The
nonself-adjoint $2$-graph algebra generated by $U_1,U_2,V_1,V_2$ is isomorphic to $\B(\H)\otimes H^\infty$, which is not self-adjoint.
\end{example}

%%%%%%%%%%%%%%%%%%%%%%%%%%%%ATOMIC REPRESENTATIONS%%%%%%%%%%%%%%%%%%%%%%%%%%%%%%%%%%%%%%
%%%%%%%%%%%%%%%%%%%%%%%%%%%%ATOMIC REPRESENTATIONS%%%%%%%%%%%%%%%%%%%%%%%%%%%%%%%%%%%%%%
\section{Atomic Representations}\label{sec: atomic}
\noindent We will now focus on atomic representations. We will see that even in this simple class of examples it is not necessarily possible to find wandering vectors. One may expect that the finitely correlated case would be the simplest, however in this case we are not guaranteed wandering vectors. In some cases, however, we will be able to specifically find wandering vectors.

To simplify our discussion,  throughout this section we assume that $m, n\ge 2$.  
We begin by recalling the definition of an atomic representation.

\begin{defn}
 A representation $(A,B)$ of a $2$-graph $\gr$ on $\H$ is \emph{atomic} if there is an orthonormal basis $\{\xi_k: k\geq 0\}$ for $\H$ such that given $w\in\gr$ and basis vector $\xi_i$ there is a scalar $\alpha\in\bT\cup\{0\}$ and a basis vector $\xi_j$ such that $(AB)_w\xi_i=\lambda_{w,i}\xi_j$. We call the basis $\{\xi_k: k\geq 0\}$ the \emph{standard basis} for the representation.
\end{defn}

The scalars in the above definition will not play an important role in our analysis. We will adopt the convention of writing $\ol{\xi}$ for $\{\lambda\xi:\ \lambda\in\bT\}$. Thus we will write $(AB)_w\ol{\xi_i}=\ol{\xi_j}$ to mean $(AB)_w\xi_i=\lambda_{w,i}\xi_j$.

An atomic representation determines a $2$-coloured directed graph with vertices $\{\ol{\xi_k}: k\geq 0\}$. We draw a blue edge from $\ol{\xi_i}$ to $\ol{\xi_j}$ if there is an $A_k$ so that $A_k\ol{\xi_i}=\ol{\xi_j}$. Similarly we draw a red edge from $\ol{\xi_i}$ to $\ol{\xi_j}$ if there is an $B_k$ so that $B_k\ol{\xi_i}=\ol{\xi_j}$. Thus a path from one vertex to another is determined by a word $w\in\gr$. We call this graph the \emph{graph of an atomic representation.}

Note that in the graph of a Cuntz-type atomic representation every vertex has exactly one red and one blue edge leading into it. From this we conclude that given $(k,l)$ every vertex has a \emph{unique} path $w$ of degree $(k,l)$ leading into it. Since a Cuntz-type representation is isometric we also have that each vertex necessarily has $m$ blue and $n$ red edges leading from it.

Atomic representations of free semigroup algebras were classified by Davidson and Pitts in \cite{DP}. The irreducible Cuntz-type representations were shown to be either of infinite tail type or of (finitely-correlated) ring type. In our setting, if $(S,T)$ is an atomic representation of $\gr$ then $S$ is an atomic representation of $\bF_m^+$ and $T$ is an atomic representation of $\bF_n^+$. In \cite{DPYatom} Davidson, Power and the second author classified the atomic representations of $2$-graphs. The irreducible Cuntz-type representations, broadly speaking, break into $3$ categories, based on the classifications of $S$ and $T$ separately. It is shown in \cite{DPYatom} that a Cuntz-type atomic representation necessarily has a cyclic coinvariant subspace $\V$ spanned by a subset of the standard basis for the representation associated to it. We summarise the different types of atomic representations and their corresponding cyclic coinvariant subspaces $\V$ here and refer the reader to \cite{DPYatom} (and \cite{DP}) for full details on the different types of atomic representations.

\hypersetup{bookmarksdepth=-3}
\subsection*{\texorpdfstring{Type 1}{}} In this case both $S$ and $T$ are ring type representations of free semigroups. In this instance $\V$ is finite dimensional. If we take a basis vector $\xi$ in $\V$ the there exist words $e_{u_0}\in\bF_m^+$ and $f_{v_0}\in\bF_n^+$ of minimal lengths so that
\[ S_{u_0}\ol\xi=\ol\xi=T_{v_0}\ol\xi. \]
Indeed $\V$ is determined by $e_{u_0}$, $f_{v_0}$ and $\xi$.

\subsection*{\texorpdfstring{Type 2}{}} In this case either $S$ is of ring type and $T$ is of infinite tail type (type 2a) or $T$ is of ring type and $S$ is of infinite tail type (type 2b).

A type 2a representation $(S,T)$ is determined by a word blue $u_0$, an infinite red tail $v_0=j_{0,−1}j_{0,−2}\dots$ and a sequence $0 > t_1 > t_2 > \ldots$ such that
	\[e_{u_0}f_{v_0(0,t_k]}=f_{v_0(0,t_k]}e_{u_0},\]
where $f_{v_0(0,t_k]}=j_{0,−1}j_{0,−2}\ldots j_{0,−t_k}$. There is a standard basis vector $\xi_0$ in
$\H$ such that $S_{u_0}\ol{\xi_0}=\ol{\xi_0}$ and, the red path of length $k$ into $\ol{\xi_0}$ is 
$f_{v_0(0,t_k]}$. The cyclic coinvariant subspace $\V$ is the minimal coinvariant subspace for $(S,T)$ containing $\xi_0$. The type 2b representations are determined similarly.

\subsection*{\texorpdfstring{Type 3}{}} Finally, for type 3 representations, both $S$ and $T$ are of infinite tail type. Type 3 representations break into 3 further sub-types: type 3a
(or inductive limit representations), type 3b(i) and type 3b(ii). In type 3a representations  it is not hard to see that every standard basis vector is wandering. We will not dwell on this type further.

For type 3b(i) case the cyclic coinvariant subspace $\V$ is described by a vector $\xi_0$ and words $u_{-1}$ of length $k$ and $v_{1}$ of length $l$ such that
	\[ S_{u_{-1}}T_{v_{1}}\ol{\xi_0}=\ol{\xi_0}. \]
We let $\ol{\xi_{-1}}=T_{v_{1}}\ol{\xi_0}$ and define $u_{-2}$ and $v_{2}$ by $S_{u_{-2}}T_{v_{2}}=T_{v_1}S_{u_{-1}}$ so that
	\[ S_{u_{-2}}T_{v_{2}}\ol{\xi_{-1}}=\ol{\xi_{-1}}. \]
Repeating this process we define basis vectors $\xi_{-n}$ and words $u_{-n}$ of length $k$ and $v_n$ of length $l$ so that
	\[ S_{u_{-n}}T_{v_{n}}\ol{\xi_{-n}}=\ol{\xi_{-n}}. \]
Similarly we define words $u_0$ and $v_{0}$ by $T_{v_{-1}}S_{u_1}=S_{u_{-1}}T_{v_{1}}$ and define $\xi_1$ by
$\ol{\xi_1}=S_{u_1}\ol{\xi_0}$. We continue in this manner to find  $\xi_{n}$ and words $u_{n}$ of length $k$ and $v_{-n}$ of length $l$ so that
	\[ T_{v_{-(n-1)}}S_{u_{(n-1)}}\ol{\xi_{n}}=\ol{\xi_{n}}. \]
Then $\V$ is the coinvariant space containing $\{\xi_n\}_{-\infty}^\infty$.

For the type 3b(ii) case, $\V$ is determined by an two infinite tails $\tau_e=u_0u_{-1}u_{-2}\ldots$ and
$\tau_f=v_0v_{-1}v_{-2}\ldots$, where $u_d\in\bF_m^+$, $|u_d|=k$ and $v_d\in\bF_n^+$, $|v_d|=l$ satisfying 
\[ f_{v_{d+1}}e_{u_d}=e_{u_{d+1}}f_{v_d}. \]
There is a basis vector $\xi_{0}$, which satisfies, \[T_{v_0v_{-1}v_{-2}\ldots v_{-t}}^*\ol{\xi_0}=S_{u_0u_{-1}u_{-2}\ldots u_{-t}}^*\ol{\xi_0},\]
for $t\ge 0$. The cyclic coinvariant subspace $\V$ is the minimal coinvariant subspace for $(S,T)$ containing $\xi_0$. 
\\[0.25in] %Add space after section

\hypersetup{bookmarksdepth}%back to tocdepth
\noindent We fix a nonself-adjoint $2$-graph algebra $\fS$ generated by an irreducible Cuntz-type atomic representation $(S,T)$ of $\gr$ on $\H$. We denote by $\V$ is corresponding cyclic coinvariant subspace as described above. We wish to determine when $\fS$ has a wandering vector. Note that if a vector $\eta$ in the standard basis is not a wandering vector then one of the following must be satisfied:
\begin{align*}
 \tag{W1}\label{w1}\<S_u\eta,\eta\>&\neq0\text{ for some }u\in\bF_m^+,\ u\neq\mt,\\
 \tag{W2}\label{w2}\<T_v\eta,\eta\>&\neq0\text{ for some }v\in\bF_n^+,\ v\neq\mt,\\
 \tag{W3}\label{w3}\<T_vS_u\eta,\eta\>&\neq0\text{ for some }u\in\bF_m^+,\ v\in\bF_n^+, u,v\neq\mt,\\
\intertext{or}
 \tag{W4}\label{w4}\<S_u\eta,T_v\eta\>&\neq0\text{ for some }u\in\bF_m^+,\ v\in\bF_n^+,\ u,v\neq\mt.
\end{align*}
These conditions can be restated as
\begin{align*}
 \tag{W1$'$}\label{w1'}S_u\ol\eta&=\ol\eta\text{ for some }u\in\bF_m^+,\ u\neq\mt,\\
 \tag{W2$'$}\label{w2'}T_v\ol\eta&=\ol\eta\text{ for some }v\in\bF_n^+,\ u\neq\mt,\\
 \tag{W3$'$}\label{w3'}T_vS_u\ol\eta&=\ol\eta\text{ for some }u\in\bF_m^+,\ v\in\bF_n^+,\ u,v\neq\mt,\\
\intertext{and}
 \tag{W4$'$}\label{w4'}S_u\ol\eta&= T_v\ol\eta\text{ for some }u\in\bF_m^+,\ v\in\bF_n^+,\ u,v\neq\mt,
\end{align*}
respectively.

In the following lemma we note that not all $4$ conditions above can be satisfied in each type of atomic representations.

\begin{lem}\label{lem: wandering conditions}
	Let $(S,T)$ be an atomic representation and let $\eta$ be a standard basis vector. Then $\eta$
	satisfies condition (W$I$), for $I=1,2,3,4$, if and only if (W$I$) is satisfied by a standard
	basis vector in $\V$.

	That is, condition (W1) can only happen in type 1 and type 2a representations; condition (W2)
	can only happen in type 1 and type 2b representations; condition (W3) can only happen in type 1 and type 3b(i) representations; and condition (W4) can only happen in type 1 and type
	3b(ii) representations.
\end{lem}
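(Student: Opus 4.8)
The backward implication is immediate, since a standard basis vector lying in $\V$ is in particular a standard basis vector of $\H$. For the forward implication the plan is to transport a relation holding at an arbitrary standard basis vector $\eta$ back to one holding at a basis vector in $\V$. The essential tool is that $\V$ is cyclic and coinvariant: because $(S,T)$ is of Cuntz-type and atomic, every standard basis vector $\ol\eta$ can be written as $\ol\eta=(ST)_w\ol\xi$ for some standard basis vector $\xi\in\V$ and some $w\in\gr$, and then $(ST)_w^*\ol\eta=\ol\xi$ since $(ST)_w$ is an isometry. Thus each basis vector is obtained from one of $\V$ by a path, and conversely traces back into $\V$ along the adjoint action.

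I would treat the monochromatic conditions \eqref{w1'} and \eqref{w2'} first, as they depend on only one of the tuples. Condition \eqref{w1'}, $S_u\ol\eta=\ol\eta$, is a statement purely about the atomic representation $S$ of $\bF_m^+$; by the Davidson--Pitts classification \cite{DP} the existence of such a blue cycle forces $S$ to be of ring type, and by the classification of irreducible Cuntz-type atomic representations of $\gr$ in \cite{DPYatom} the tuple $S$ is homogeneous, so this occurs exactly in types $1$ and $2$a. In those types every standard basis vector sits on a blue cycle, in particular the distinguished vector $\xi_0\in\V$ with $S_{u_0}\ol{\xi_0}=\ol{\xi_0}$, so \eqref{w1'} holds in $\V$; in all other types $S$ is of infinite-tail type and there is no blue cycle at all. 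The argument for \eqref{w2'} is identical with the roles of $S,\bF_m^+$ and $T,\bF_n^+$ interchanged, yielding types $1$ and $2$b.

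For the mixed conditions \eqref{w3'} and \eqref{w4'} I would use the reachability above to reduce to a relation based at a basis vector of $\V$, and then read off the answer from the explicit descriptions of $\V$. Writing $w=e_{u'}f_{v'}$ and applying the length-preserving commutation relations of $\gr$ to move the loop word $f_ve_u$ (resp.\ to compare the two paths $e_u$ and $f_v$) past $w$, the relation at $\eta$ conjugates to one of the same degree $(|u|,|v|)$ at a basis vector of $\V$. Matching against \cite{DPYatom} then identifies the types: in type $3$b(i) the defining relation $S_{u_{-1}}T_{v_1}\ol{\xi_0}=\ol{\xi_0}$ becomes, after a single commutation, an instance of \eqref{w3'}; in type $3$b(ii) the coincidence $T_{v_0v_{-1}\cdots v_{-t}}^*\ol{\xi_0}=S_{u_0u_{-1}\cdots u_{-t}}^*\ol{\xi_0}$ produces a vector $\ol\zeta\in\V$ with $S_{u_0\cdots}\ol\zeta=T_{v_0\cdots}\ol\zeta$, an instance of \eqref{w4'}; type $1$, where both $S_{u_0}\ol\xi=\ol\xi$ and $T_{v_0}\ol\xi=\ol\xi$ hold, realizes all four conditions; and type $3$a has every standard basis vector wandering, so realizes none. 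The converse restrictions follow by inspecting the remaining cases: the infinite-tail data defining types $2$a, $2$b, and $3$b(ii) (resp.\ $2$a, $2$b, and $3$b(i)) admits no mixed loop realizing \eqref{w3'} (resp.\ \eqref{w4'}), as such a loop would impose on the relevant tail a finite repetition of the periodic shape in Theorem \ref{thm: periodic cond} that the tail does not possess.

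The main obstacle is the transport step for \eqref{w3'} and \eqref{w4'}. Unlike the monochromatic case, where the condition depends on a single tuple and propagates uniformly to every basis vector, here one must commute a word involving both colours past the connecting path $(ST)_w$ and verify that the resulting relation is based at a genuine standard basis vector of $\V$ rather than degenerating. This requires careful use of the unique factorisation $w=e_{u'}f_{v'}$ together with the fact, recorded just after the graph of an atomic representation is introduced, that in a Cuntz-type atomic graph each vertex has a unique incoming path of each prescribed degree; tracking these incoming paths is precisely what pins the transported relation to a vector of $\V$.
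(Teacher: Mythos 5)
Your overall skeleton---transport the condition from an arbitrary standard basis vector back to $\V$, then read off from the explicit description of $\V$ in each type---matches the paper's, and you correctly name uniqueness of incoming paths as the key tool. But the transport step, which you yourself flag as the main obstacle and which is the entire content of the paper's proof, is never actually carried out, and the mechanism you propose for it fails. If $T_vS_u\ol\eta=\ol\eta$ and $\ol\eta=S_{u'}T_{v'}\ol\xi$ with $\xi\in\V$, then commuting the loop word past the connecting word yields $S_\alpha T_\beta\ol\xi=S_{u'}T_{v'}\ol\xi$ with $|\alpha|=|u|+|u'|$ and $|\beta|=|v|+|v'|$: an identity between two paths of \emph{different} degrees emanating from $\xi$, not a loop of degree $(|u|,|v|)$ based at a vector of $\V$. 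What the paper does instead is an induction: for \eqref{w4} it pulls $\zeta$ back one blue block of length $p$ at a time along the connecting path $S_rT_s\ol\eta=\ol\zeta$; at each step the uniqueness of the blue path of length $p$ into the auxiliary vertex $\ol\omega=T_v\ol\zeta$ forces the pulled-back vector $\zeta'$ to satisfy $S_{u'}\ol{\zeta'}=T_{v'}\ol{\zeta'}$ again, and after finitely many pullbacks one lands exactly on $\eta\in\V$. For \eqref{w3} the paper does not transport to $\V$ at all: from one vector satisfying \eqref{w3} it constructs a bi-infinite sequence $\{\zeta_n\}$ realizing the defining relations of type 3b(i), and concludes the representation is of type 3b(i), or of type 1 if the sequence repeats. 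Neither argument is recoverable from your outline.

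Several of your supporting claims are also wrong or circular. The assertion that in types 1 and 2a \emph{every} standard basis vector sits on a blue cycle is false: basis vectors on the dilation trees off the central cycle lie on no cycle at all (this is harmless where you use it, but it shows the structure outside $\V$ is being glossed over, and it is precisely that structure the lemma is about). The appeal to Theorem \ref{thm: periodic cond} to exclude mixed loops in the tail types is a category error: that theorem characterizes periodicity of the $2$-graph $\gr$ itself, a property of the permutation $\theta$, and says nothing about the tails of a particular representation; the correct exclusion comes either from the transported loop contradicting the aperiodicity of the defining tails of an irreducible type 2 or type 3 representation, or, as in the paper, from the type identification built into the \eqref{w3} argument. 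Finally, the ``homogeneity'' of $S$ that you quote from \cite{DPYatom} to settle \eqref{w1} and \eqref{w2} is essentially equivalent to the statement being proven (that a blue cycle at a single basis vector forces the representation into types 1 or 2a); unless you can point to a precise statement of it in \cite{DPYatom}, invoking it is circular. The paper instead proves this directly: the loop $S_{u^n}$ at $\zeta$ identifies the blue part of the connecting word from $\V$, and uniqueness of the red path of length $n$ into $\zeta$ then transfers the blue cycle to a vector $\xi\in\V$---again the commutation-plus-uniqueness transport that your proposal omits.
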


\begin{proof}
	Let $(S,T)$ be an atomic representation of $\gr$ on $\H$, with cyclic coinvariant subspace $\V$. 
	Take any $p,q>0$, from \cite[Theorem 3.12]{Ful} and \cite[Lemma 5.6]{DPYdiln}, it follows that 
	\begin{equation*}
		\H=\bigvee_{\substack{n\geq0\\ d(w)=(np,nq)}}(ST)_w\V,
	\end{equation*}
	and for any standard basis vector $\zeta$ there is an  $n\geq0$, $w\in\gr$ with $d(w)=(np,nq)$, 	and a standard basis vector $\xi$ in $\V$ such that
	\begin{align}
	\label{E:xizeta} 
	(ST)_w\ol{\xi}=\ol{\zeta}. 
	\end{align}

	Suppose that there is a vector $\zeta$ in $\V^\perp$ satisfying (\ref{w1}). Hence, there is a 
	$u$ of length $p$ such that $S_u\ol{\zeta}=\ol{\zeta}$. 
	%By the above argument, 
	Taking $q=1$ in the above gives (\ref{E:xizeta}). 
	% \[ (ST)_w\ol{\xi}=\ol{\zeta}. \]
	As $u^n$ ($u$ concatenated with itself $n$ times) is the unique path of length $np$ into $\zeta$
	and $S_{u^n}\ol{\zeta}=\ol{\zeta}$ it follows that there is a red path $v$ of length $n$ such 		that \[ T_v\ol{\xi}=\ol{\zeta}. \]
	It follows that $S_uT_v\ol{\xi}=\ol{\zeta}$.
	By the commutation relations there is a blue path $u'$ of length $p$ and a red path $v'$ of
	length $n$ such that $S_uT_v\ol{\xi}=T_{v'}S_{u'}\ol{\xi}$. By the uniqueness of the red path of
	length $n$ into $\xi$, it follows that $v'=v$ and \[S_{u'}\ol{\xi}=\ol{\xi}.\]
	We have shown that there is a standard basis vector in $\V$ satisfying (\ref{w1}). Condition 		(\ref{w2}) is dealt with similarly.

	Now, suppose that there is a standard basis vector $\zeta$ in $\V^\perp$ satisfying (\ref{w4}). 	Then there exists a blue word $u$ of length $p$ and red path  $v$ of length $q$ such that
	\[ S_u\ol{\zeta}=T_v\ol{\zeta}. \]
	Let $\omega$ be the standard basis vector such that $\ol\omega=T_v\ol{\zeta}$.
	Now, by the argument above, there is a standard basis vector $\eta$ in $\V$, an $n>0$, a blue
	word $r$ of length $np$ and a red word $s$ of length $nq$ such that $S_rT_s\ol{\eta}=\ol{\zeta}$.
	Let $\zeta'$ be the standard basis vector found by pulling back from $\zeta$ by a blue path of
	length $p$. That is, there is a $u'$ of length $p$ such that
	\[ 
	S_{u'}\ol{\zeta'} =\ol\zeta. 
	\]
	The commutation relations tell us that
	\begin{equation*}
		\ol{\omega}=T_vS_{u'}\ol{\zeta'}=S_{u''}T_{v'}\ol{\zeta'}.
	\end{equation*}
	By the uniqueness of the blue path into $\omega$ it follows that $u''=u$ and that
	$T_{v'}\ol{\zeta'}=\ol\zeta$. Hence $\zeta'$ also satisfies (\ref{w4}). By construction there is 
	blue word $r'$ of length $(n-1)p$ so that $r=u'r'$. Now  $S_{r'}T_s\ol{\eta}=\ol{\zeta'}$. 
	Continuing this process of pulling back from a standard basis vector satisfying (\ref{w4}), we
	find that $\eta$ must satisfy (\ref{w4}).

	Finally, suppose that there is a standard basis vector $\zeta_0$ satisfying (\ref{w3}).
	Hence there are words $u_{-1}$ of length $k$ and $v_{1}$ of length $l$ such that
	\[ S_{u_{-1}}T_{v_{1}}\ol{\zeta_0}=\ol{\zeta_0}. \]
	We let $\ol{\zeta_{-1}}=T_{v_{1}}\ol{\zeta_0}$ and define $u_{-2}$ and $v_{2}$ by
	$S_{u_{-2}}T_{v_{2}}=T_{v_1}S_{u_{-1}}$ so that
	\[ S_{u_{-2}}T_{v_{2}}\ol{\zeta_{-1}}=\ol{\zeta_{-1}}. \]
	Repeating this process we define basis vectors $\xi_{-n}$ and words $u_{-n}$ of length $k$ and 		$v_n$ of length $l$ so that
	\[ S_{u_{-n}}T_{v_{n}}\ol{\zeta_{-(n-1)}}=\ol{\zeta_{-(n-1)}}. \]
	Similarly we define words $u_0$ and $v_{0}$ by $T_{v_0}S_{u_0}=S_{u_{-1}}T_{v_{1}}$ and 		define $\zeta_1$ by
	$\ol{\zeta_1}=S_{u_0}\ol{\zeta_0}$. We continue in this manner to find  $\zeta_{n}$ and words
	$u_{n}$ of length $k$ and $v_{-n}$ of length $l$ so that
	\[ T_{v_{-n}}S_{u_{n}}\ol{\zeta_{n}}=\ol{\zeta_{n}}. \]
	If the sequence $\{\zeta_{n}\}$ has no repetition, then
	 $S$ is of infinite tail type and $T$ is of infinite tail type. The above 
	relations show that $(S,T)$ is of type 3b(i).

	Finally, if $\{\zeta_{n}\}$ has repetition then eventually there a finite set of vectors 
	which repeat. The coinvariant space containing these vectors defines a type 1 representation.
\end{proof}

The above lemma, in particular, shows that (\ref{w4}) can not happen in types 2 and 3b(i). This will make it easy to show that we always have wandering vectors for types 2 and 3b(i). 

\begin{prop}
	Let $(S,T)$ be an atomic representation of type 2 or type 3b(i) of a $2$-graph $\gr$ on a Hilbert
	space $\H$. Then $(S,T)$ has wandering vectors.
\end{prop}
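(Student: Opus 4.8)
The plan is to use the observations preceding Lemma \ref{lem: wandering conditions} to reduce the proposition to a purely combinatorial statement about the graph of the representation, and then to exhibit a vertex lying ``off the cyclic part.'' Recall from the remark before Lemma \ref{lem: wandering conditions} that a standard basis vector is wandering as soon as it fails all four conditions \eqref{w1}--\eqref{w4}. By Lemma \ref{lem: wandering conditions}, for the three subtypes at hand the situation collapses: condition \eqref{w4} never holds, and among \eqref{w1}, \eqref{w2}, \eqref{w3} exactly one can occur, namely \eqref{w1} in type 2a, \eqref{w2} in type 2b, and \eqref{w3} in type 3b(i). Moreover, rewriting $T_vS_u$ in the normal form $S_{u'}T_{v'}$ by the commutation relations, each of \eqref{w1}, \eqref{w2}, \eqref{w3} merely asserts the existence of a nontrivial $w\in\gr$ with $(ST)_w\ol\eta=\ol\eta$, i.e. that $\eta$ lies on a directed cycle in the graph of $(S,T)$. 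Thus it suffices to produce a single standard basis vector lying on no cycle: such a vector fails all of \eqref{w1}--\eqref{w4} and is therefore wandering.

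First I would dispatch types 2a and 2b, where the cycle in question is monochromatic. In type 2a the blue tuple $S$ is, by the classification in \cite{DPYatom, DP}, a ring-type atomic representation of $\bF_m^+$: its graph is a single blue cycle with an infinite $m$-ary blue tree attached at each cycle vertex. Since $m\ge 2$ and $(S,T)$ is of Cuntz type, at every cycle vertex at least one outgoing blue edge leaves the cycle, so tree vertices exist; following blue edges forward from a tree vertex strictly increases its depth and never returns to it (the unique-blue-parent property makes the backward orbit deterministic, so a blue return would be a genuine loop, and the tree part carries none). Hence a blue tree vertex fails \eqref{w1}, and by Lemma \ref{lem: wandering conditions} it fails \eqref{w2}, \eqref{w3}, \eqref{w4} as well, so it is wandering. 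Type 2b is identical with the roles of blue and red interchanged.

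The main obstacle is type 3b(i), where the relevant cycles are genuinely bichromatic. Here both $S$ and $T$ are of infinite-tail type, and the cyclic vertices include the spine $\{\xi_n\}$ carrying the mixed loops $S_{u_{-n}}T_{v_n}\ol{\xi_{-n}}=\ol{\xi_{-n}}$ of degree $(k,l)$. The cleanest route is to pass to the single row-isometry $[ST]_{k,l}$, a defect-free atomic representation of $\bF_N^+$ with $N=m^kn^l\ge 2$, for which each $\xi_n$ is a ring (loop) vertex; the Davidson--Pitts ring/tree dichotomy then furnishes $[ST]_{k,l}$-tree vertices, which admit no degree-$(k,l)$ return. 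The one genuinely structural step, which I expect to be the crux, is to upgrade ``no degree-$(k,l)$ return'' to ``no return of any degree,'' i.e. to rule out a return $(ST)_w\ol\eta=\ol\eta$ with $d(w)\notin\bN(k,l)$. This amounts to showing that every directed cycle of the graph is confined to the spine, so that the cyclic vertices of $(S,T)$ coincide with those of $[ST]_{k,l}$; granting this (which follows from the explicit description of a type 3b(i) representation in \cite{DPYatom}, using that each vertex has a unique blue and a unique red parent so that any return is a genuine loop), a $[ST]_{k,l}$-tree vertex lies on no cycle of $(S,T)$ and is therefore wandering. The remaining steps are immediate from Lemma \ref{lem: wandering conditions} together with the branching forced by $m,n\ge 2$.
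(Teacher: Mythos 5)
Your reduction of the proposition to exhibiting a standard basis vector lying on no directed cycle is correct, and your treatment of types 2a and 2b is sound; it is essentially the paper's own argument, since the paper's chosen vector $S_{u'}\xi$ (with $S_u\ol\xi=\ol\xi$, $|u'|=|u|$, $u'\neq u$) is precisely an off-ring blue tree vertex, and Lemma \ref{lem: wandering conditions} disposes of \eqref{w2}, \eqref{w3} and \eqref{w4} exactly as you say.

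The gap is in type 3b(i), at the step you yourself flag as the crux and then do not prove. The statement that ``the cyclic vertices of $(S,T)$ coincide with those of $[ST]_{k,l}$'' is exactly what must be established, and the justification you offer --- that each vertex has a unique blue and a unique red parent, so that any return is a genuine loop --- cannot suffice: unique monochromatic parents hold in \emph{every} Cuntz-type atomic representation, in particular in type 1 representations, where the analogous confinement of cycles to the core fails. (In Example \ref{ex: aperiodic}, the vertex $T_2\xi_\mt$ lies in $\V^\perp$ yet carries the blue loop $S_1T_2\ol{\xi_\mt}=T_2\ol{\xi_\mt}$, a cycle entirely outside $\V$.) So an argument specific to type 3b(i) is required. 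A correct route is: (a) returns propagate backwards --- if a vertex $x$ has a return of degree $(p,q)$ and there is a path from $y$ to $x$, then uniqueness of the incoming path of each degree forces $y$ to have a return of the same degree $(p,q)$; since every vertex shares a common ancestor with the spine (every vertex is in the forward orbit of $\V$, and every $\V$-vertex has a path into the spine), some single vertex then carries returns of degrees $(p,q)$ and $(k,l)$; and (b) if one vertex carries returns of two non-proportional degrees, then factoring the unique incoming path of a suitable common iterate through the smaller return produces a return of degree $(a,0)$ or $(0,b)$, i.e.\ a monochromatic ring, which is impossible in type 3b(i) because both $S$ and $T$ are of infinite tail type. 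Together (a) and (b) force every return degree to be proportional to $(k,l)$, and iterating then yields a return of degree in $\bN(k,l)$, contradicting your choice of a tree vertex for $[ST]_{k,l}$. Without this, your tree vertex is not shown to fail \eqref{w3}. Note that the paper sidesteps the global claim entirely by taking its candidate vector one blue step off the spine, $S_{u'}\xi$ with $u'\neq u$, so that the exclusion of \eqref{w3} is a local statement checked against the spine relations; your choice of an arbitrary $[ST]_{k,l}$-tree vertex makes the exclusion you need strictly stronger than the one the paper verifies.
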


\begin{proof}
	Suppose that $(S,T)$ is of type 2a. Let $\V$ be the corresponding cyclic coinvariant subspace. 		Choose a standard basis vector $\xi$ in $\V$. Then there is a $u$ of length $k$ such that
	\[ S_u\ol\xi=\ol\xi. \]
	Let $u'$ be a blue word of length $k$, such that $u'\neq u$. Then $S_{u'}\xi$ is in $\V^\perp$. 
	Clearly, $S_{u'}\xi$ does not satisfy (\ref{w2}) or (\ref{w3}). 
	As $S_{u'}\xi$ can not satisfy (\ref{w1}), it follows from Lemma \ref{lem: wandering conditions}
	that $S_{u'}\xi$ is a wandering vector for the representation $(S,T)$. Type 2b is dealt with
	similarly.

	Now suppose that $(S,T)$ is of type 3b(i), with corresponding cyclic coinvariant subspace
	$\V$. If we choose a standard basis vector $\xi$ in $\V$, we can find a blue word $u$ of length 	$k$ and a red word $v$ of length $l$ so that \[T_vS_u\ol\xi=\ol\xi.\]
	Again, if we take a blue word $u'$ of length $k$ where $u'\neq u$, then $S_{u'}\xi$ lies in 
	$\V^\perp$ and can not satisfy (\ref{w3}). 
	It is also easy to see that $S_{u'}\xi$ does not satisfy (\ref{w1}) or (\ref{w2}).
	Hence $S_{u'}\xi$ is a wandering vector for the representation $(S,T)$.
\end{proof}

Condition (\ref{w4}) has proven to be the most difficult to deal with. While the above argument shows that if $\zeta$ satisfies (\ref{w4}) we can pull back to a vector $\zeta'$ which also satisfies (\ref{w4}) with words of the same length, it tells us nothing about going forward. We will see that periodicity of $\gr$ can play a role here. 

\begin{example}\label{ex: periodic}
	Let $A=[A_1,A_2]$ be the $1$-dimensional representation of $\bF_2^+$ on $\spn\{\xi_\mt\}$ given by
	\begin{equation*}
		A_1\xi_\mt=\xi_\mt\text{ and } A_2\xi_\mt=0.
	\end{equation*} 
	Now let $S$ be the minimal isometric dilation of $A$. Since $A$ is an atomic representation of
	$\bF_2^+$, $S$ will be an atomic representation. If we let $T=S$ then, by
	Proposition \ref{prop: fsg=2 graph alg}, $(S,T)$ forms a representation of a periodic $2$-graph.
	Whilst $S$ has many wandering vectors as a representation of $\bF_2^+$ (see \cite{DKS}), since
	$S=T$ we can not have any wandering vectors in the $2$-graph sense. 
\end{example}

This example is typical of what happens in every type 1 or type 3b(ii) representation when $\gr$ is periodic. Indeed, we will conclude that there are no wandering vectors when $\gr$ is periodic by showing that the row-isometries $[S_u: |u|=a]$ and $[T_v: |v|=b]$ are equal for some $a,b>0$. 

\begin{prop}\label{prop: periodic no wandering}
	Let $(S,T)$ be an irreducible type 1 or type 3b(ii) atomic representation of a periodic $2$-graph $\gr$. Then $(S,T)$ has no wandering vectors.
\end{prop}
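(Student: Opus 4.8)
The plan is to extract from periodicity a single unitary that intertwines the two ``directions'' of the representation, and to show that for the two types in question this unitary is forced to be a scalar; the non-existence of wandering vectors is then immediate. By Theorem \ref{thm: periodic cond} periodicity provides a period $(a,-b)$ and a bijection $\gamma$ with $e_uf_v=f_{\gamma(u)}e_{\gamma^{-1}(v)}$, i.e. the operator identities $S_uT_v=T_{\gamma(u)}S_{\gamma^{-1}(v)}$ for all $|u|=a$, $|v|=b$. Summing over $|v|=b$ and using that $(S,T)$ is Cuntz-type (so $\sum_{|v|=b}T_vT_v^*=I$) yields $S_u=T_{\gamma(u)}\Omega$ for every $u$, where $\Omega:=\sum_{|u'|=a}S_{u'}T_{\gamma(u')}^*$; a one-line computation shows $\Omega^*\Omega=\Omega\Omega^*=I$, so $\Omega$ is unitary and in particular $S_uS_u^*=T_{\gamma(u)}T_{\gamma(u)}^*$, so the ranges of the two row-isometries already agree. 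Granting that $\Omega$ is a unimodular scalar, the proof finishes at once: for a putative wandering vector $\zeta$ and any $|u|=a$, the elements $e_u$ and $f_{\gamma(u)}$ of $\gr$ are distinct (their degrees $(a,0)$ and $(0,b)$ differ since $a,b>0$), whence the wandering condition gives $0=\<S_u\zeta,T_{\gamma(u)}\zeta\>=\<T_{\gamma(u)}\Omega\zeta,T_{\gamma(u)}\zeta\>=\<\Omega\zeta,\zeta\>$, contradicting $\<\Omega\zeta,\zeta\>=\lambda\neq0$.

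The task is thus reduced to showing that $\Omega$ is scalar for types 1 and 3b(ii), and here I would reformulate everything combinatorially on the graph of the atomic representation. Since each $S_{u'}$ and $T_{\gamma(u')}$ sends standard basis rays to standard basis rays, $\Omega$ permutes the rays $\{\ol{\xi_k}\}$; being unitary it induces a genuine permutation $\sigma$, and $\Omega$ is scalar precisely when $\sigma$ is the identity together with an alignment of the attached unimodular phases. One checks that $\sigma$ fixes a ray $\ol\eta$ exactly when $S_u\ol\eta=T_{\gamma(u)}\ol\eta$ for all $|u|=a$, that is, when $\ol\eta$ satisfies the uniform form of condition \eqref{w4} with $v=\gamma(u)$. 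So the goal becomes: condition \eqref{w4} holds at every standard basis vector in this uniform way.

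The heart of the argument is to establish \eqref{w4} everywhere from the defining data of $\V$. In type 1 the loops $S_{u_0}\ol\xi=\ol\xi=T_{v_0}\ol\xi$ give the relation $S_{u_0}\ol\xi=T_{v_0}\ol\xi$ on $\V$, and in type 3b(ii) the identity $S^*_{u_0\cdots u_{-t}}\ol{\xi_0}=T^*_{v_0\cdots v_{-t}}\ol{\xi_0}$ gives $S_{u_0\cdots u_{-t}}\ol\zeta=T_{v_0\cdots v_{-t}}\ol\zeta$ for the common pre-image $\ol\zeta$; in both cases a \eqref{w4}-type relation is present on the cyclic coinvariant subspace $\V$. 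I would then propagate it to all of $\H$: the backward direction is already supplied by the pull-back computation in the proof of Lemma \ref{lem: wandering conditions}, while the forward direction is where periodicity is essential, using $S_uT_v=T_{\gamma(u)}S_{\gamma^{-1}(v)}$ together with the fundamental commutation relation to transport a \eqref{w4} relation across a single blue or red edge, so that by cyclicity of $\V$ the relation holds at every standard basis vector and $\sigma=\id$. The main obstacle is exactly this forward propagation together with the scalar bookkeeping: one must verify that transporting \eqref{w4} forward closes up consistently and that the resulting unimodular phases align to make $\Omega$ an honest scalar rather than merely diagonal, the bi-infinite type 3b(ii) case being the most delicate since there the relation on $\V$ is only asymptotic and the structure of the tails $\tau_e,\tau_f$ from \cite{DPYatom, DY1} must be invoked. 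I expect periodicity to be used in an essential way: for an aperiodic graph no bijection $\gamma$ exists, and for types 2 and 3b(i) condition \eqref{w4} fails on $\V$ by Lemma \ref{lem: wandering conditions}, so $\Omega$ is not scalar there, consistent with the fact proved above that those representations do possess wandering vectors.
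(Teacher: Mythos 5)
Your opening reduction is correct, and it is a genuinely clean repackaging of the paper's goal: periodicity plus the defect-free (Cuntz) condition do give $S_u=T_{\gamma(u)}\Omega$ for all $|u|=a$, with $\Omega=\sum_{|u'|=a}S_{u'}T_{\gamma(u')}^*$ unitary, and if $\Omega$ were a unimodular scalar then the nonexistence of wandering vectors would indeed follow in one line. (What the paper actually proves is that the row-isometries $[S_u:|u|=a]$ and $[T_{\gamma(u)}:|u|=a]$ coincide, which is exactly the statement that your $\Omega$ is trivial.) The problem is that everything after this reduction --- which is the entire content of the paper's proof --- is left unexecuted. Two concrete steps are missing. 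First, an \emph{alignment} step that you never address: the relations supplied by the type 1 and type 3b(ii) data on $\V$ have the form $S_u\ol\eta=T_v\ol\eta$ (after passing to suitable multiples so that the word lengths match the period), with no a priori link between $v$ and $\gamma(u)$. The paper's first and crucial step is to prove that any such relation forces $v=\gamma(u)$, by pulling back along the unique incoming blue and red paths of a Cuntz-type atomic representation and invoking Theorem \ref{thm: periodic cond}; without this, your transport identity $S_{u'}T_{\gamma(u)}=T_{\gamma(u')}S_u$ cannot even be applied to the relations you start from. Second, the forward propagation itself and the spanning argument (every standard basis vector of $\V$ is an image $S_u\ol\eta=T_{\gamma(u)}\ol\eta$ of a basis vector of $\V$; propagation preserves the uniform form of \eqref{w4}; and $\H=\bigvee_{d(w)=(a,b)}(ST)_w\V$ by \cite[Theorem 3.12]{Ful}) are only named as ingredients, and you explicitly defer them as ``the main obstacle.'' Identifying the heart of the argument as an obstacle and listing plausible tools for it is a plan, not a proof.

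Moreover, your particular finish carries a burden that you flag but do not resolve, and which does not disappear on its own: the combinatorial propagation can only show that $S_{u'}$ and $T_{\gamma(u')}$ agree on every standard basis \emph{ray}, which makes $\Omega$ a diagonal unitary, $\Omega\xi_k=\mu_k\xi_k$, and nothing more. A diagonal unitary whose eigenvalues do not lie in an open half-plane has $0$ in its numerical range, so $\<\Omega\zeta,\zeta\>$ can vanish for a non-basis vector $\zeta$, and your one-line conclusion genuinely requires $\Omega$ scalar. Irreducibility of $(S,T)$ cannot be invoked directly, since $\Omega$ lies in $C^*(S,T)$ rather than in its commutant; one can check from $S_u=T_{\gamma(u)}\Omega$ that $\Omega$ commutes with every $S_x$, $|x|=a$, and every $T_y$, $|y|=b$, but the commutant of the subalgebra these generate need not be trivial, so making this precise is additional work roughly on the scale of the propagation argument itself. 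The paper's route sidesteps this by concluding equality of the two row-isometries as operators and then arguing as in Example \ref{ex: periodic}. So the gap is twofold: the propagation argument with its $v=\gamma(u)$ alignment step, and the diagonal-versus-scalar distinction that your chosen endgame requires.
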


\begin{proof}
	Suppose that $\gr$ has $(a,-b)$-periodicity. Let $\gamma$ be the bijection defined in Theorem 		\ref{thm: periodic cond}. Then it also has $(pa,-pb)$-periodicity for any
	non-negative integer $p$.
	Recall that in the type 1 case we have that $\V$ is determined by words $u_0$ and $v_0$ with
	$|u_0|=k$ and $|v_0|=l$ and a basis vector $\xi$ such that
	\[ S_{u_0}\ol\xi=T_{v_0}\ol\xi. \]
	In the type 3b(ii) case $\V$ is determined by $\tau_e=u_0u_{-1}u_{-2}\ldots$ and
	$\tau_f=v_0v_{-1}v_{-2}\ldots$, where $u_d\in\bF_m^+$, $|u_d|=k$ and $v_d\in\bF_n^+$, $|v_d|=l$ 	satisfying 
	\[ f_{v_{d+1}}e_{u_d}=e_{u_{d+1}}f_{v_d}. \]
	Where $\V$ is given by pulling back from a basis vector $\xi$ by $\tau_e$ and $\tau_f$. In both
	cases, by replacing $k$, $l$, $a$ and $b$ by suitable multiples can assume that $k=a$ and $l=b$.

	Suppose $\eta$ is a basis vector in $\H$ such that
	\begin{equation*}
	S_u\ol\eta=T_v\ol\eta,
	\end{equation*}
	with $|u|=a$, $|v|=b$. Hence $\eta$ satisfies (\ref{w4}) and is not wandering. Our first goal is 	 to show that $\gamma(u)=v$. Since we have a Cuntz-type representation there is a basis vector
	$\eta'$ and blue path $u'$ of length $a$ such that $S_{u'}\ol{\eta'}=\ol\eta$. By the commutation
	relations we necessarily have that the red bath $v'$ leading into $\eta$ also comes from
	$\eta'$. That is, we have \[ S_{u'}\ol{\eta'}=T_{v'}\ol{\eta'}=\ol{\eta}. \]
	By the commutation relations and Theorem \ref{thm: periodic cond}
	\begin{align*}
		S_uT_{v'}\ol{\eta'}&=T_vS_{u'}\ol{\eta'}\AND \\
		S_uT_{v'}\ol{\eta'}&=T_{\gamma(u)}S_{\gamma^{-1}(v')}\ol\eta,
	\end{align*}
	and hence $v=\gamma(u)$.

	Now let
	$\zeta={S_u\eta}$. We will show that $\zeta$ also satisfies (\ref{w4}). Choose any
	$u'\in\bF_m^+$ with $|u'|=a$. Then by the commutation relations we have
	\begin{align*}
		S_{u'}T_{\gamma(u)}\ol\eta=T_{\gamma(u')}S_u\ol\eta.
	\end{align*}
	Hence we have $S_{u'}\ol\zeta=T_{\gamma(u')}\ol\zeta$.

	Take any basis vector $\zeta$ in $\V$. By construction there is a vector $\eta\in\V$ and
	$|u|=a$, $|v|=b$ such that 
	\[ S_u\ol\eta=T_v\ol\eta=\ol\zeta. \]
	Hence the above argument applies to $\zeta$.
	It follows that the row-isometries $[S_u: |u|=a]$ and $[T_{\gamma(u)}: |u|=a]$ are equal.

	By \cite[Theorem 3.12]{Ful}
	\[ \H=\bigvee_{d(w)=(a,b)}(ST)_w\V. \]
	Hence, since $[S_u: |u|=a]=[T_{\gamma(u)}: |u|=a]$, we also have that
	\[ \H=\bigvee_{|u|=a}S_a\V=\bigvee_{|u|=a}T_{\gamma(u)}\V. \]
	Hence if we have any basis vector $\eta$ in $\V^\perp$ then $\eta$ must satisfy (\ref{w4}), and 	hence $\eta$ is not wandering. 
\end{proof}

Whether the type 1 and type 3b(ii) representations of aperiodic $2$-graph algebras have standard basis vectors as wandering vectors remains open. We can, however, give a sufficient condition for wandering vectors to exist for type 1 representations.

\begin{prop}\label{prop: fin cor wand}
Let $(S,T)$ be an irreducible atomic representation of type 1 of $\gr$ with minimal finite dimensional coinvariant cyclic subspace $\V$. 
If there is a standard basis vector in $\V^\perp$ satisfying (\ref{w1}) or (\ref{w2}) then there is a standard basis vector which is wandering for $(S,T)$.
\end{prop}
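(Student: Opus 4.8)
The plan is to reduce to a single case, locate a pure-blue cycle lying entirely outside the finite core $\V$, and then build a wandering vector by pushing off all cycles into the tree part of the graph, with condition (\ref{w4}) as the one genuine obstacle.

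First I would invoke the symmetry interchanging the blue generators $S$ with the red generators $T$ (and correspondingly $m\leftrightarrow n$), which swaps (\ref{w1}) with (\ref{w2}) and preserves type~$1$, to reduce to the case that the hypothesis is (\ref{w1}): there is a standard basis vector $\zeta\in\V^\perp$ and a nonempty word $u\in\bF_m^+$, which I may take primitive, with $S_u\ol\zeta=\ol\zeta$. The first observation is that the entire blue cycle $C$ through $\zeta$ avoids $\V$. Indeed, $\V$ is coinvariant, hence closed under the (deterministic) backward blue edges, and since every vertex of a Cuntz-type atomic representation has a unique incoming blue edge, the backward blue orbit of any vertex of $C$ sweeps out all of $C$ and returns to $\zeta$; were $C$ to meet $\V$ we would get $\zeta\in\V$, a contradiction. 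Thus $C\subseteq\V^\perp$ is a genuine pure-blue loop detached from the core in which $\xi$ lives.

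Next I would produce a candidate wandering vector and dispose of the loop conditions. Since $m\ge 2$, choose a blue word $u'\ne u$ with $|u'|=|u|$ and set $\eta=S_{u'}\ol\zeta$; as $\V^\perp$ is $S$-invariant, $\eta\in\V^\perp$, and because within its blue component leaving the cycle lands in a loop-free tree (ring type), $\eta$ lies on no blue cycle, so (\ref{w1}) fails for $\eta$. For (\ref{w2}) and (\ref{w3}) the idea is that any red loop or mixed loop at $\eta$ can be transported, via the commutation relations together with the uniqueness of incoming blue and red edges, to a loop anchored on $C$ or in $\V$; pushing such a loop backward then forces a repetition that collapses $\eta$ into the cyclic core, contradicting $\eta\in\V^\perp$ and $\eta\notin C$ (if a single blue step does not already clear the red cycles one iterates with an analogous off-cycle red step). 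By Lemma \ref{lem: wandering conditions} these are precisely the loop conditions a priori possible for a type~$1$ representation, so after this step only (\ref{w4}) can obstruct $\eta$ from being wandering.

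The crux is therefore (\ref{w4}), the blue--red coincidence $S_a\ol\eta=T_b\ol\eta$, which is exactly the condition governed by periodicity in Proposition \ref{prop: periodic no wandering}. Here the cycle $C\subseteq\V^\perp$ is the decisive witness. If $\gr$ were periodic, the identification $[S_w:|w|=a]=[T_{\gamma(w)}:|w|=a]$ furnished by Theorem \ref{thm: periodic cond} would lock the blue and red structures together, so that by irreducibility the single $2$-graph component forces every cyclic vector, in particular all of $C$, into the backward-closed core $\V$; this contradicts $C\subseteq\V^\perp$. Hence $\gr$ is aperiodic. To finish, I would show that $\eta$ cannot satisfy (\ref{w4}) either: a coincidence $S_a\ol\eta=T_b\ol\eta$, propagated forward along the blue loop of $C$ exactly as in the proof of Proposition \ref{prop: periodic no wandering}, would reconstruct the periodic identification just excluded. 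Thus $\eta$ satisfies none of (\ref{w1})--(\ref{w4}) and is wandering. I expect controlling this propagation to be the main technical obstacle: one must show that a single coincidence at $\eta$ really does force the offending blue--red identification all the way along $C$, with lengths matching under the commutation data, so that the off-core cycle $C$ becomes a clean obstruction to (\ref{w4}).
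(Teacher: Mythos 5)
Your skeleton does match the paper's: reduce to one colour, push off the ring to the candidate $\eta=S_{u'}\ol\zeta$ with $u'\neq u$, and exclude conditions (\ref{w1})--(\ref{w4}) one at a time (the paper works under hypothesis (\ref{w2}) with candidate $\zeta'=T_{v'}\ol\zeta$, which is your picture with the colours swapped). But your handling of the crux, condition (\ref{w4}), has a genuine gap. You first argue that the hypothesis forces $\gr$ to be aperiodic, and then propose to exclude (\ref{w4}) by propagating a single coincidence $S_a\ol\eta=T_b\ol\eta$ forward ``exactly as in the proof of Proposition \ref{prop: periodic no wandering}'' so as to reconstruct the identification $[S_w:|w|=a]=[T_{\gamma(w)}:|w|=a]$ and contradict aperiodicity. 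That propagation, however, is powered by periodicity itself: its key steps, such as $S_{u'}T_{\gamma(u)}\ol\eta=T_{\gamma(u')}S_u\ol\eta$, are instances of the semigroup identity $e_{u'}f_{\gamma(u)}=f_{\gamma(u')}e_{u}$ from Theorem \ref{thm: periodic cond}, which exists only when $\gr$ \emph{is} periodic. (Note also that Theorem \ref{thm: periodic cond} is a statement about the semigroup $\gr$, not about operators; the operator identification you quote is the hard-won \emph{conclusion} of Proposition \ref{prop: periodic no wandering}, not something the theorem furnishes.) Once you have established aperiodicity there is no bijection $\gamma$ and no coherence among the commutation relations, so a single (\ref{w4}) coincidence does not propagate along $C$, and your final step collapses. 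The paper's exclusion of (\ref{w4}) makes no reference to periodicity at all: from $S_\alpha\ol{\zeta'}=T_\beta\ol{\zeta'}$ one pulls back in the opposite colour until one lands on the ring through $\zeta$, and pulling back around that ring a further $\operatorname{lcm}(q,q')/q'$ times produces a ring of the second colour at a vertex of the first ring; that vertex then satisfies both (\ref{w1}) and (\ref{w2}), which contradicts irreducibility.

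A secondary imprecision: for (\ref{w2}) and (\ref{w3}) you claim the transported loop ``collapses $\eta$ into the cyclic core, contradicting $\eta\in\V^\perp$.'' Lying on rings does not place a vertex in $\V$ --- your own $\zeta$ lies on a blue ring inside $\V^\perp$. The correct contradiction for (\ref{w2}) is irreducibility: transporting the red loop back along $u'$ (via commutation and uniqueness of incoming paths) shows that $\zeta$ satisfies both (\ref{w1}) and (\ref{w2}), so its backward orbit spans a second finite-dimensional cyclic coinvariant subspace disjoint from $\V$, impossible in an irreducible representation. And (\ref{w3}) is not handled by transport alone; the paper uses a separate counting argument (push forward until you hit the ring at $\zeta$, then go around it $\operatorname{lcm}(q,q')/q'$ times to manufacture two distinct incoming paths of the same colour and length at a single vertex). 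So your easy exclusions are repairable with the right contradiction target, but the (\ref{w4}) step must be replaced by an argument, like the paper's pull-back argument, that does not presuppose periodicity.
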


\begin{proof}
	Assume that $\zeta$ is a standard basis vector in $\V^\perp$ satisfying (\ref{w2}). Then there is a red
	ring $v$ of length $q$ such that $T_v\ol\zeta=\ol\zeta$. Let $v'$ be another path of length
	$|v|$ 	with
	$v'\neq v$ and let $\zeta'$ be the standard basis vector given by $T_{v'}\ol\zeta=\ol{\zeta'}$. We
	claim that $\zeta'$ is wandering.

	Clearly, by the uniqueness of red paths into $\zeta'$, $\zeta'$ cannot satisfy condition (\ref{w2}).
	
	Suppose that $\zeta'$ satisfies (\ref{w1}). Then there is a blue path $u'$ of length $q$ so that
	$S_{u'}\ol{\zeta'}=\ol{\zeta'}$. By the commutation relations and the uniqueness of red paths into $\zeta'$, we have
	\begin{align*}
		\zeta'&=S_{u'}\ol{\zeta'}= S_{u'}T_{v'}\ol{\zeta}\\
		&= T_{v'}S_u\ol\zeta=T_{v'}\ol\zeta.
	\end{align*}
	So $S_u\ol\zeta=\ol\zeta$. Thus $\zeta$ satisfies both  (\ref{w1}) and (\ref{w2}). 
	This contradicts the irreducibility of the representation $(S,T)$.  
	
	If $\zeta'$ satisfies (\ref{w3}), then there are blue and red paths $\alpha$ and $\beta$ of length $p$ and $q'$ respectively,  such that 
	$T_\beta S_\alpha\ol{\zeta'}=\ol{\zeta'}$.  Pushing forward in blue eventually reaches a basis vertex, say $\eta$, on the red ring $v$. 
	Pushing forward from $\eta$ in blue further ${\text{lcm}(q, q')}/{q'}$ times, we obtain another blue path of length $p$ (from a vertex on $v$)  leading into $\eta$. 
	This obviously yields a contradiction. 
	
	Finally let us suppose that $\zeta'$ satisfies (\ref{w4}). Then there are blue and red paths $\alpha$ and $\beta$ of length $p$ and $q'$ respectively,
	such that $S_\alpha\ol{\zeta'}=T_\beta\ol{\zeta'}$. This time, pulling back in blue eventually reaches a basis vertex, say $\eta$, on the red ring $v$. 
	Then we obtain a blue ring at $\eta$ by pulling back from $\eta$ in blue $\text{lcm}(q, q')/q'$ times. Hence $\eta$ satisfies both (\ref{w1}) and 
	(\ref{w2}), which yields a contradiction as above.	
	
	The case that $\zeta$ satisfies (\ref{w1}) is argued similarly. 
	\end{proof}

The following example illustrates Proposition \ref{prop: fin cor wand}.

\begin{example}\label{ex: aperiodic}
	Let $A=[A_1,A_2]$ and $B=[B_1,B_2]$ be a $1$-dimensional representation of $\bF_{\mathrm{id}}^+$ 		on $\spn\{\xi_\mt\}$, where $\xi_\mt$ is a unit vector and $\mathrm{id}$ is the identity
	representation in $S_{2\times 2}$, such that
	\begin{equation*}
		A_1\xi_\mt=\xi_\mt=B_1\xi_\mt.
	\end{equation*}
	This defines an atomic representation. Let $(S,T)$ be the minimal isometric representation of 		$(A,B)$. Then $(S,T)$ is Cuntz-type, atomic representation \cite{DPYdiln}.
	Let $\fS$ be the nonself-adjoint $2$-graph algebra generated by $(S,T)$. Let
	$\zeta=S_2T_2\xi_\mt=T_2S_2\xi_\mt$. Note that $S_1T_2\xi_\mt=T_2\xi_\mt$, thus $T_2\xi_\mt$ 		satisfies (\ref{w1}). By Proposition \ref{prop: fin cor wand} $\zeta$ is a wandering vector. Let 		$\M=\fS[\zeta]$. Then
	$\fS|_\M\cong\L_{\mathrm{id}}$. Thus
	\begin{equation*}
		\fS=\begin{bmatrix}
			\bC I&0&0\\
			*&*&0\\
			*&*&\L_{\mathrm{id}}
		\end{bmatrix}.
	\end{equation*}
\end{example}

%%%%%%%%%%%%%%%%%%%%%%%%%%%%ACKNOWLEDGEMENTS%%%%%%%%%%%%%%%%%%%%%%%%%%%%%%%%%%%%%%%
\hypersetup{bookmarksdepth=-3}\section*{}
\subsection*{Acknowledgements} The authors would like to thank Ryan Hamilton for the many fruitful conversations and ideas he shared. 
They also thank Prof. Ken Davidson and Prof. Matt Kennedy for their useful comments. 
\hypersetup{bookmarksdepth}%back to tocdepth

%%%%%%%%%%%%%%%%%%%%%%%%%%%%%%%BIBLIOGRAPHY%%%%%%%%%%%%%%%%%%%%%%%%%%%%%%%%

\end{document}